\documentclass[11pt,reqno]{amsart}
\usepackage[table]{xcolor}
\usepackage{amssymb, amsmath, amsthm, mathrsfs}
\usepackage{enumitem}
\usepackage{float}
\usepackage{array}
\usepackage{booktabs}
\usepackage{tabularx}
\usepackage{longtable}
\usepackage{multirow}
\usepackage{colortbl}
\usepackage{makecell}
\usepackage{pgfplots} 
\pgfplotsset{compat=1.18}
\usepackage{graphicx}
\usepackage{adjustbox}
\usepackage{placeins}
\usepackage{tikz}
\usepackage{float}

\usetikzlibrary{
	arrows.meta,
	positioning,
	calc,
	shapes.geometric,
	shapes.multipart,
	fit,
	matrix,
	backgrounds,
	decorations.pathreplacing,
	decorations.markings
}

\usepackage{hyperref}
\hypersetup{
	colorlinks=true,
	linkcolor=black,
	citecolor=black,
	urlcolor=blue,
	pdftitle={Finite-Order Entire Solutions to Fermat-Type Partial Differential-Difference Systems in Cn},
	pdfauthor={Sujoy Majumder, Jhilik Banerjee, Abhijit Banerjee, and Elias G. Saleeby}
}

\newtheorem*{theoA}{Theorem A}
\newtheorem*{theoB}{Theorem B}
\newtheorem*{theoC}{Theorem C}
\newtheorem*{theoD}{Theorem D}
\newtheorem*{theoE}{Theorem E}

\newtheorem*{corE}{Corollary E}

\newtheorem{theo}{Theorem}[section]
\newtheorem{lem}{Lemma}[section]

\newtheorem{exm}{Example}[section]

\newtheorem{rem}{Remark}[section]

\newcommand{\ol}{\overline}

\newcommand{\be}{\begin{equation}}
	\newcommand{\ee}{\end{equation}}

\newcommand{\beas}{\begin{eqnarray*}}
	\newcommand{\eeas}{\end{eqnarray*}}

\newcommand{\bea}{\begin{eqnarray}}
	\newcommand{\eea}{\end{eqnarray}}

\numberwithin{equation}{section}

\begin{document}

\title[Finite-Order Entire Solutions to Fermat-Type Systems]{Finite-Order Entire Solutions to Fermat-Type Partial Differential-Difference Systems in $\mathbb{C}^n$}

\date{}
\author[ S. M\MakeLowercase {ajumder}, J. B\MakeLowercase{anerjee},  A. B\MakeLowercase{anerjee} \MakeLowercase{ and } E. G. S\MakeLowercase{aleeby}]{ S\MakeLowercase {ujoy} M\MakeLowercase {ajumder}$^1$, J\MakeLowercase{hilik} B\MakeLowercase{anerjee}$^2$,  A\MakeLowercase {bhijit} B\MakeLowercase {anerjee}$^3$\MakeLowercase{ and } E\MakeLowercase{lias} G. S\MakeLowercase{aleeby}$^4$$^*$}
\address{$^{1}$Department of Mathematics, Raiganj University, Raiganj, West Bengal-733134, India.}
\email{sm05math@gmail.com; sjm@raiganjuniversity.ac.in}
\address{$^2$Department of Mathematics, University of Kalyani, West Bengal 741235, India.}
\email{jhilikbanerjee38@gmail.com; jhilikmath24@klyuniv.ac.in}
\address{$^3$Department of Mathematics, University of Kalyani, West Bengal 741235, India.}
\email{abanerjee\_kal@yahoo.co.in}
\address{$^4$Department of Mathematics, Al Akhawayn University, Ifrane, 53000, Morocco.}
\email{esaleeby@yahoo.com; e.saleeby@aui.ma}

\renewcommand{\thefootnote}{}
\footnote{2020 Mathematics Subject Classification: 39A45, 32H30, 39A14 and 35A20.}
\footnote{Key words and phrases: Several complex variables, meromorphic functions, Fermat-type
equations, Nevanlinna theory, partial differential-difference equations.}
\footnote{*Corresponding Author: Elias G. Saleeby.}

\renewcommand{\thefootnote}{\arabic{footnote}}
\setcounter{footnote}{0}

\begin{abstract} The primary objective of this paper is to determine the existence and explicit form of finite-order entire solutions in $\mathbb{C}^n$ of the following system of Fermat-type partial differential-difference equations:
\[\begin{cases}
\left(\frac{\partial f_1\left(z\right)}{\partial z_1}\right)^{n_1} + (f_2 \left(z+c\right)-f_1(z) )^{m_1}= 1,\\
		
\medskip
\left(\frac{\partial f_2\left(z\right)}{\partial z_1}\right)^{n_2} + (f_1 \left(z+c \right)-f_2(z) )^{m_2}= 1,
\end{cases}\]	
for several choices of the positive integers $n_1$, $n_2$, $m_1$, and $m_2$, where $c=(c_1,c_2,\ldots,c_n)$. We obtain structural classifications and nonexistence results in the exponent regimes specified in the main theorems, extending results of Xu et al. \cite{XLL1} from $\mathbb{C}^2$ to $\mathbb{C}^n$. Several examples illustrate the resulting solution families.
	
\end{abstract}

\thanks{Typeset by \AmS -\LaTeX}
\maketitle

\section{{\bf Introduction}}
The equation $x^n+y^m=1,$ $m,n \in \mathbb{N}$ commonly known as the \emph{Fermat equation}, occupies a central position in the development of number theory and modern mathematics. The celebrated proof of Fermat's Last Theorem by A. Wiles and R. Taylor in 1995 \cite{Taylor+Wiles_AnnMath_1995,Wiles_AnnMath_1995} established that the equation admits no nontrivial rational solutions for $m=n\geq3$, whereas infinitely many nontrivial rational solutions exist in the quadratic case $m=n=2$. This landmark result motivated extensive study of analytic analogues of Fermat-type equations in complex analysis, difference equations and differential equations.

 The functional equation $f ^m(z)+ g^n(z) = 1$ can be regarded as the Fermat-type functional equation. It has drawn considerable attention from many mathematicians in the study of Fermat-type equations. It is known that the Fermat-type equation admits no transcendental meromorphic solutions for $n=m\geq 4$ (\cite{FG2}), and no transcendental entire solutions for $n=m\geq 3$ (\cite{M1}).

First, we recall a result established by Liu et al. \cite{LCC1}:
\begin{theoA}\cite{LCC1}
The transcendental entire solutions with finite-order of
\[f'(z)^2+[f(z+c)-f(z)]^2=1,\]
must satisfy
$f(z)=\frac{1}{2}\sin(2z+Bi),$
where $c=(2k+1)\pi,$ $k$ is an integer and $B$ is a constant.
\end{theoA}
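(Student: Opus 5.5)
Since $f'(z)^2+[f(z+c)-f(z)]^2=1$ factors as $\bigl(f'+i(f(z+c)-f(z))\bigr)\bigl(f'-i(f(z+c)-f(z))\bigr)=1$, I would first turn it into an exponential parametrization. Both factors are entire and have no zeros. Since $f$ has finite order, each factor has the form $e^{p(z)}$ with $p$ a polynomial, by Hadamard factorization. Solving the two linear relations gives
\[
f'(z)=\frac{e^{p}+e^{-p}}{2}, \qquad f(z+c)-f(z)=\frac{e^{p}-e^{-p}}{2i}.
\]
If $p$ were constant, $f'$ would be constant, so $f$ would be linear. That contradicts transcendence, so $p$ is nonconstant.

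The key step is to show that $p$ is linear. Differentiate the second relation and compare it with the first relation evaluated at $z+c$ and at $z$:
\[
\frac{e^{p(z+c)}+e^{-p(z+c)}}{2}-\frac{e^{p(z)}+e^{-p(z)}}{2}=\frac{p'(z)\bigl(e^{p(z)}+e^{-p(z)}\bigr)}{2i}.
\]
Multiplying through by $e^{p(z)}$ gives an identity of the form
\[
e^{p(z+c)+p(z)}+e^{p(z)-p(z+c)}-(1-ip')e^{2p(z)}-(1+ip')=0,
\]
which has polynomial coefficients. I would then apply Borel's theorem on linear combinations of exponentials of polynomials. The constant term $1+ip'$ can only be cancelled by an exponential term whose exponent is constant. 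Since $\deg p\ge 1$, the exponent $2p$ is not constant, and $p(z+c)+p(z)$ is not constant either. The only possible partner is therefore $e^{p(z)-p(z+c)}$.

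The main obstacle is carrying out the case analysis on which exponent differences are constant. If $\deg p\ge 2$, then $p(z)-p(z+c)$ is nonconstant and the exponents $2p$ and $p(z+c)+p(z)$ differ by a nonconstant polynomial, so all the terms are mutually incomparable. Borel's theorem then forces $1+ip'\equiv0$, which is impossible for $\deg p\ge 2$. Hence $p(z)=az+b$ with $a\neq0$.

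Substituting $p(z)=az+b$, the identity becomes
\[
e^{2az+2b+ac}+e^{-ac}-(1-ia)e^{2az+2b}-(1+ia)=0.
\]
Comparing coefficients of $e^{2az}$ and of $1$ gives $e^{ac}=1-ia$ and $e^{-ac}=1+ia$. Multiplying these, $(1-ia)(1+ia)=1$, so $a^2=0$, which is a contradiction unless I have mis-signed something. So before concluding I would recheck the signs carefully.

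Handling the constants differently: write $f'=\cos\theta$ and $f(z+c)-f(z)=\sin\theta$ with $\theta=-ip$ linear. Integrating gives $f=\frac{\sin\theta}{a'}+C$, where $a'=\theta'$. The difference equation then reads $\frac{\sin(\theta+a'c)-\sin\theta}{a'}=\sin\theta$. This forces $\cos(a'c)=1+a'$ and $\sin(a'c)=0$. Hence $\cos(a'c)=\pm1$, and $a'\neq0$ gives $a'=-2$ with $a'c$ an odd multiple of $\pi$, i.e. $c=(2k+1)\pi$.

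This yields $f(z)=-\tfrac12\sin(-2z+\theta_0)+C$, which can be rewritten as $\tfrac12\sin(2z+Bi)$ up to the constant. Substituting back into the equation forces $C=0$. The sign of $a'$ is absorbed into the choice of $B$.
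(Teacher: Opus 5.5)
The paper only cites this result and gives no proof of it. Your strategy is essentially the one the paper uses for its own system in Theorem~\ref{t2.1}: parametrize $f'=\cos\theta$ and $f(z+c)-f(z)=\sin\theta$ with $\theta$ a polynomial, differentiate, and use a Borel-type argument to force $\theta$ to be linear. You should add that $c\neq0$, since $c=0$ reduces the equation to $f'^2=1$; this is what makes $p(z)-p(z+c)$ nonconstant when $\deg p\ge2$.

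The failure is at the end, where you reach the stated conclusion only through two slips. First, from $\sin(a'c)=0$, $\cos(a'c)=1+a'$ and $a'\neq0$ you correctly get $a'=-2$ and $a'c=(2k+1)\pi$. But then $-2c=(2k+1)\pi$, i.e.\ $c=-(2k+1)\pi/2$ is an odd multiple of $\pi/2$, not of $\pi$. Second, the claim that substituting back forces $C=0$ is false: neither $f'$ nor $f(z+c)-f(z)$ sees an additive constant, so $C$ is arbitrary.

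The contradiction $a^2=0$ in your exponential route is just the sign slip you suspected. The factor $\frac{p'}{i}=-ip'$ multiplies both $e^{2p}$ and $1$, so the identity is
\[
e^{p(z+c)+p(z)}+e^{p(z)-p(z+c)}-(1-ip')\bigl(e^{2p(z)}+1\bigr)=0 .
\]
For $p=az+b$, coefficient comparison gives $e^{ac}=e^{-ac}=1-ia$. Hence $(1-ia)^2=1$, so $a=-2i$ and $e^{-2ic}=-1$, which again gives $c\in\frac{\pi}{2}+\pi\mathbb{Z}$.

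So no correct argument can end where yours does, because the statement as transcribed is itself inaccurate. For $f=\frac12\sin(2z+Bi)$ and $c=(2k+1)\pi$ one has $f(z+c)\equiv f(z)$, and the equation would read $\cos^2(2z+Bi)\equiv1$, which is false. Conversely, $c=\pi/2$ with $f=\frac12\sin 2z+C$ is a transcendental finite-order solution that violates the stated restriction on $c$. The paper's own example for conclusion (2) of Theorem~\ref{t2.1}, namely $f_1=f_2=\frac12\sin(2z_1)$ with $c_1=-\pi/2$, is exactly such a solution in the variable $z_1$. Likewise, the phase condition $e^{i(P(z)-P(z+c))}=-1$ there, with $P=-2z+\mathrm{const}$, reads $e^{2ic}=-1$.

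What your method actually proves, once repaired, is: $f(z)=\frac12\sin(2z+Bi)+C$ with $B,C$ arbitrary constants and $c=(2k+1)\pi/2$. Substituting your final formula back into the equation, rather than matching it to the target, would have exposed both slips.
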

Motivated by these developments, researchers began investigating coupled systems of Fermat-type equations. Recall that the pair $(f(z),g(z))$ is called a set of finite-order transcendental entire solutions for the system

\[\left\{\begin{array}{l}
f^{n_1}(z)+ g^{m_1}(z)=1 \\
f^{n_2}(z)+ g^{m_2}(z)=1
\end{array}\right.\]
if $f(z)$ and $g(z)$ are transcendental entire functions satisfying $\rho=\max \{\rho(f), \rho(g)\}<\infty$.

In 2016, Gao \cite{L.G} addressed the problem of determining the form of solutions for the following system of Fermat-type differential-difference equations
\bea\label{e1.1}
\left\{\begin{array}{l}
{\left(f_1^{(1)}(z)\right)^2+{f_2}^2(z+c)=1} \\
{\left(f_2^{(1)}(z)\right)^2+{f_1}^2(z+c)=1}
\end{array}\right.\eea
and derived the following result:

\begin{theoB}\cite[Theorem 1.1]{L.G}
Suppose that $(f_1(z), f_2(z))$ is a pair of finite order transcendental entire solutions for the system of differential-difference equations {\em(\ref{e1.1})}. Then $(f_1(z), f_2(z))$ satisfies one of the following
\begin{enumerate}
\item[\em{(i)}] $(f_1(z), f_2(z))=\left(\sin (z-b\iota), \sin (z-b_1 \iota)\right)$,
\item[\em{(ii)}] $(f_1(z), f_2(z))=\left(\sin (z+b\iota), \sin (z+b_1 \iota)\right)$,
\end{enumerate}
where $b, b_1$ are constants, and $c=k \pi$, $k$ is an integer.
\end{theoB}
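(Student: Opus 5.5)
Since both equations in (\ref{e1.1}) are sums of two squares equal to $1$, I would parametrize each by an exponential. Put $u_j=f_j'$ and write
\[
\bigl(f_1'(z)+\iota f_2(z+c)\bigr)\bigl(f_1'(z)-\iota f_2(z+c)\bigr)=1 .
\]
For a finite-order entire solution, the first factor is a zero-free entire function of finite order, so it equals $e^{p(z)}$ with $p$ a polynomial by Hadamard. This gives
\[
f_1'(z)=\tfrac12\bigl(e^{p}+e^{-p}\bigr),\qquad f_2(z+c)=\tfrac1{2\iota}\bigl(e^{p}-e^{-p}\bigr),
\]
and, from the second equation with a polynomial $q$,
\[
f_2'(z)=\tfrac12\bigl(e^{q}+e^{-q}\bigr),\qquad f_1(z+c)=\tfrac1{2\iota}\bigl(e^{q}-e^{-q}\bigr).
\]

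Next I would eliminate $f_1$ and $f_2$. Shifting the formula for $f_1'$ by $c$ and differentiating the formula for $f_1(z+c)$ gives
\[
\tfrac12\bigl(e^{p(z+c)}+e^{-p(z+c)}\bigr)=\tfrac{q'(z)}{2\iota}\bigl(e^{q(z)}+e^{-q(z)}\bigr),
\]
and there is a symmetric identity with $p$ and $q$ exchanged. This is an identity of the form $\sum a_k e^{g_k}=0$ with polynomial coefficients. Borel's lemma, applied in the usual Nevanlinna-theoretic form for exponential polynomials, forces a pairing of exponents:
\[
p(z+c)=\pm q(z)+\text{const},\quad\text{or}\quad p(z+c)-q(z)\ \text{or}\ p(z+c)+q(z)\ \text{constant}.
\]
Degree comparison then shows that $q'$ must be constant; the same holds for $p'$. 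Hence $p=az+b_0$ and $q=a'z+b_0'$.

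Matching the coefficients of $e^{q}$ and $e^{-q}$ gives $a'^2=-1$. Choosing branches with $p=\iota z+\beta$, $q=\pm\iota z+\beta'$, the relations become $e^{\iota c+\beta}=\iota e^{\beta'}\cdot(\pm1)$ and the analogous relation from the other equation. Integrating $f_1'$ yields $f_1=\sin(z\mp b\iota)$ up to an additive constant. Substituting into $f_1(z+c)=\frac{1}{2\iota}(e^{q}-e^{-q})$ kills that constant and gives $e^{2\iota c}=1$, that is, $c=k\pi$. Matching the signs in both equations produces the two alternatives (i) and (ii), with the parameters $b,b_1$ coming from $\beta$ and $\beta'$.

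The main obstacle is the Borel step. One must rule out the degenerate case where $p$ or $q$ is constant, which would make $f_j$ polynomial and contradict transcendence. One must also rule out cross-cancellations such as $p(z+c)=-q(z)$ combined with nonconstant $q'$. I would handle both by comparing degrees and leading coefficients of the polynomials, using that $e^{p(z+c)-q(z)}$ equals a rational function only when the exponent is constant.
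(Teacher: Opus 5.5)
Your reduction is sound up to the point where $p$ and $q$ become linear. Hadamard factorization, the identity $\cosh p(z+c)=-\iota q'(z)\cosh q(z)$ and its mirror, and Borel's lemma give $p'=\pm\iota$ and $q'=\pm\iota$. Borel is used twice here: first to pair exponents, then on $(e^{\gamma}+\iota q')e^{q}+(e^{-\gamma}+\iota q')e^{-q}=0$. It is this second application, not a degree comparison, that forces $q'$ to be constant. This is the mechanism the paper uses for Theorem~\ref{t2.1} (Theorem D, differentiation, Lemma~\ref{L.6}); the paper itself does not prove Theorem B, it only quotes it from \cite{L.G}.

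The gap is the final step. Substituting $f_1$ into $f_1(z+c)=\frac{1}{2\iota}(e^{q}-e^{-q})$ cannot give a condition on $c$ alone; it only ties $c$ to the phases. With $p=\iota z+\beta$ you get $e^{\iota c+\beta}=e^{\beta'}$ if $q=\iota z+\beta'$, and $e^{\iota c+\beta}=-e^{-\beta'}$ if $q=-\iota z+\beta'$ (there is no factor $\iota$). To eliminate $\beta,\beta'$ you need the second chain, which gives $e^{\iota c+\beta'}=e^{\beta}$, resp.\ $e^{\iota c-\beta'}=e^{\beta}$. Multiplying, the equal-sign branch yields $e^{2\iota c}=1$, but the mixed branch yields $e^{2\iota c}=-1$. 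Nothing in your argument excludes the mixed branch.

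It cannot be excluded, because it occurs. Take $c=\pi/2$ and $(f_1,f_2)=(\cos z,\sin z)$, so that $p=\iota z+\iota\pi/2$ and $q=-\iota z$. Then
\[
(f_1'(z))^2+f_2^2(z+c)=\sin^2 z+\cos^2 z=1,\qquad (f_2'(z))^2+f_1^2(z+c)=\cos^2 z+\sin^2 z=1.
\]
This pair has form (i) with $b=\iota\pi/2$ and $b_1=0$, yet $c\notin\pi\mathbb{Z}$. So the restriction $c=k\pi$ in the statement as quoted is false, and no proof of it can be completed.

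What your method does establish is $f_1=\sin(z+\alpha_1)$ and $f_2=\sin(z+\alpha_2)$ with $c+(\alpha_2-\alpha_1)\in\pi\mathbb{Z}$ and $c-(\alpha_2-\alpha_1)\in\pi\mathbb{Z}$. Hence $c\in\frac{\pi}{2}\mathbb{Z}$, with $f_2=\pm f_1$ when $c\in\pi\mathbb{Z}$ and $f_2=\pm f_1'$ when $c\in\frac{\pi}{2}+\pi\mathbb{Z}$. Two further remarks on the statement: (i) and (ii) are the same family once $b,b_1$ range over $\mathbb{C}$, and $b,b_1$ are not free, since they must satisfy $c\pm(b-b_1)\iota\in\pi\mathbb{Z}$.
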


\smallskip
We define $\mathbb{Z}_+=\mathbb{Z}[0,+\infty)=\{n\in \mathbb{Z}: 0\leq n<+\infty\}$ and $\mathbb{Z}^+=\mathbb{Z}(0,+\infty)=\{n\in \mathbb{Z}: 0<n<+\infty\}$. On $\mathbb{C}^n$, we define
\[\displaystyle \partial_{z_i}(f(z))=\frac{\partial f(z)}{\partial z_i},\ldots,\partial^{l_i}_{z_i}(f(z))=\frac{\partial^{l_i} f(z)}{\partial z_i^{l_i}},\;\;\partial^{I}(f(z))=\frac{\partial^{|I|}f(z)}{\partial z_1^{i_1}\cdots \partial z_n^{i_n}}\]
where $l_i\in \mathbb{Z}_+$, $i=1,2,\ldots,n$ and $I=(i_1,i_2,\ldots,i_n)\in\mathbb{Z}^n_+$ such that $|I|=\sum_{j=1}^n i_j$.

\smallskip
In recent years, Fermat-type equations involving difference operators and partial differential operators have attracted increasing attention due to their intrinsic analytical complexity and close connection with multidimensional Nevanlinna theory. Numerous contributions in this direction can be found in \cite{Haldar_Mediterian_2023,L2,L1,MSP,GS1,XC1,XC2,XLL1,XW1,XH,XuHaldar2024,HaldarBanerjeParvin2026,XuMajumderPramanik2025}. These investigations rely heavily on higher-dimensional Nevanlinna theory and the value distribution of meromorphic functions in several complex variables; see \cite{TBC1,CK1,CX1,K1}. The correction \cite{XC2} should be read together with the original paper \cite{XC1}.

Particularly noteworthy is the work of Haldar \cite{Haldar_Mediterian_2023}, who considered the Fermat-type partial differential-difference equation \bea\label{e1.2} (\partial_{z_1}(f(z_1,z_2)))^2+(f(z_1+c_1,z_2+c_2)-f(z_1,z_2))^2=1,\eea and proved the following theorem:
\begin{theoC} \cite[Theorem 1.4]{Haldar_Mediterian_2023}
	Let \(c=(c_1,c_2)\) be a constant in \(\mathbb{C}^2\). Then any transcendental entire
	solution with finite order of the Fermat-type partial differential-difference equation
	(\ref{e1.2}) is of the form
	\[	f(z_1,z_2)	=-\frac{1}{4i}\left(\beta_1 e^{\alpha\left(-\frac{2i}{\alpha}z_1+Bz_2+h(z_2)\right)}-\beta_2e^{-\alpha\left(-\frac{2i}{\alpha}z_1+Bz_2+h(z_2)\right)}\right),\]
	where \(B\), \(\alpha\), \(\beta_1\), \(\beta_2\) are complex constants with
	\(\beta_1\beta_2=1\), and \(h\) is a polynomial in \(z_2\) only satisfying the relation
	\[h(z_2)-h(z_2+c_2)=-\frac{2ic_1}{\alpha}+	Bc_2+\frac{1}{\alpha}\log(-1).\]
	
	In the special case when \(c_2\neq 0\), \(h\) must be constant and the solution is of the form
	\[f(z_1,z_2)=-\frac{1}{4i}\left(\beta_1 e^{\alpha\left(-\frac{2i}{\alpha}z_1+Bz_2+C\right)}	-\beta_2 e^{-\alpha\left(-\frac{2i}{\alpha}z_1+Bz_2+C\right)}\right),	\]
	where \(B\) and \(C\) are complex constants such that
	\[	e^{2ic_1-\alpha Bc_2}=-1.	\]
\end{theoC}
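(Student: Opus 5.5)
The statement to prove is Theorem C: finite-order transcendental entire solutions of (\ref{e1.2}) in $\mathbb{C}^2$. The plan is to parametrize the Fermat identity by a single exponential and then use the structure of finite-order exponentials in two variables to pin down the phase. Writing
\[
\partial_{z_1}f-\iota\,(f(z+c)-f(z))\qquad\text{and}\qquad \partial_{z_1}f+\iota\,(f(z+c)-f(z)),
\]
equation (\ref{e1.2}) says their product is $1$. Both factors are entire and zero-free. Since $f$ has finite order, a Hadamard-type theorem in $\mathbb{C}^2$ gives a polynomial $p(z_1,z_2)$ with
\[
\partial_{z_1}f+\iota\,\Delta_c f=e^{p},\qquad \partial_{z_1}f-\iota\,\Delta_c f=e^{-p},
\qquad\text{where } \Delta_c f=f(z+c)-f(z).
\]
This yields
\[
\partial_{z_1}f=\tfrac12\left(e^{p}+e^{-p}\right),\qquad \Delta_c f=\tfrac{1}{2\iota}\left(e^{p}-e^{-p}\right).
\]
Because $f$ is transcendental, $p$ is nonconstant.

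Next I eliminate $f$. Applying $\Delta_c$ to the first relation and $\partial_{z_1}$ to the second, and using that these operators commute, gives
\[
\iota\left(e^{p(z+c)}+e^{-p(z+c)}-e^{p}-e^{-p}\right)=\partial_{z_1}p\,\left(e^{p}+e^{-p}\right).
\]
Dividing by $e^{p}$ produces an identity of the form
\[
\iota e^{p(z+c)-p(z)}+\iota e^{-p(z+c)-p(z)}-\left(\iota+\partial_{z_1}p\right)-\left(\iota-\partial_{z_1}p\right)e^{-2p}\equiv 0 .
\]
All coefficients here are polynomials. I would apply Borel's theorem, in the form for exponential polynomials in several variables, to this identity. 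Since $p$ is nonconstant, $e^{-2p}$ is not small. Matching exponents then forces the following.
\begin{enumerate}
\item[(a)] $p(z+c)-p(z)$ must be constant, say $\log(-1)$ up to a sign choice, and $\partial_{z_1}p$ must be constant. The alternative pairings would force $\iota\pm\partial_{z_1}p\equiv 0$ together with a contradictory exponent cancellation.
\item[(b)] Working through the pairings, the surviving case is $\partial_{z_1}p=-2\iota$, hence $p=-2\iota z_1+\alpha(z_2)$, together with $e^{p(z+c)-p(z)}=-1$.
\end{enumerate}
The opposite choice $\partial_{z_1}p=2\iota$ is absorbed by swapping $p\leftrightarrow -p$, which corresponds to $\beta_1\leftrightarrow\beta_2$. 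Writing $\alpha(z_2)$ in the paper's normalization, $\alpha\left(Bz_2+h(z_2)\right)$ after rescaling, and recovering $f$ from $\Delta_c f$ (equivalently, by integrating $\partial_{z_1}f$ in $z_1$ and fixing the $z_2$-dependent constant with the difference equation) gives the displayed form. The constraint $e^{p(z+c)-p(z)}=-1$ becomes the stated relation for $h$. The relation $\beta_1\beta_2=1$ comes from the factorization, since the two exponentials multiply to $1$.

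For the special case $c_2\neq0$, the relation $h(z_2+c_2)-h(z_2)=\text{const}$ with $h$ a polynomial forces $\deg h\le 1$. The linear part can be absorbed into $Bz_2$, so $h$ is constant and the condition reduces to $e^{2\iota c_1-\alpha Bc_2}=-1$.

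I expect the main obstacle to be the Borel step: the case analysis over which exponential terms can combine in several variables, and in particular excluding $p(z+c)-p(z)$ nonconstant. I would first try comparing degrees of $p(z+c)-p(z)$ and $2p$, using that $\deg\left(p(z+c)-p(z)\right)<\deg p$.
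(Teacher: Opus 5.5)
There is a genuine gap, and it cannot be closed. The paper only quotes Theorem C from Haldar and gives no proof of it. More importantly, the statement as quoted is false, and it fails exactly at the steps you pass over.

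\textbf{The eliminated identity and step (b).} Dividing $\iota\bigl(e^{p(z+c)}+e^{-p(z+c)}-e^{p}-e^{-p}\bigr)=\partial_{z_1}p\,(e^{p}+e^{-p})$ by $e^{p}$ gives $-(\iota+\partial_{z_1}p)$ as the coefficient of $e^{-2p}$, not $-(\iota-\partial_{z_1}p)$. Your version is violated by the genuine solution $f=\frac12\sin 2z_1$, $c_1=\frac{\pi}{2}$. With the correct identity, Borel's theorem and $\deg\bigl(p(z+c)-p(z)\bigr)<\deg p$ do force $e^{p(z+c)-p(z)}\equiv\lambda$ to be constant. Comparing the coefficients of $1$ and $e^{-2p}$ then gives $\partial_{z_1}p=\iota(\lambda-1)=\iota(\lambda^{-1}-1)$, so $\lambda=\pm1$. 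The branch $\lambda=-1$ is your $\partial_{z_1}p=-2\iota$. But nothing rules out the branch $\lambda=1$, $\partial_{z_1}p\equiv0$. In fact your sign-wrong identity would force this branch alone, so your (b) does not follow from what you wrote. This branch carries genuine solutions. With $P(z_2)=-\iota p(z_2)$ one gets $f=z_1\cos P(z_2)+\phi(z_2)$, where $\phi(z_2+c_2)-\phi(z_2)=\sin P(z_2)-c_1\cos P(z_2)$. For example, take $f=z_1\cos z_2+\frac{z_2}{2\pi}\sin z_2$ with $c=(0,2\pi)$. This is the paper's own example for conclusion (3) of Theorem~\ref{t2.1} with $f_1=f_2$, and $f_1=f_2$ reduces that system to (\ref{e1.2}). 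Here $\partial_{z_1}f$ does not depend on $z_1$, so $f$ is not of the displayed form, even though $c_2\neq0$.

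\textbf{Two one-variable facts that fail in $\mathbb{C}^2$.} First, the claim ``$f$ transcendental implies $p$ nonconstant'' is false. For $c_2\neq0$, $f=z_2/c_2+e^{2\pi\iota z_2/c_2}$ is transcendental of finite order with $\partial_{z_1}f\equiv0$ and $\Delta_cf\equiv1$; this is the regime of Theorem~\ref{t2.0}. Second, in the branch $\lambda=-1$, integrating in $z_1$ gives $f=-\frac{1}{4\iota}(e^{p}-e^{-p})+\phi(z_2)$. The difference equation then yields only $\phi(z_2+c_2)=\phi(z_2)$, which does not remove $\phi$. For instance, $f=\frac12\sin 2z_1+e^{2\pi\iota z_2/c_2}$ with $c=(\frac{\pi}{2},c_2)$, $c_2\neq0$, solves (\ref{e1.2}) and contradicts even the special case of Theorem C.

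\textbf{What your method actually proves.} It gives a three-branch classification:
\begin{enumerate}
\item[(i)] $p$ constant;
\item[(ii)] $\partial_{z_1}p\equiv0$ with $e^{p(z+c)-p(z)}=1$;
\item[(iii)] $\partial_{z_1}p\equiv-2\iota$ with $e^{p(z+c)-p(z)}=-1$, plus an additive $c_2$-periodic function of $z_2$.
\end{enumerate}
The quoted form is only branch (iii) with $\phi\equiv0$. Your degree argument for $c_2\neq0$ is correct within that branch.
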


The following theorem describes the entire and meromorphic solutions in $\mathbb{C}^n$ of the Fermat-type Eq.
\bea\label{FT1} f^m+g^m=1,\;m>1.\eea

\begin{theoD} \cite[Theorem 1.3]{GS2} For $h:\mathbb{C}^n \to \mathbb{C}$ entire, the solutions of the Eq. {\em(\ref{FT1})} are characterized as follows:
\begin{enumerate}
\item[\em{(a)}] for $m=2$, the entire solutions are $f=\cos(h)$ and $g=\sin(h)$;
\item[\em{(b)}] for $m>2$, there are no non-constant entire solutions;
\item[\em{(c)}] for $m=2$, the meromorphic solutions are of the form
$f=\frac{1 - \beta^2}{1 + \beta^2}$ and $g=\frac{2\beta}{1 + \beta^2}$,
with $\beta$ being meromorphic on $\mathbb{C}^n$;
\item[\em{(d)}] for $m=3$, the meromorphic solutions are of the form
\[f = \frac{1}{2\wp(h)}\left( 1 + \frac{\wp^{(1)}(h)}{\sqrt{3}}\right)\;\;\text{and}\;\;g = \frac{1}{2\wp(h)} \left(1-\frac{\wp^{(1)}(h)}{\sqrt{3}}\right);\] where $\wp(\cdot)=\wp(\cdot;g_2,g_3)$ denotes the Weierstrass elliptic function
satisfying $(\wp')^2=4\wp^3-g_2\wp-g_3$.
\item[\em{(e)}] for $m>3$, there are no non-constant meromorphic solutions.
\end{enumerate}
\end{theoD}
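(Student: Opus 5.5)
The plan is to treat the four parts of Theorem D through the geometry of the affine Fermat curve $X_m=\{x^m+y^m=1\}$. A pair $(f,g)$ solving (\ref{FT1}) is then a holomorphic (resp. meromorphic) map $z\mapsto (f(z),g(z))$ from $\mathbb{C}^n$ into $X_m$ (resp. into its projective closure $\overline{X}_m$). I first handle the entire case with $m=2$ directly, and then reduce the remaining assertions to uniformization and hyperbolicity of $\overline{X}_m$.

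For (a), factor $f^2+g^2=(f+ig)(f-ig)=1$. Both factors are then entire and zero-free on $\mathbb{C}^n$. Since $\mathbb{C}^n$ is simply connected, there is an entire $h$ with $f+ig=e^{ih}$, and hence $f-ig=e^{-ih}$. Solving the two linear equations gives $f=\cos h$ and $g=\sin h$.

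For (c), I use the rational parametrization of the conic by stereographic projection from $(-1,0)$. Put $\beta=g/(1+f)$, which is meromorphic unless $f\equiv -1$, and in that excluded case $g\equiv 0$ is a trivial solution. From $g^2=(1-f)(1+f)$ we get $\beta^2=(1-f)/(1+f)$. Inverting this gives $f=(1-\beta^2)/(1+\beta^2)$, and then $g=\beta(1+f)=2\beta/(1+\beta^2)$. Conversely, every meromorphic $\beta$ yields a solution by direct substitution.

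For the nonexistence statements (b) and (e), I reduce to one variable. If $f$ is nonconstant, then for a generic complex line $L(t)=a+tb$ the restriction $f\circ L$ is nonconstant. Then $(f\circ L,g\circ L)$ is a nonconstant holomorphic (resp. meromorphic) map from $\mathbb{C}$ into $X_m$ (resp. $\overline{X}_m$).
\begin{itemize}
\item For $m>3$ the compact curve $\overline{X}_m$ is smooth of genus $(m-1)(m-2)/2\ge 3$, so by Picard's theorem for curves of genus $\ge 2$ (equivalently, the classical one-variable results of Gross and Baker) every such map is constant. This proves (e).
\item For $m\ge 3$ in the entire case, $X_m$ is $\overline{X}_m$ with the $m\ge 3$ points at infinity removed. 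A curve of genus $\ge 1$ minus at least one point is hyperbolic, so again the map is constant. This proves (b); for $m=2$ the curve has genus $0$ minus two points, which is exactly why (a) allows nonconstant solutions.
\end{itemize}

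Part (d) is the main obstacle, since it requires lifting a several-variable meromorphic map through an elliptic uniformization. For $m=3$, $\overline{X}_3$ is an elliptic curve $\mathbb{C}/\Lambda$. I would first argue that the meromorphic map $F=(f,g):\mathbb{C}^n\to\overline{X}_3$ is actually holomorphic. Its indeterminacy set has codimension $\ge 2$, and resolving it by blow-ups would force the image of an exceptional $\mathbb{P}^1$ to be a rational curve in $\overline{X}_3$. A genus-one curve contains no rational curve, so that image is a point and $F$ extends holomorphically. Since $\mathbb{C}^n$ is simply connected, $F$ then lifts through the covering $\mathbb{C}\to\mathbb{C}/\Lambda$ to an entire function $h$. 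Finally, I would verify by direct substitution that the standard Weierstrass parametrization of $x^3+y^3=1$ has the stated form,
\[
x=\frac{1}{2\wp}\left(1+\frac{\wp'}{\sqrt3}\right),\qquad y=\frac{1}{2\wp}\left(1-\frac{\wp'}{\sqrt3}\right),
\]
for a suitable equianharmonic lattice with $g_2=0$ (up to normalizing $g_3$, which I would fix from the substitution). Composing with $h$ gives (d). The delicate points I expect are the extension/lifting step in several variables and the correct normalization of $g_3$.
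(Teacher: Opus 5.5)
The paper contains no proof of Theorem D to compare with. It quotes the result from \cite{GS2}, and the only part it uses later is (a), applied with $m=2$ to entire pairs in (\ref{T1.1}), (\ref{lm.9}), (\ref{lnm.11}) and (\ref{md.1}). Taken on its own terms, your argument through the Fermat curve is correct and self-contained.

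Parts (a) and (c) are the standard factorization and stereographic-projection arguments. In (c), the constant solution $(f,g)=(-1,0)$ corresponds to $\beta\equiv\infty$ and is not literally of the displayed form, so the cited statement has to be read modulo it. For (b) and (e), restricting to a generic line is sound: a meromorphic pair on a line is automatically a holomorphic map into $\overline{X}_m$. Hyperbolicity then finishes both parts, using $X_m$ for $m\ge 3$ and the genus $\ge 3$ curve $\overline{X}_m$ for $m\ge 4$.

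For (d) there are three points to tighten.

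\emph{The extension step.} It can be done without resolving indeterminacy. The indeterminacy set $I$ of $F=[f:g:1]$ has codimension $\ge 2$, so $\mathbb{C}^n\setminus I$ is simply connected. Lift $F|_{\mathbb{C}^n\setminus I}$ through the universal covering $\mathbb{C}\to\overline{X}_3$ to a holomorphic $\tilde h$, then extend $\tilde h$ across $I$ by Hartogs' theorem. This merges your extension and lifting steps into one. Your blow-up version is right in substance, but as phrased it is not enough that one exceptional $\mathbb{P}^1$ is contracted. You need the whole fibre over an indeterminacy point to be connected and chain-connected by rational curves, so that its image is a single point.

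\emph{The normalization.} It is forced. For the displayed $x,y$ one has $x+y=1/\wp$ and $x-y=\wp'/(\sqrt3\,\wp)$, hence
\[
x^3+y^3=(x+y)\,\frac{(x+y)^2+3(x-y)^2}{4}=\frac{1+(\wp')^2}{4\wp^3}.
\]
So the parametrization solves the equation exactly when $g_2=0$ and $g_3=1$. The cited statement leaves this restriction implicit.

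\emph{Degree one.} You do not need to check that $z\mapsto(x(z),y(z))$ has degree one on $\mathbb{C}/\Lambda$. A nonconstant map between genus-one curves is unramified, so the composite $\mathbb{C}\to\overline{X}_3$ is a universal covering either way. Degree one also follows directly from $\wp=1/(x+y)$ and $\wp'=\sqrt3\,(x-y)/(x+y)$.
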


\smallskip
Recently Xu et al. \cite{XLL1} studied the existence and the forms of the finite order transcendental entire solutions to the following system of Fermat-type partial differential-difference equations:
\bea\label{pdds1.4}
\left\{
\begin{aligned}
\left(\partial_{z_1} f_1(z_1,z_2)\right)^2
+\left[f_2(z_1+c_1,z_2+c_2)-f_1(z_1,z_2)\right]^2 &=1,\\
\left(\partial_{z_1} f_2(z_1,z_2)\right)^2
+\left[f_1(z_1+c_1,z_2+c_2)-f_2(z_1,z_2)\right]^2 &=1.
\end{aligned}
\right.\eea
\smallskip
\begin{theoE} \cite{XLL1} Let $c =(c_1, c_2)$ be a constant in $\mathbb{C}^2$. Let $(f_1, f_2)$ be a pair of transcendental entire solutions with finite order for the system of Fermat-type partial differential-difference equations {\em(\ref{pdds1.4})}.
\begin{enumerate}
\item[\em{(i)}] If $\frac{\partial f_1}{\partial z_1}$ or 
$\frac{\partial f_2}{\partial z_1}$ is a constant, then 
$f_1,f_2$ have the following forms
\[(f_1(z),f_2(z))=\left(G(e^{\delta_1(z_2)},\ldots,e^{\delta_q(z_2)})+L_1(z)+D_1,\,
\eta G(e^{\delta_1(z_2)},\ldots,e^{\delta_q(z_2)})+L_1(z)+D_2\right),\]
where
$G(x_1,\ldots,x_q)=\mu_1x_1+\cdots+\mu_qx_q,$ $\delta_j(z_2)=\theta_j z_2,$
$\mu_j,\theta_j\;(j=1,\ldots,q)$ are constants in $\mathbb C$,
$L_1(z)=b_1z_1+b_2z_2,$
$D_1,D_2$ satisfy
$b_1^2+(b_1c_1+b_2c_2)^2=1,\; D_1=D_2,$
or
$b_1^2+(D_1-D_2)^2=1,\;\; L_1(c)=b_1c_1+b_2c_2=0,$
and $c_2,\theta_j\;(j=1,\ldots,q),\eta$ satisfy one of the following
cases:
\begin{enumerate}
\item[\em{(i$_1$)}] $\eta=1, \theta_j c_2=2k\pi i,\quad j=1,2,\ldots,q;$
\item[\em{(i$_2$)}] $\eta=-1, \theta_j c_2=(2k+1)\pi i,\quad j=1,2,\ldots,q.$
\end{enumerate}
		
\item[\em{(ii)}] If $\frac{\partial f_1}{\partial z_1}$ or 
$\frac{\partial f_2}{\partial z_1}$ is not a constant, then
$f_1,f_2$ have the following forms
\[(f_1(z),f_2(z))=(f(z)+D_1,\eta f(z)+D_2),\]
where
\[f(z)=A_{11}e^{L(z)}+A_{12}e^{-L(z)}=A_{11}e^{a_1z_1+a_2z_2}+A_{12}e^{-(a_1z_1+a_2z_2)},		\]
and $a_1,a_2,c,\eta,A_{11},A_{12},D_1,D_2$ are constants
satisfying one of the following cases:
\begin{enumerate}
\item[\em{(ii$_1$)}]
$a_1=-2\iota,\;\; 16A_{11}A_{12}=1,\;\; L(c)=2k\pi i,\;\;	\eta=-1,\;\; D_1=D_2;$
\item[\em{(ii$_2$)}]
$a_1=-2\iota,\;\; 16A_{11}A_{12}=1,\;\; L(c)=(2k+1)\pi i,\;\;	\eta=1,\;\; D_1=D_2.$
\end{enumerate}
\end{enumerate}
\end{theoE}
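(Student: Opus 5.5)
The plan is to reduce each equation of (\ref{pdds1.4}) to the parametrization in Theorem D(a), show that the phases are polynomials, and then use a Borel-type lemma on exponential polynomials to force the phases to be linear. Write $z=(z_1,z_2)$. By Theorem D(a) (with $m=2$, $n=2$) there are entire functions $p_1,p_2$ on $\mathbb{C}^2$ with
\[
\partial_{z_1}f_1=\cos p_1,\quad f_2(z+c)-f_1(z)=\sin p_1,\qquad \partial_{z_1}f_2=\cos p_2,\quad f_1(z+c)-f_2(z)=\sin p_2 .
\]
Since $f_1,f_2$ have finite order, so do $\cos p_j$ and $e^{\pm i p_j}$. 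The standard lemma in several variables (a Pólya-type lemma: if $e^{g}$ has finite order with $g$ entire, then $g$ is a polynomial of degree at most the order) then shows that $p_1,p_2$ are polynomials.

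Next I differentiate the two difference relations with respect to $z_1$. Because $\partial_{z_1}$ commutes with the shift, this gives
\[
\cos p_2(z+c)-\cos p_1(z)=\cos p_1(z)\,\partial_{z_1}p_1(z),\qquad
\cos p_1(z+c)-\cos p_2(z)=\cos p_2(z)\,\partial_{z_1}p_2(z).
\]
I expand each cosine as $\tfrac12(e^{ip}+e^{-ip})$. This turns each relation into a vanishing sum of exponentials of polynomials with polynomial coefficients. The tool is the Borel lemma in $\mathbb{C}^n$: if $\sum a_j e^{g_j}\equiv0$ with polynomial $a_j\not\equiv0$, then some $g_j-g_k$ must be constant. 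The case split is governed by whether $\partial_{z_1}p_1$ (equivalently, whether $\partial_{z_1}f_1$ or $\partial_{z_1}f_2$ is constant) vanishes identically.

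\emph{Case (i).} If $\partial_{z_1}f_1$ is constant, then $\cos p_1$ is constant, hence $p_1$ is constant. The first displayed relation then forces $\cos p_2(z+c)$ to be constant, so $p_2$ is constant as well. Thus $\partial_{z_1}f_j=b_1$, and $f_j=b_1z_1+\varphi_j(z_2)$. The differences $f_2(z+c)-f_1(z)$ and $f_1(z+c)-f_2(z)$ are constants. Subtracting the two gives a first-order linear difference system in $z_2$ for $\varphi_1,\varphi_2$ with step $c_2$. Its finite-order solutions are a linear part $b_2z_2$ plus $c_2$-periodic or antiperiodic parts, and finite order forces these to be finite exponential sums $\sum\mu_je^{\theta_jz_2}$. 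This yields the alternatives $\eta=\pm1$ with $\theta_jc_2\in 2k\pi i$ or $(2k+1)\pi i$, and inserting back gives the two constant relations $b_1^2+(L_1(c))^2=1$, $D_1=D_2$, or $b_1^2+(D_1-D_2)^2=1$, $L_1(c)=0$.

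\emph{Case (ii).} If $\partial_{z_1}p_1\not\equiv0$, the Borel lemma applied to the four exponentials $e^{\pm ip_2(z+c)}$, $e^{\pm ip_1(z)}$ forces $p_2(z+c)\mp p_1(z)$ to be constant. The same holds for the symmetric relation. The polynomial coefficient $1+\partial_{z_1}p_1$ must then be constant, which makes $p_1,p_2$ linear with equal $z_1$-coefficients. Matching the coefficients of $e^{\pm ip_1}$ pins down $\partial_{z_1}p_1$ and the sign $\eta$, and leads to $p_j=L(z)+\text{const}$. Integrating $\partial_{z_1}f_j=\cos p_j$ in $z_1$ gives $f_j=A_{11}e^{L}+A_{12}e^{-L}+\psi_j(z_2)$. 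Substituting into $f_2(z+c)-f_1(z)=\sin p_1$ kills $\psi_j$ up to the constants $D_1=D_2$, gives $a_1=-2i$ and $16A_{11}A_{12}=1$, and gives $e^{L(c)}=\mp1$ according to $\eta=\pm1$. The $\mp$ correlation comes from the sign of $\eta$: (ii$_1$) pairs $\eta=-1$ with $L(c)=2k\pi i$, and (ii$_2$) pairs $\eta=1$ with $L(c)=(2k+1)\pi i$.

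The main obstacle is the Borel-lemma bookkeeping in case (ii). Several pairings of the exponents $\pm ip_1(z)$, $\pm ip_2(z+c)$ must be excluded or reconciled with the symmetric second equation, and the degenerate subcases where some coefficient vanishes must be treated. My first approach would be to compare degrees of the polynomials $p_j$ to rule out nonlinear phases before handling the sign combinations.
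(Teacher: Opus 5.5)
You are trying to prove a statement that is false as written. The paper does not prove Theorem E; it only quotes it. The example right after Corollary E, $f_1=\frac{\cos 2z_1}{2}+Q(z_2)$, $f_2=-\frac{\cos 2z_1}{2}+Q(z_2)$ with $c=(k\pi,0)$, solves (\ref{pdds1.4}) with $\partial_{z_1}f_1$ non-constant, yet it is not of the form in (ii).

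Your argument breaks exactly there. After integrating you have $f_j=(\text{exponential part})+\psi_j(z_2)$, and you claim that substituting into $f_2(z+c)-f_1(z)=\sin p_1$ ``kills $\psi_j$ up to the constants $D_1=D_2$''. It does not. It only gives $\psi_2(z_2+c_2)=\psi_1(z_2)$ and $\psi_1(z_2+c_2)=\psi_2(z_2)$. For $c_2=0$ this just says $\psi_1=\psi_2$ is an arbitrary finite-order entire function of $z_2$. For $c_2\neq0$ it allows, for example, $\psi_1=e^{\pi i z_2/c_2}=-\psi_2$, and $(f+\psi_1,f+\psi_2)$ with $f$ as in (ii$_2$) is again a solution outside the list.

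Likewise, ``$1+\partial_{z_1}p_1$ constant makes $p_1,p_2$ linear'' fails when $c_2=0$. There $p_1=az_1+q(z_2)$ with $p_1(z+c)-p_1(z)$ constant puts no condition on the polynomial $q$. This is the periodic polynomial $Q$ in Theorem \ref{t2.1}(2),(4).

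Your case split has a second hole. You say $\partial_{z_1}p_1\equiv0$ is ``equivalent'' to $\partial_{z_1}f_1$ being constant, but $\partial_{z_1}f_1=\cos p_1$ is constant only when $p_1$ is constant. The subcase $\partial_{z_1}p_1\equiv0$ with $p_1=p_1(z_2)$ non-constant falls under the hypothesis of (ii) and is never treated.

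In that subcase your differentiated relation reduces to $\cos p_2(z+c)=\cos p_1(z)$. Integration then gives $f_j=z_1\cos p_j(z_2)+g_j(z_2)$, which are families (1) and (3) of Theorem \ref{t2.1}. For instance, $f_1=f_2=z_1\cos z_2+\frac{z_2}{2\pi}\sin z_2$ with $c=(0,2\pi)$ solves (\ref{pdds1.4}). It has $\partial_{z_1}f_1=\cos z_2$ non-constant and $c_2\neq0$, yet it is linear rather than exponential in $z_1$.

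Case (i) also rests on false steps:
\begin{enumerate}
\item Finite order does not force the periodic parts to be finite exponential sums. For $c_2\neq0$ the theta series $\sum_{n\in\mathbb{Z}}e^{-n^2}e^{2\pi i n z_2/c_2}$ is $c_2$-periodic of order $2$. For $c_2=0$, any finite-order $\varphi(z_2)$ works.
\item Periodic and antiperiodic parts can coexist, so the dichotomy $\eta=\pm1$ is not forced. An example is $f_{1,2}=z_1+e^{z_2}\pm e^{z_2/2}$ with $c=(0,2\pi i)$.
\end{enumerate}

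Your overall strategy (Theorem D, polynomial phases, a Borel-type lemma) is the one the paper uses for its own results. Carried out correctly, it yields the weaker but correct conclusions of Theorems \ref{t2.0} and \ref{t2.1}, in which the $z_2$-dependent parts are constrained only by the shift relations.
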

\begin{corE}
	
Let $(f_1,f_2)$ be any pair of transcendental entire solutions with finite order for the system of Fermat-type partial differential equations
\[	\left\{
\begin{aligned}
\left(\partial_{z_1} f_1(z_1,z_2)\right)^2
+\left[f_2(z_1,z_2)-f_1(z_1,z_2)\right]^2 &=1,\\
\left(\partial_{z_1} f_2(z_1,z_2)\right)^2
+\left[f_1(z_1,z_2)-f_2(z_1,z_2)\right]^2 &=1.
\end{aligned}
\right.\]
	
Then $f_1,f_2$ have the following forms
\begin{enumerate}
\item[\em{(i)}]
$(f_1(z),f_2(z))=(f(z)+D_1,\;f(z)+D_2),$
where
$f(z)=\mu_1e^{\theta_1 z_2}+\cdots+\mu_qe^{\theta_q z_2}+b_1z_1+b_2z_2,$
$\mu_j,\theta_j\;(j=1,\ldots,q),b_1,b_2,D_1,D_2$ are constants in $\mathbb{C}$ satisfying	$b_1^2+(D_1-D_2)^2=1;$
\item[\em{(ii)}]
$(f_1(z),f_2(z))=(f(z)+D_1,\,-f(z)+D_1),$
where
	$f(z)=	A_1e^{a_1z_1+a_2z_2}+A_2e^{-(a_1z_1+a_2z_2)},$
and $a_1,a_2,A_1,A_2,D_1$ are constants in $\mathbb{C}$
satisfying
\[
a_1=-2\iota,\qquad 16A_1A_2=1,\qquad D_1\in\mathbb{C}.
\]
\end{enumerate}
\end{corE}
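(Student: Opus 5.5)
The idea is to obtain Corollary E from Theorem E by specializing to $c=(0,0)$. Then the system in the corollary is exactly system (\ref{pdds1.4}) with $c_1=c_2=0$, so $f_2(z+c)=f_2(z)$ and $f_1(z+c)=f_1(z)$. Every finite-order transcendental entire pair $(f_1,f_2)$ must therefore have one of the forms listed in Theorem E. The plan is to go through the two cases of Theorem E and read off what the constraints become when $c=0$.

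\emph{Case (i) of Theorem E}: $\partial_{z_1}f_1$ or $\partial_{z_1}f_2$ is constant.
\begin{enumerate}
\item[(a)] With $c_2=0$ we have $\theta_j c_2=0$ for all $j$. Sub-case (i$_2$) would need $0=(2k+1)\pi i$, which is impossible. So only (i$_1$) with $\eta=1$ survives, and $\theta_j$ is arbitrary.
\item[(b)] Since $L_1(c)=b_1c_1+b_2c_2=0$ holds automatically, the two alternative constraints become:
\begin{itemize}
\item $b_1^2=1$ with $D_1=D_2$, or
\item $b_1^2+(D_1-D_2)^2=1$.
\end{itemize}
The first alternative is the special case $D_1=D_2$ of the second, so both are covered by the single condition $b_1^2+(D_1-D_2)^2=1$.
\item[(c)] Absorbing $L_1$ into $f$ then gives form (i) of the corollary.
\end{enumerate}

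\emph{Case (ii) of Theorem E}: neither partial derivative is constant.
\begin{enumerate}
\item[(a)] Here $L(c)=0$. Sub-case (ii$_2$) would need $0=(2k+1)\pi i$, so it is excluded.
\item[(b)] Sub-case (ii$_1$) holds with $k=0$. It gives $\eta=-1$, $D_1=D_2$, $a_1=-2\iota$ and $16A_{11}A_{12}=1$. This is form (ii) with $D_2=D_1$.
\end{enumerate}

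\emph{Verification.} As a sanity check, substitute both families back into the system.
\begin{itemize}
\item For form (i), $\partial_{z_1}f_j=b_1$ and $f_2-f_1=D_2-D_1$. Both equations reduce to $b_1^2+(D_1-D_2)^2=1$.
\item For form (ii), $f_2-f_1=-2f$ and $\partial_{z_1}f_1=-\partial_{z_1}f_2=-2\iota(A_1e^{L}-A_2e^{-L})$. The sum of squares is
\[-4(A_1e^L-A_2e^{-L})^2+4(A_1e^L+A_2e^{-L})^2=16A_1A_2=1.\]
\end{itemize}

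The main obstacle is only bookkeeping: checking that no case of Theorem E becomes vacuous or degenerate at $c=0$. In particular, one must confirm that Theorem E does not assume $c\neq0$, which it does not, since $c$ is an arbitrary constant in $\mathbb{C}^2$.
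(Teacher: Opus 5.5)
\textbf{There is a genuine gap: the statement is false as written, so no proof can succeed.} Your reduction is correct as bookkeeping. At $c=(0,0)$ the sub-cases (i$_2$) and (ii$_2$) of Theorem E cannot occur, and the two alternatives in (i) merge into $b_1^2+(D_1-D_2)^2=1$. But Theorem E is not a valid classification, so relying on it proves nothing. The paper does not prove Corollary E. It quotes Theorem E and Corollary E from Xu et al.\ and immediately gives an example showing that they miss solutions. Take $k=0$ in that example, so $c=(0,0)$. Then $(f_1,f_2)=(\tfrac12\cos 2z_1+Q(z_2),\,-\tfrac12\cos 2z_1+Q(z_2))$ solves exactly the corollary's system, since both equations read $\sin^2 2z_1+\cos^2 2z_1=1$. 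For non-constant $Q$ this pair is not of form (i), where $f_1-f_2$ is constant. Nor is it of form (ii), where $f_1+f_2=2D_1$.

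The structural reason is a symmetry your check should have exposed. For $c=0$ the system involves only $\partial_{z_1}f_j$ and $f_2-f_1$, so $(f_1,f_2)\mapsto(f_1+g(z_2),f_2+g(z_2))$ preserves solutions for every finite-order entire $g$. Neither listed family is stable under this map. For example, $(z_1+e^{z_2^2},\,z_1+e^{z_2^2})$ is a solution of order $2$, whereas every pair of form (i) has order at most $1$. A whole family is also missing. The pair $(z_1\cos z_2,\,z_1\cos z_2+\sin z_2)$ is the paper's illustration of conclusion (1) of Theorem~\ref{t2.1} with $n=2$, $c=(0,0)$. It has non-constant $\partial_{z_1}f_1$ and non-constant $f_1+f_2$ and $f_1-f_2$, so it fits neither form.

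Your verification only checks the converse, namely that the listed pairs are solutions. The content of the corollary is necessity, and that is exactly where it fails. A direct argument not routed through Theorem E shows what is actually true.
\begin{itemize}
\item By Theorem D, $\partial_{z_1}f_1=\cos P$ and $f_2-f_1=\sin P$, with $P$ a polynomial by finite order.
\item The second equation gives $(\partial_{z_1}f_2)^2=\cos^2P$, hence $\partial_{z_1}f_2=\pm\cos P$ globally.
\item Differentiating $f_2-f_1=\sin P$ in $z_1$ then forces $\partial_{z_1}P\equiv0$ or $\partial_{z_1}P\equiv-2$.
\end{itemize}
In the first case $f_1=z_1\cos p(z_2)+g(z_2)$ and $f_2=f_1+\sin p(z_2)$. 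In the second case $f_1=-\tfrac12\sin(-2z_1+p(z_2))+g(z_2)$ and $f_2=\tfrac12\sin(-2z_1+p(z_2))+g(z_2)$. Here $p$ is a polynomial and $g$ is an arbitrary finite-order entire function of $z_2$, subject only to $f_1,f_2$ being transcendental. Corollary E records only two special cases: $p$ constant with $g$ an exponential sum plus a linear term, and $p$ linear with $g$ constant. The obstacle is therefore not bookkeeping at $c=0$.
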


Let us consider an example that will show that the conclusion of Theorem E and Corollary E does not cover all solutions.
\begin{exm}
Let \[(f_1(z),f_2(z))=\left(\frac{\cos (2z_1)}{2}+Q(z_2), -\frac{\cos (2z_1)}{2}+Q(z_2)\right),\] where $Q(z_2)$ is a polynomial in the variable $z_2$ and $c=(c_1,c_2)=(k\pi,0)$, with $k\in\mathbb{Z}$. Then $\cos(2(z_1+c_1))=\cos(2z_1)$, and direct substitution shows that $(f_1(z),f_2(z))$ is a solution of {\em(\ref{pdds1.4})}.
\end{exm}

\smallskip
Determining the precise form of finite-order transcendental entire solutions $(f_1(z),f_2(z))$ of the system (\ref{pdds1.4}) would therefore be intriguing. Motivated by this, we further aim to extend Theorem E by considering the following system of Fermat-type partial differential-difference equations in $\mathbb{C}^n$:
\bea\label{pds1}
\begin{cases}
\left(\partial_{z_1}(f_1(z))\right)^{n_1} + (f_2(z+c) -f_1(z))^{m_1}= 1, \\

\medskip
\left(\partial_{z_1}(f_2(z))\right)^{n_2} + (f_1(z+c) -f_2(z))^{m_2} = 1,
\end{cases}
\eea
where $n_1, n_2, m_1, m_2$ are positive integers and $c=(c_1, c_2, \ldots, c_n)\in\mathbb{C}^n$.

For $n_1=n_2=m_1=m_2=1$, it is easy to verify first considering  \begin{align*}
(f_1(z),f_2(z))=\left(e^{2(z_1+z_2+\ldots+z_n)}+\frac{z_1}{1+c_1},e^{2(z_1+z_2+\ldots+z_n)}+\frac{z_1}{1+c_1}\right),
\end{align*}
 where $c_1\ne -1$ and $e^{2(c_1+c_2+\ldots+c_n)}=-1$ and next considering \begin{align*}
(f_1(z),f_2(z))=\left(e^{2(z_1+z_2+\ldots+z_n)}+z_1^2,e^{2(z_1+z_2+\ldots+z_n)}+z_1^2\right),
\end{align*}
 where $c_1=-1$  and $e^{2(c_1+c_2+\ldots+c_n)}=-1$ are the solutions of Eq. (\ref{pds1}). Therefore, we consider the Eq. (\ref{pds1}) for the existence of entire solutions for the case when $n_i+m_i>2$, where $i=1,2$.

\subsection{\bf Basic Notations in several complex variables}

Nevanlinna theory, originally developed for meromorphic functions of a single complex variable, has been extensively extended to the setting of several complex variables to investigate the value distribution of meromorphic mappings from $\mathbb{C}^n$ into complex projective spaces and complex manifolds. This theory provides deep insights into: (a) the growth and value distribution of meromorphic mappings; (b) uniqueness and rigidity phenomena for solutions of linear and nonlinear partial differential equations; and (c) criteria for compactness and convergence of families of meromorphic mappings. Its applications encompass a wide range of areas, including complex analysis, partial differential equations, complex geometry, and mathematical physics, offering both powerful analytical techniques and elegant theoretical results. The references \cite{Banerjee-Majumder-2026}, \cite{LB}, \cite{FL}, \cite{Majumder-2026}, \cite{Majumder-Das}, \cite{Majumder-Das-Pramanik-2025}, \cite{Majumder-Sarkar}, \cite{Majumder-Sarkar-2026} and \cite{Majumder-Sarkar-2027} provide a comprehensive foundation for understanding the current state of research in Nevanlinna value distribution theory in several complex variables.
 
\smallskip
On $\mathbb{C}^n$, the exterior derivative $d$ splits $d= \partial+ \bar{\partial}$ and twists to $d^c= \frac{i}{4\pi}\left(\bar{\partial}- \partial\right)$ (see \cite{HLY1,WS1}). Clearly $dd^{c}= \frac{i}{2\pi}\partial\bar{\partial}$.
An exhaustion $\tau_n$ of $\mathbb{C}^n$ is defined by $\tau_n(z)=||z||^2$. The standard Kaehler metric on $\mathbb{C}^n$ is given by $\upsilon_n=dd^c\tau_n>0$. On $\mathbb{C}^n\backslash \{0\}$, we define $\omega_n=dd^c\log \tau_n\geq 0$ and $\sigma_n=d^c\log \tau_n \wedge \omega_n^{n-1}$. For any $S\subseteq \mathbb{C}^n$, let $S[r]$, $S(r)$ and $S\langle r\rangle$ be the intersection of $S$ with
respectively the closed ball, the open ball, the sphere of radius $r>0$ centered at $0\in \mathbb{C}^n$.

\smallskip
The zero multiplicity $\mu^0_f(a)$ of an entire function $f$ in $\mathbb{C}^n$ at a point $a\in \mathbb{C}^n$ is defined
to be the order of vanishing of $f$ at $a$. If $a=(a_1,\ldots,a_n)$, then $\partial^I(f(a))=0$, where $|I|\leq \mu^0_f(a)-1$.
In other words, we can write $f(z)=\sum_{i=0}^{\infty}P_i(z-a)$, where the term $P_i(z-a)$ is either identically zero or a homogeneous polynomial of degree $i$. Certainly $\mu^0_f(a)=\min\{i:P_i(z-a)\not\equiv 0\}$.

\smallskip
Let $f$ be a meromorphic function in $\mathbb{C}^n$. Then there exist holomorphic functions $g$ and $h$ such that $hf=g$ on $\mathbb{C}^n$ and $\dim_z h^{-1}(\{0\})\cap g^{-1}(\{0\})\leq n-2$. Therefore the $c$-multiplicity of $f$ is just $\mu^c_f=\mu^0_{g-ch}$ if $c\in\mathbb{C}$ and $\mu^c_f=\mu^0_h$ if $c=\infty$. The function $\mu^c_f: \mathbb{C}^n\to \mathbb{Z}$ is nonnegative and is called the $c$-divisor of $f$.
If $f\not\equiv 0$ on each component of $\mathbb{C}^n$, then $\nu=\mu_f=\mu^0_f-\mu^{\infty}_f$ is called the divisor of $f$.

\smallskip
Let $f$, $g$ and $a$ be meromorphic functions on $\mathbb{C}^n$. Then one can find three pairs
of entire functions $f_1$ and $f_2$, $g_1$ and $g_2$, and $a_1$ and $a_2$, in which each pair is coprime
at each point in $\mathbb{C}^n$ such that $f = f_2/f_1$, $g=g_2/g_1$ and $a = a_2/a_1$.
We say that $f$ and $g$ share $a$ CM if $\mu_{a_1f_2-a_2f_1}^0=\mu_{a_1g_2-a_2g_1}^0\;(a\not\equiv \infty)$ and $\mu_{f_1}^0=\mu_{g_1}^0\;\;(a=\infty)$. Again we say that $f$ and $g$ share $a$ IM if $\mu_{a_1f_2-a_2f_1,1}^0=\mu_{a_1g_2-a_2g_1,1}^0\;(a\not\equiv \infty)$ and $\mu_{f_1,1}^0=\mu_{g_1,1}^0\;\;(a=\infty)$.

\smallskip
For $t>0$, the counting function $n_{\nu}$ is defined by
\beas n_{\nu}(t)=t^{-2(n-1)}\int_{A[t]}\nu \upsilon_n^{n-1},\eeas
where $A=\text{supp}\;\nu=\ol{\{z\in G: \nu_f(z)\neq 0\}}$.
The valence function of $\nu$ is defined by
\[N_{\nu}(r)=N_{\nu}(r,r_0)=\int_{r_0}^r n_{\nu}(t)\frac{dt}{t}\;\;(r\geq r_0).\]

\smallskip
We write $N_{\mu_f^a}(r)=N(r,a;f)$ if $a\in\mathbb{C}$ and $N_{\mu_f^a}(r)=N(r,f)$ if $a=\infty$. For $k\in\mathbb{N}$, define the truncated multiplicity functions on $\mathbb{C}^n$ by $\mu_{f,k}^a(z)=\min\{\mu_f^a(z),k\}$, $\mu_{f(k}^a(z)=\mu_f^a(z)$ if $\mu_f^a(z)\geq  k$ and $\mu_{f(k}^a(z)=0$ if $\mu_f^a(z)<k$. Also we define the truncated valence functions $N_{\nu}(r)=\ol N(r,a;f)$ if $\nu=\mu_{f,1}^a$ and $N_{\nu}(r)=N_{(k}(r,a;f)$, if $\nu=\mu_{f(k}^a$.

\smallskip
An algebraic subset $X$ of $\mathbb{C}^n$ is defined as a subset
\[X=\left\lbrace z\in\mathbb{C}^n: P_j(z)=0,\;1\leq j\leq l\right\rbrace\]
with finitely many polynomials $P_1(z),\ldots, P_l(z)$.
A divisor $\nu$ on $\mathbb{C}^n$ is said to be algebraic if $\nu$ is the zero divisor of a polynomial. In this case, the counting
function $n_{\nu}$ is bounded (see \cite{GK1,WS1}).

\smallskip
With the help of the positive logarithm function, we define the proximity function of $f$ by
\[m(r, f)=\int_{\mathbb{C}^n\langle r\rangle} \log^+ |f|\;\sigma_n \geq  0.\]

The characteristic function of $f$ is defined by $T(r,f)=m(r,f)+N(r,f)$.
We define $m(r,a;f)=m(r,f)$ if $a=\infty$ and $m(r,a;f)=m(r,1/(f-a))$ if $a\in\mathbb{C}$. Now if $a\in\mathbb{C}$, then the first main theorem becomes $m(r,a;f)+N(r,a;f)=T(r,f) + O(1)$, where $O(1)$ denotes a bounded function when $r$ is sufficiently large.
We define the order of $f$ by
\[\rho(f):=\limsup _{r \rightarrow \infty} \frac{\log T(r, f)}{\log r}.\]

\smallskip
We define the linear measure $m(E):=\int_E dt$, the logarithmic measure $l(E):=\int_{E\cap [1,\infty)} \frac{d t}{t}$ and the upper density measure
\[\ol {\text{dens}}\;E=\lim\limits_{r\rightarrow \infty}\frac{1}{r}\int_{E\cap [1,r]} dt\]
for a set $E\subset [0,\infty)$. Moreover, if $l(E)<+\infty$, resp., $m(E)<+\infty$, then $E$ is of zero upper density.
Let $S(f)=\{g:\mathbb{C}^n\to\mathbb{P}^1\;\text{meromorphic}: \parallel T(r,g)=o(T(r,f))\}$, where $\mathbb{P}^1=\mathbb{C}\cup\{\infty\}$ and  $\parallel$ indicates that the equality holds only outside a set $E$ with zero upper density measure and the element in $S(f)$ is called the small function of $f$.

\section{{\bf Main results}}

Our goal is to investigate finite-order transcendental entire solutions $(f_1(z),f_2(z))$ of the system (\ref{pds1}) for the exponent regimes specified below. We determine explicit forms in the cases covered by the hypotheses and establish nonexistence in several complementary cases. Furthermore, we extend these findings to $\mathbb{C}^n$. We now state our main results.

\begin{theo}\label{t2.0} Let $c=(c_1,c_2,\ldots,c_n)\in\mathbb{C}^n$ and let $m_1,m_2,n_1,n_2\in\mathbb{N}$ such that $m_i+n_i>2$ for $i=1,2$. The finite-order entire solutions $(f_1,f_2)$ of {\em(\ref{pds1})} for which $\frac{\partial f_1}{\partial z_1}$ is constant are precisely
\begin{align*}
	f_1(z)=az_1+g_1(z_2,\ldots,z_n),\qquad
	f_2(z)=az_1+g_2(z_2,\ldots,z_n),
\end{align*}
where $a\in\mathbb{C}$ and $g_1,g_2$ are finite-order entire functions in $\mathbb{C}^{n-1}$ satisfying
\begin{align*}
&\left(ac_1+g_2(z_2+c_2,\ldots,z_n+c_n)-g_1(z_2,\ldots,z_n)\right)^{m_1}=1-a^{n_1},\\
&\left(ac_1+g_1(z_2+c_2,\ldots,z_n+c_n)-g_2(z_2,\ldots,z_n)\right)^{m_2}=1-a^{n_2}.
\end{align*}
\end{theo}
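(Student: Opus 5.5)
The plan is to use the hypothesis $\partial_{z_1}f_1\equiv a$ to force both difference terms in (\ref{pds1}) to be constant. Once that is done, the structure of $f_1$ and $f_2$ follows by elementary integration in $z_1$, and the converse is a direct substitution. No Nevanlinna theory is needed; I expect the exponent restriction $m_i+n_i>2$ and the finite-order assumption to play no real role, beyond being carried along so that $g_1,g_2$ are of finite order.

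\emph{Step 1.} Suppose $\partial_{z_1}f_1\equiv a$. The first equation of (\ref{pds1}) gives $(f_2(z+c)-f_1(z))^{m_1}\equiv 1-a^{n_1}$. So the entire function $h_1(z):=f_2(z+c)-f_1(z)$ takes values in the finite set of $m_1$-th roots of $1-a^{n_1}$. By continuity and connectedness of $\mathbb{C}^n$, $h_1$ equals a constant $b_1$. Hence $f_2(z+c)=f_1(z)+b_1$. Differentiating in $z_1$ gives $\partial_{z_1}f_2(z+c)=a$, and replacing $z$ by $z-c$ yields $\partial_{z_1}f_2\equiv a$.

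\emph{Step 2.} Feeding $\partial_{z_1}f_2\equiv a$ into the second equation gives $(f_1(z+c)-f_2(z))^{m_2}\equiv 1-a^{n_2}$. The same connectedness argument shows that $f_1(z+c)-f_2(z)$ is a constant $b_2$.

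\emph{Step 3.} For $j=1,2$, the entire function $f_j(z)-az_1$ has identically vanishing $z_1$-derivative. Expanding it in its global power series, every monomial involving $z_1$ drops out, so $f_j(z)=az_1+g_j(z_2,\ldots,z_n)$ with $g_j$ entire on $\mathbb{C}^{n-1}$. Each $g_j$ has finite order, since it is the restriction of $f_j-az_1$ to the hyperplane $z_1=0$; alternatively one can compare characteristic functions directly. Substituting these forms into $h_1$ and $h_2$ gives
\[
f_2(z+c)-f_1(z)=ac_1+g_2(z_2+c_2,\ldots,z_n+c_n)-g_1(z_2,\ldots,z_n),
\]
and symmetrically for the second difference. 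Together with Steps 1 and 2, this yields exactly the two displayed relations of Theorem \ref{t2.0}.

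\emph{Step 4 (converse).} If $f_1,f_2$ have the stated form and $g_1,g_2$ satisfy the two relations, then $\partial_{z_1}f_j=a$. The $j$-th equation of (\ref{pds1}) then reads $a^{n_j}+(1-a^{n_j})=1$, so $(f_1,f_2)$ is a finite-order entire solution.

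The only point needing care is Step 1: a nonconstant entire function cannot satisfy $h^{m}\equiv\text{const}$. This holds even when the constant is $0$, since then $h^{m}\equiv 0$ forces $h\equiv 0$. With that in hand, the remaining steps are routine bookkeeping, so I do not anticipate a genuine obstacle.
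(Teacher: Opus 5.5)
Your proposal is correct and follows the paper's proof essentially step for step: force $f_2(z+c)-f_1(z)$ to be constant, differentiate in $z_1$ to get $\partial_{z_1}f_2\equiv a$, integrate to obtain $f_j=az_1+g_j$, substitute to get the two relations, and check the converse directly. Your Step 2 is redundant, since the second relation already follows from the substitution in Step 3, but your connectedness argument and your justification that $g_1,g_2$ have finite order fill in details the paper leaves implicit.
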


\begin{theo}\label{t2.1} Let $c=(c_1,c_2,\ldots,c_n)\in\mathbb{C}^n$ and let $m_1,m_2,n_1,n_2\in\mathbb{N}$ such that $m_i+n_i>2$ for $i=1,2$. If $n_i=m_i$ for $i=1,2$, then $n_i=m_i=2$ for $i=1,2$, and every finite-order entire solution $(f_1(z),f_2(z))$ of {\em(\ref{pds1})} such that $\frac{\partial f_1(z)}{\partial z_1}$ is non-constant has one of the following forms:
\begin{enumerate}
\item[\em{(1)}] $f_1(z)=z_1\cos P_1(z)+g_1(z_2,\ldots,z_n)$ and $f_2(z)=z_1\cos P_2(z)+g_2(z_2,\ldots,z_n)$, 
where $P_1(z)=A_{12}z_2+\ldots+A_{1n}z_n+Q(z_2,\ldots,z_n)$, $A_{1i}\in\mathbb{C}$ for $i=2,\ldots,n$ and $Q(z_2,\ldots,z_n)$ is a polynomial in $\mathbb{C}^{n-1}$ such that $Q(z_2+c_2,\ldots,z_n+c_n)=Q(z_2,\ldots,z_n)$, $P_1(z)+P_2(z)$ is a constant, and
\[\cos P_i(z+c)\equiv\cos P_j(z),\qquad i,j\in\{1,2\},\quad i\ne j.\]
Moreover, $g_1(z_2,\ldots,z_n)$ and $g_2(z_2,\ldots,z_n)$ are finite-order entire functions in $\mathbb{C}^{n-1}$ such that \begin{align}\label{T1}
	g_i(z_2+c_2,\ldots,z_n+c_n)-g_j(z_2,\ldots,z_n)=\sin P_j(z)-c_1\cos P_j(z),
\end{align}
where $i,j\in\{1,2\}$ such that $i\neq j$;
\item[\em{(2)}] $f_1(z)=-\frac{1}{2}\sin P(z)+g_1(z_2,\ldots,z_n)$ and $f_2(z)=\frac{1}{2}\sin (-P(z)+C)+ g_2(z_2,\ldots,z_n)$, 
where $P(z)=-2z_1+A_{12}z_2+\ldots+A_{1n}z_n+ Q(z_2,\ldots,z_n)$, $A_{1i},C\in\mathbb{C}$ for $i=2,\ldots,n$ and $Q(z_2,\ldots,z_n)$ is a polynomial in $\mathbb{C}^{n-1}$ such that
$Q(z_2+c_2,\ldots,z_n+c_n)=Q(z_2,\ldots,z_n)$ and  $e^{\iota(P(z)+C-P(z+c))}=-1, \; e^{\iota( -P(z)+C+P(z+c))}=-1$ and $e^{2\iota C }=1$,
$g_1(z_2,\ldots,z_n)$ and $g_2(z_2,\ldots,z_n)$ are finite-order entire functions in $\mathbb{C}^{n-1}$ such that $g_i(z_2+c_2,\ldots,z_n+c_n)\equiv  g_j(z_2,\ldots,z_n),\;\; i,j =1,2, i\ne j$;
\item[\em{(3)}] $f_1(z)=z_1\cos P_1(z)+g_1(z_2,\ldots,z_n)$ and $f_2(z)=z_1\cos P_2(z)+g_2(z_2,\ldots,z_n)$, 
where $P_1(z)=A_{12}z_2+\ldots+A_{1n}z_n+Q_1(z_2,\ldots,z_n)$ and $P_2(z)=B_{12}z_2+\ldots+B_{1n}z_n+Q_2(z_2,\ldots,z_n)$,
$A_{1i},B_{1i}\in\mathbb{C}$ for $i=2,\ldots,n$ and $Q_i(z_2,\ldots,z_n)$ is a polynomial in $\mathbb{C}^{n-1}$ such that $Q_i(z_2+2c_2,\ldots,z_n+2c_n)=Q_i(z_2,\ldots,z_n)$ for $i=1,2$, $P_1(z)-P_2(z+c)$ is a constant, and
\[\cos P_i(z+c)\equiv\cos P_j(z),\qquad i,j\in\{1,2\},\quad i\ne j.\]
Moreover, $g_1(z_2,\ldots,z_n)$ and $g_2(z_2,\ldots,z_n)$ are finite-order entire functions in $\mathbb{C}^{n-1}$ such that {\em(\ref{T1})} holds;

\item[\em{(4)}] $f_1(z)=-\frac{1}{2}\sin P_1(z)+g_1(z_2,\ldots,z_n)$ and $f_2(z)=-\frac{1}{2}\sin P_2(z)+ g_2(z_2,\ldots,z_n)$,
where $P_1(z)=-2z_1+A_{12}z_2+\ldots+A_{1n}z_n+ Q_1(z_2,\ldots,z_n)$ and $P_2(z)=-2z_1+B_{12}z_2+\ldots+B_{1n}z_n+ Q_2(z_2,\ldots,z_n)$, $A_{1i},B_{1i}\in\mathbb{C}$ for $i=2,\ldots,n$ and $Q_i(z_2,\ldots,z_n)$ is a polynomial in $\mathbb{C}^{n-1}$ such that
$Q_i(z_2+2c_2,\ldots,z_n+2c_n)=Q_i(z_2,\ldots,z_n)$, $i=1,2$, and
\[e^{\iota(P_i(z)-P_j(z+c))}=-1,\qquad i,j\in\{1,2\},\quad i\ne j.\]
The functions $g_1(z_2,\ldots,z_n)$ and $g_2(z_2,\ldots,z_n)$ are finite-order entire functions in $\mathbb{C}^{n-1}$ such that
\[g_i(z_2+c_2,\ldots,z_n+c_n)\equiv g_j(z_2,\ldots,z_n),\qquad i,j\in\{1,2\},\quad i\ne j.\]

\end{enumerate}
\end{theo}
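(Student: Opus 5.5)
Write $u_i=\partial_{z_1}f_i$ and $v_1=f_2(z+c)-f_1(z)$, $v_2=f_1(z+c)-f_2(z)$, so that (\ref{pds1}) reads $u_i^{n_i}+v_i^{m_i}=1$ with $n_i=m_i$. The first step is to show $n_i=m_i=2$. Since $u_1$ is non-constant by hypothesis, the pair $(u_1,v_1)$ is a non-constant entire solution of $f^{m}+g^{m}=1$. By Theorem D(b), this is impossible for $m_1>2$, and $m_1=1$ is excluded by $m_1+n_1>2$; hence $m_1=n_1=2$. For the second equation I would first show that $u_2$ is non-constant. If $u_2$ were constant, then $v_2$ would be constant, and hence $f_2=az_1+g_2(z_2,\dots,z_n)$. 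Then $v_1=az_1+ac_1+g_2(\cdot+c')-f_1$, and $u_1=\partial_{z_1}f_1$ combined with $u_1^2+v_1^2=1$ and $\partial_{z_1}v_1=a-u_1$ would give a first-order ODE in $z_1$, namely $v_1'=a\mp\sqrt{1-v_1^2}$. This forces $v_1$ to be a finite-order trigonometric-type function, and I would check that it contradicts $v_2$ being constant (equivalently, $f_1(z+c)-f_2$ constant makes $u_1(z+c)=a$). So $u_2$ is non-constant, and Theorem D again gives $m_2=n_2=2$.

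Next, by Theorem D(a), there are entire $h_1,h_2$ with $u_i=\cos h_i$ and $v_i=\sin h_i$. Finite order of $f_i$, combined with the standard Pólya-type lemma in $\mathbb{C}^n$ ($e^{h}$ of finite order forces $h$ to be a polynomial), shows that $h_1,h_2$ are polynomials. Differentiating $v_1$ in $z_1$ gives the key compatibility relations
\[
\partial_{z_1}h_1\cos h_1=\cos h_2(z+c)-\cos h_1,\qquad \partial_{z_1}h_2\cos h_2=\cos h_1(z+c)-\cos h_2 .
\]
I would write each cosine as $\tfrac12(e^{\iota h}+e^{-\iota h})$ and apply the Borel-type lemma: a sum of $e^{\text{polynomial}}$ terms with polynomial coefficients vanishes only if the exponents group into classes whose differences are constant and whose coefficient sums vanish.

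This leads to two cases. In the first case, $\partial_{z_1}h_1\equiv0$, so $h_1=P_1$ is independent of $z_1$. Then $\cos h_2(z+c)=\cos h_1$, and the same argument applied to the second relation gives $\partial_{z_1}h_2=0$. Integrating $u_i=\cos P_i$ in $z_1$ yields $f_i=z_1\cos P_i+g_i$. Substituting into $v_i=\sin P_i$ yields (\ref{T1}), together with the relation $\cos P_i(z+c)=\cos P_j$. The latter forces $P_2(z+c)=\pm P_1+2k\pi$. The sign $+$ iterated gives $2c$-periodicity of the non-linear parts $Q_i$ and the relation $P_1-P_2(z+c)$ constant, which is case (3). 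The sign $-$ gives $P_1+P_2$ constant with $Q$ being $c$-periodic, which is case (1); here I would use that a polynomial with $Q(z+c)=Q(z)$ or $Q(z+2c)=Q(z)$ is what remains after the linear part is split off. In the second case, $\partial_{z_1}h_1\not\equiv0$. Matching exponents forces $\partial_{z_1}h_1$ to be constant, and the coefficient equations $(\iota\partial_{z_1}h_1/2+\dots)$ force $\partial_{z_1}h_1=-2$ and $e^{\iota h_2(z+c)}=-e^{\pm\iota h_1}$. Then $f_1=-\tfrac12\sin P_1+g_1$ with $P_1=-2z_1+\dots$, and analogously for $f_2$. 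The exponential identities, together with $v_i=\sin P_i$, reduce to $g_i(z'+c')=g_j(z')$. The two sign choices give cases (2) and (4), with $C$ and $e^{2\iota C}=1$ arising when $P_2=-P_1+C$.

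I expect the main obstacle to be the Borel-lemma bookkeeping in the mixed situation, where one $h_i$ depends on $z_1$ and the other a priori does not. Such mixed cases must be ruled out by showing that the exponent $\pm\iota h_1$ can match $\pm\iota h_2(z+c)$ only when both have the same $z_1$-coefficient. The preliminary exclusion of constant $u_2$ also needs care.
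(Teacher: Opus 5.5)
Your outline follows the paper's proof. You apply Theorem D to each equation, get polynomial phases from Lemma~\ref{L.5}, and use the differentiated relations $\cos h_2(z+c)=(1+\partial_{z_1}h_1)\cos h_1$ and its twin (the paper's \eqref{T1.7}, \eqref{T1.10}). Then come a Borel-type lemma and a case split; the paper applies Lemma~\ref{L.6} to \eqref{T1.8}--\eqref{T1.9} and splits into four sign combinations, each with sub-cases $A=1$, $A\neq1$. Two of your deviations genuinely simplify things:
\begin{itemize}
\item Once $u_1,u_2$ are non-constant, Theorem D(b) gives $n_i=m_i=2$ directly, without Lemma~\ref{L.9}.
\item Matching coefficients in each relation separately gives at once $1+\partial_{z_1}h_i=e^{\iota K_i}=e^{-\iota K_i}$, hence $\partial_{z_1}h_i\in\{0,-2\}$.
\end{itemize}
Your ODE detour for $u_2$ is unnecessary; the parenthetical one-line argument is exactly the paper's. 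The ``mixed'' situation you worry about (one $h_i$ depending on $z_1$, the other not) cannot occur, since the Borel step already gives $h_2(z+c)=\pm h_1+K$.

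Two points need correcting.

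First, when $\partial_{z_1}h_1\equiv0$, the relations $\cos P_2(z+c)=\cos P_1$ and $\cos P_1(z+c)=\cos P_2$ give \emph{independent} signs: $P_2(z+c)=\epsilon_1P_1+2k_1\pi$ and $P_1(z+c)=\epsilon_2P_2+2k_2\pi$. You only treat $\epsilon_1=\epsilon_2$. If $\epsilon_1\neq\epsilon_2$, then $P_1(z+2c)+P_1(z)$ is constant, which is impossible for a non-constant polynomial; these are the paper's Cases 2 and 3. Also, for $\epsilon_1=\epsilon_2=-1$, ``$P_1+P_2$ constant'' needs both relations: $S=P_1+P_2$ satisfies $S(z+c)+S(z)\equiv$ const, hence $S$ is constant.

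Second, when $\partial_{z_1}h_1\equiv-2$, the minus sign does not produce case (2). The relation $h_2(z+c)=-h_1+K$ would give $\partial_{z_1}h_2=2\notin\{0,-2\}$, so the second relation would require $3=\pm1$. Only the plus sign survives, and it gives (4). This does not damage the theorem. Since $\frac12\sin(-P+C)=-\frac12\sin(P-C)$, family (2) is just the subfamily $P_1=P$, $P_2=P-C$ of (4), and its exponential conditions are exactly those of (4). The paper's own Sub-case 1.2 has the same defect: there $A=-1$ forces $B=\partial_{z_1}P_2+1=2-A=3$, which is incompatible with \eqref{T1.10}.
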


\begin{table}[H]
	\centering
	\setlength{\arrayrulewidth}{1.0pt}
	\renewcommand{\arraystretch}{1.12}
	\setlength{\tabcolsep}{2pt}
	\footnotesize
	
	\begin{tabular}{|c|c|c|}
		\hline
		
		\rowcolor{blue!25}
		\textbf{Case} &
		\textbf{Representation} &
		\textbf{Essential conditions}
		\\
		\hline

		
		\rowcolor{green!10}
		(1)
		&
		\begin{minipage}[c]{3.9cm}
			\centering
			$f_1=z_1\cos P_1+g_1$
			
			$f_2=z_1\cos P_2+g_2$
		\end{minipage}
		&
		\begin{minipage}[c]{9.1cm}
			\centering
			$n_i=m_i=2\;(i=1,2)$;
			
			$P_1=A_{12}z_2+\cdots+A_{1n}z_n+Q$,
			
			$Q(z+c')=Q(z')$, \quad $P_1+P_2=\text{constant}$;
			
			$\cos P_i(z+c)=\cos P_j(z)$ for $i\ne j$;
			
			$g_1,g_2$ satisfy \eqref{T1}.
		\end{minipage}
		\\
		\hline

		
		\rowcolor{yellow!15}
		(2)
		&
		\begin{minipage}[c]{3.9cm}
			\centering
			$f_1=-\frac12\sin P+g_1$
			
			$f_2=\frac12\sin(-P+C)+g_2$
		\end{minipage}
		&
		\begin{minipage}[c]{9.1cm}
			\centering
			$P=-2z_1+A_{12}z_2+\cdots+A_{1n}z_n+Q$;
			
			$Q(z+c')=Q(z')$;
			
			$e^{\iota(P+C-P(z+c))}=-1$,
			$e^{\iota(-P+C+P(z+c))}=-1$,
			$e^{2\iota C}=1$;
			
			$g_i(z+c')\equiv g_j(z')$.
		\end{minipage}
		\\
		\hline

		
		\rowcolor{orange!12}
		(3)
		&
		\begin{minipage}[c]{3.9cm}
			\centering
			$f_1=z_1\cos P_1+g_1$
			
			$f_2=z_1\cos P_2+g_2$
		\end{minipage}
		&
		\begin{minipage}[c]{9.1cm}
			\centering
			$P_1=A_{12}z_2+\cdots+A_{1n}z_n+Q_1$,
			
			$P_2=B_{12}z_2+\cdots+B_{1n}z_n+Q_2$;
			
			$Q_i(z+2c')=Q_i(z')$;
			
			$P_1(z)-P_2(z+c)=\text{constant}$;
			
			$\cos P_i(z+c)=\cos P_j(z)$ for $i\ne j$;
			
			$g_1,g_2$ satisfy \eqref{T1}.
		\end{minipage}
		\\
		\hline

		
		\rowcolor{red!12}
		(4)
		&
		\begin{minipage}[c]{3.9cm}
			\centering
			$f_1=-\frac12\sin P_1+g_1$
			
			$f_2=-\frac12\sin P_2+g_2$
		\end{minipage}
		&
		\begin{minipage}[c]{9.1cm}
			\centering
			$P_1=-2z_1+A_{12}z_2+\cdots+A_{1n}z_n+Q_1$,
			
			$P_2=-2z_1+B_{12}z_2+\cdots+B_{1n}z_n+Q_2$;
			
			$Q_i(z+2c')=Q_i(z')$;
			
			$e^{\iota(P_i(z)-P_j(z+c))}=-1$ for $i\ne j$;
			
			$g_i(z+c')\equiv g_j(z')$ for $i\ne j$.
		\end{minipage}
		\\
		\hline
		
	\end{tabular}
	
	\caption{Solution families in Theorem~\ref{t2.1}.}
	\label{tab:classification}
\end{table}

The following example explains the conclusion of Theorem \ref{t2.0}.
\begin{exm} Suppose that $(f_1(z),f_2(z))=(z_1+z_2,z_1+z_2)$ and $c=(c_1,c_2,\ldots,c_n)$ such that $c_1+c_2=0$. Here, $g_i(z_2,z_3,\ldots,z_n)=z_2$. Clearly, $\frac{\partial f_1}{\partial z_1}$ is constant and $(f_1(z),f_2(z))$ is a solution of
\begin{equation}\label{PDDE1}
\begin{cases}
(\partial_{z_1}f_1(z))^{2}+(f_2(z+c)-f_1(z))^{2}=1,\\[4pt]
(\partial_{z_1}f_2(z))^{2}+(f_1(z+c)-f_2(z))^{2}=1.
\end{cases}
\end{equation}
\end{exm}
The next example shows the actual existence of the conclusion $(1)$ of Theorem \ref{t2.1}.
\begin{exm}
Let
\[
f_1(z)=z_1\cos z_2,\qquad f_2(z)=z_1\cos z_2+\sin z_2,
\]
and let $c=(0,0,c_3,\ldots,c_n)$. Take $P_1(z)=z_2$, $P_2(z)=-z_2$, $g_1\equiv0$, and $g_2(z_2,\ldots,z_n)=\sin z_2$. Then $P_1+P_2=0$, the two relations in (\ref{T1}) hold, and
\[
\frac{\partial f_1}{\partial z_1}=\frac{\partial f_2}{\partial z_1}=\cos z_2,
\qquad f_2(z+c)-f_1(z)=\sin z_2,
\qquad f_1(z+c)-f_2(z)=-\sin z_2.
\]
Hence the pair satisfies {\em(\ref{PDDE1})} and has non-constant $z_1$-derivatives.
\end{exm}
The next example shows the actual existence of the conclusion $(2)$ of Theorem \ref{t2.1}.
\begin{exm}
Let
\[(f_1(z),f_2(z))=\left(\frac{1}{2}\sin(2z_1),\frac{1}{2}\sin(2z_1)\right).\]
Here $P(z)=-2z_1$, $C=0$, $Q\equiv g_i\equiv0$, and
$c=\left(-\frac{\pi}{2},c_2,c_3,\ldots,c_n\right)$. Then
$P(z+c)=P(z)+\pi$, so both exponential conditions in conclusion $(2)$ equal $-1$. Direct substitution gives
\[
\left(\frac{\partial f_i}{\partial z_1}\right)^2+\left(f_j(z+c)-f_i(z)\right)^2
=\cos^2(2z_1)+\sin^2(2z_1)=1,
\]
for $i,j\in\{1,2\}$ with $i\ne j$.
\end{exm}
The next example shows the actual existence of the conclusion $(3)$ of Theorem \ref{t2.1}.
\begin{exm}	Take
\[
f_1(z)=f_2(z)=z_1\cos z_2+\frac{z_2}{2\pi}\sin z_2
\]
and $c=(0,2\pi,0,\ldots,0)$. Here $P_1(z)=P_2(z)=z_2$, $Q_i\equiv0$, and
\[
g(z_2+2\pi)-g(z_2)=\sin z_2,
\qquad g(z_2)=\frac{z_2}{2\pi}\sin z_2.
\]
Consequently $\partial f_i/\partial z_1=\cos z_2$ and
$f_j(z+c)-f_i(z)=\sin z_2$ for $i\ne j$, so the pair satisfies {\em(\ref{PDDE1})}.
\end{exm}
The following example shows the actual existence of the conclusion $(4)$ of Theorem \ref{t2.1}.
\begin{exm}
Suppose $(f_1(z),f_2(z))=\left(\frac{1}{2}\sin (2z_1),\frac{1}{2}\sin (2z_1)\right).$ Here $P_i(z)=-2z_1$, $A_{1i}=B_{1i}=0$ for all $i=1,2,\ldots,n$, $g_i=Q_i\equiv 0$, $c=\left(\frac{\pi}{2},c_2,c_3,\ldots,c_n\right)$. Clearly, $(f_1(z),f_2(z))$ is a solution of {\em(\ref{PDDE1})}.
\end{exm}

\begin{theo}\label{t2.2} Let $c\in\mathbb{C}^n$ and let $m_1,m_2,n_1,n_2\in\mathbb{N}$ such that $m_i+n_i>2$ for $i=1,2$ and $n_i\neq m_i$ for at least one $i\in\{1,2\}$. For finite-order transcendental entire solutions $(f_1(z),f_2(z))$ of {\em(\ref{pds1})} such that $\frac{\partial f_1(z)}{\partial z_1}$ is non-constant, the following conclusions hold in the indicated exponent regimes:
\begin{enumerate}
\item[\em{(1)}] Suppose $n_1=m_1$. Then $n_1=m_1=2$ and
\begin{enumerate}
\item[\em{(1A)}] if $n_2>m_2$, then the system of equations {\em(\ref{pds1})} has no finite-order transcendental entire solutions;
\item[\em{(1B)}] if $n_2<m_2$, then $n_2=1$ and $m_2=2$ and one of the following cases holds
\begin{enumerate}
\item[\em{(1B) (i)}] $f_1(z)=z_1\cos h(z)+\phi(z_2,\ldots,z_n)$ and $f_2(z)=(z_1-c_1)\cos h(z-c)+\sin h(z-c)+\phi(z_2-c_2,\ldots,z_n-c_n)$, where $\phi(z_2,\ldots,z_n)$ is an entire function in $\mathbb{C}^{n-1}$ and $h(z)$ is a polynomial in $\mathbb{C}^n$ such that $\frac{\partial h(z)}{\partial z_1}\equiv 0$ and 
\begin{align*}
&(z_1+2c_1)\cos h(z+2c)+\phi(z_2+2c_2,\ldots,z_n+2c_n)-
z_1\cos h(z)-\sin h(z)\\&-\phi(z_2,\ldots,z_n)=\pm \sqrt{2}\sin \frac{1}{2}h(z).
\end{align*}
\item[\em{(1B) (ii)}] $f_1(z)=-\frac{1}{2}\sin h(z)+\psi(z_2,\ldots,z_n)$ and $f_2(z)=\frac{1}{2}\sin h(z-c)+\psi(z_2-c_2,\ldots,z_n-c_n)$, where $\psi(z_2,\ldots,z_n)$ is an entire function in $\mathbb{C}^{n-1}$ and $h(z)$ is a polynomial in $\mathbb{C}^n$ such that $\frac{\partial h(z)}{\partial z_1}\equiv -2$ and
\begin{align*}
&-\frac{1}{2}\sin h(z+2c)+\psi(z_2+2c_2,\ldots,z_n+2c_n)
-\frac{1}{2}\sin h(z)-\psi(z_2,\ldots,z_n)\\&=\pm \sqrt{2}\cos \frac{1}{2}h(z);
\end{align*}
\end{enumerate}
\end{enumerate}

\item[\em{(2)}] Suppose $m_1>n_1$ and $f_j(z+c)-f_i(z)$, $f_j(z)$ share $0$ CM and $\rho((f_j(z+c)-f_i(z))/f_j(z))<1$, where $i,j\in\{1,2\}$ such that $i\neq j$. 
\begin{enumerate}
\item[\em{(2A)}] If $n_2>m_2$ such that $n_2>2$ and $m_1>\frac{n_2}{n_2-2}$, then the system of equations {\em(\ref{pds1})} does not have any finite-order transcendental entire solutions;
\item[\em{(2B)}] If $n_2<m_2$, then the system of equations {\em(\ref{pds1})} does not have any finite-order transcendental entire solutions;
\item[\em{(2C)}] If $n_2=m_2$, then $n_2=m_2=2$ and the conclusions (1B)(i) and (1B)(ii) hold;
\end{enumerate}
\item[\em{(3)}] Suppose $m_1<n_1$ and $f_j(z+c)-f_i(z)$, $f_j(z)$ share $0$ CM and $\rho((f_j(z+c)-f_i(z))/f_j(z))<1$, where $i,j\in\{1,2\}$ such that $i\neq j$. 
\begin{enumerate}
\item[\em{(3A)}] If $n_2<m_2$ such that $n_1>2$ and $m_2>\frac{n_1}{n_1-2}$, then the system of equations {\em (\ref{pds1})} does not have any finite-order transcendental entire solutions;
\item[\em{(3B)}] If $n_2>m_2$, then the system of equations {\em(\ref{pds1})} does not have any finite-order transcendental entire solutions;
\item[\em{(3C)}] If $n_2=m_2$, then the system of equations {\em(\ref{pds1})} does not have any finite-order transcendental entire solutions.
\end{enumerate}

\end{enumerate}
\end{theo}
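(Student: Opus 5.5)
Throughout I write $u_i=\partial_{z_1}f_i$ and $v_1=f_2(z+c)-f_1(z)$, $v_2=f_1(z+c)-f_2(z)$, so that (\ref{pds1}) reads $u_i^{n_i}+v_i^{m_i}=1$ for $i=1,2$. The plan is to treat each equation separately as a Fermat curve parametrized by entire functions of finite order. I then couple the two through the relations $\partial_{z_1}v_1=u_2(z+c)-u_1(z)$ and $\partial_{z_1}v_2=u_1(z+c)-u_2(z)$.

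First, for a single equation $u^n+v^m=1$ with $u$ non-constant, I rule out most exponent pairs. When $n=m$, Theorem D forces $n=m=2$. Then $u_1=\cos h_1$ and $v_1=\sin h_1$ with $h_1$ entire, and finite order forces $h_1$ to be a polynomial. This gives the claim $n_1=m_1=2$ in part (1). Integrating in $z_1$ gives $f_1$ up to a function of $(z_2,\dots,z_n)$. For unequal exponents with both at least $2$ and $1/n+1/m<1$, the curve has genus at least $1$, so there is no non-constant entire parametrization; this is Picard's theorem applied in complex lines through generic points. The remaining admissible unequal cases are $(n,m)\in\{(1,k),(k,1),(2,3),(3,2),(2,4),(4,2),(3,3),\dots\}$. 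Here I use the second main theorem, or a Nevanlinna counting argument on $1-v^m=u^n$, to compare $T(r,u)$ and $T(r,v)$. When one exponent is $1$ the equation is solvable ($u=1-v^m$), and the coupling must then be used.

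For (1A), with $n_2>m_2$: from $u_1=\cos h_1$ and $v_1=\sin h_1$, differentiating $v_1$ gives $u_2(z+c)=u_1+\partial_{z_1}h_1\cos h_1$. So $u_2$ is an explicit trigonometric polynomial in $h_1$. Substituting into $u_2^{n_2}+v_2^{m_2}=1$ and comparing exponentials $e^{\pm \iota h_1}$ (Borel's lemma) should give a contradiction. The key point is that $v_2$ has growth comparable to $u_2$, while $n_2>m_2$ forces mismatched top exponential degrees. For (1B) the same comparison forces $n_2=1$ and $m_2=2$. Writing $u_2=1-v_2^2$ and using $v_1=\sin h_1$, I split into the two cases $\partial_{z_1}h_1\equiv0$ and $\partial_{z_1}h_1\equiv-2$. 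These are the only ways the $e^{\pm\iota h_1}$ coefficients can balance, exactly as in Theorem C. They yield (1B)(i) and (1B)(ii). The constraints listed there come from iterating the shift twice, because $f_2(z)$ is recovered from $v_1(z-c)$ and then substituted into $v_2$. This produces the $2c$ shifts and the $\pm\sqrt2\sin\frac12h$ terms from $1-\cos h=2\sin^2\frac h2$.

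For (2) and (3), the CM sharing hypothesis with $\rho<1$ gives $v_i=e^{p}f_j$, where $p$ is a polynomial of degree $<1$, hence a constant. So $f_2(z+c)-f_1(z)=Af_2(z)$, and similarly for the other pair. This linearizes the difference operator. Then $u_i$ and $v_i$ are related by $\partial_{z_1}v_i=Au_j$. This reduces each equation to an ODE in $z_1$ of the form $(\partial_{z_1}v_j/A)^{n_j}+v_i^{m_i}=1$, coupled across indices. I compare characteristic functions: $n_jT(r,\partial v)=m_iT(r,v)+S$, together with $T(r,\partial v)\le T(r,v)+\overline N+S$. The thresholds $m_1>n_2/(n_2-2)$ arise exactly from $\overline N(r,0;\cdot)\le\frac1{n_2}T$ type estimates in the second main theorem applied to $1-v^{m}$ having only zeros of multiplicity $n$. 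The inequality chain $m_1(1-2/n_2)>1$ contradicts the growth comparison. (2C) reduces to the structure already analysed in (1B). In (3), $m_1<n_1$ forces the reversed inequalities to fail in every subcase.

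The main obstacle is the Borel/exponential-comparison step in (1A)/(1B). The shifts $h_1(z+c)$ versus $h_1(z)$ and the unknown $\phi(z_2,\dots,z_n)$ terms mean one must carefully separate which combinations of $h_1(z),h_1(z+c),h_1(z+2c)$ differ by constants. I would first show that $\deg_{z_1}h_1\le1$ using degree counting in $z_1$, and only then apply Borel's lemma.
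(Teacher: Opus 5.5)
\textbf{Main gap: (1A) is false as stated.} The step that fails is your treatment of (1A), and it cannot be repaired. There $1/n_2+1/m_2\ge1$ forces $m_2=1$, so $v_2=1-u_2^{n_2}$ automatically has the right growth. There is therefore no mismatch of top exponential degrees to exploit. The only genuine constraint is the compatibility $\partial_{z_1}v_2=u_1(z+c)-u_2(z)$. After a shift by $c$ it reads
\[ n_2(1+k)^{n_2-1}\cos^{n_2-1}h_1\bigl(\partial_{z_1}k\cos h_1-(1+k)k\sin h_1\bigr)=(1+k)\cos h_1-\cos h_1(z+2c),\qquad k=\partial_{z_1}h_1. \]
If you carry out your Borel comparison, the coefficients of $e^{\pm\iota n_2h_1}$ only force $k\equiv0$. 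The identity then collapses to $\cos h_1(z+2c)\equiv\cos h_1(z)$, which is no contradiction, and this branch contains genuine solutions. Take $n_1=m_1=n_2=2$, $m_2=1$, $c=(0,\pi,0,\dots,0)$ and
\[ f_1=z_1\cos z_2+\psi(z_2+\pi)-\sin z_2,\qquad f_2=-z_1\cos z_2+\psi(z_2),\qquad \psi(z_2)=\frac{z_2}{2\pi}\bigl(\sin^2z_2-\sin z_2\bigr). \]
Then $\partial_{z_1}f_1=\cos z_2$ is non-constant, $f_2(z+c)-f_1(z)=\sin z_2$ and $f_1(z+c)-f_2(z)=\sin^2z_2$. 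Both equations hold, with $f_1,f_2$ transcendental of order $1$. Replacing $\sin^2z_2$ by $1-(-\cos z_2)^{n_2}$ inside $\psi$ gives the same for every $n_2\ge2$.

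The paper's Sub-case 1.1 has exactly this hole. It asserts $\partial_{z_1}h\not\equiv0,-1$, but only $-1$ can actually be excluded (it would give $\partial_{z_1}f_2\equiv0$), and the example above has $\partial_{z_1}h\equiv0$. Swapping the indices turns the example into one with the exponents of (3C). So in (3C) the CM hypothesis is indispensable, although the paper's Sub-case 3.2 never invokes it. It does kill this branch: there $v_1=1-u_1^{n_1}$ is independent of $z_1$, so $v_1=d_2f_2$ forces $\partial_{z_1}f_2\equiv0$.

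\textbf{Other steps asserted rather than proved.} Your list of surviving unequal exponent pairs contradicts your own genus/Picard argument and the paper's Lemma~\ref{L.9}; only $(2,2)$, $(1,k)$ and $(k,1)$ remain.

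In (1B), Borel's lemma is not directly available, because $f_1(z+2c)$ involves a $z_1$-antiderivative of $\cos h_1$. The promised degree count giving $\deg_{z_1}h_1\le1$ is not supplied. The paper gets the dichotomy from multiplicities instead. Every zero of $g$ in (\ref{lmm.7}) must be multiple. Unless these zeros lie in an algebraic set, which the second main theorem rules out, this forces $(\partial_{z_1}^2h)^2=(\partial_{z_1}h)^3(1+\partial_{z_1}h)^2(2+\partial_{z_1}h)$. Hence $\partial_{z_1}h\equiv-2$ once the values $0$ and $-1$ are excluded. The paper then obtains $m_2=2$ from the double zeros of $1\mp\cos h$.

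For (2A) you need the actual Nevanlinna chain (\ref{lnm.5})--(\ref{lnm.7}) built on (\ref{lnm.4}), not a heuristic about multiplicities. Likewise, saying that ``the reversed inequalities fail'' does not prove (3B). There the paper needs the Clunie-type Lemma~\ref{L.4} and a separate analysis of the case $n_1=n_2=2$.
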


\begin{rem}
Parts \emph{(2A)} and \emph{(3A)} apply only under their displayed strict inequalities. No conclusion is asserted here for the remaining exponent configurations not covered by those inequalities.
\end{rem}

\newcommand{\shareCM}{%
	$f_j(z+c)-f_i(z),\ f_j(z)$ share $0$ CM, and\newline
	$\displaystyle \rho\!\left(\frac{f_j(z+c)-f_i(z)}{f_j(z)}\right)<1,$\newline
	$i,j\in\{1,2\}$, $i\ne j$.%
}

\begin{center}
	\setlength{\arrayrulewidth}{0.9pt}
	\setlength{\tabcolsep}{4pt}
	\renewcommand{\arraystretch}{1.2}
	\footnotesize
	\begin{longtable}{|>{\centering\arraybackslash}p{1.3cm}|
			>{\centering\arraybackslash}p{4.3cm}|
			>{\centering\arraybackslash}p{8.6cm}|}
		
		\caption{Summary of the conclusions in Theorem~\ref{t2.2}.}
		\label{tab:classification-t22}\\
		\hline
		\rowcolor{blue!25}
		\textbf{Case} & \textbf{Representation} & \textbf{Essential conditions} \\
		\hline
		\endfirsthead
		
		\multicolumn{3}{c}{\tablename\ \thetable{} -- continued from previous page}\\
		\hline
		\rowcolor{blue!25}
		\textbf{Case} & \textbf{Representation} & \textbf{Essential conditions} \\
		\hline
		\endhead
		
		\hline
		\multicolumn{3}{r}{\textit{Continued on next page}}\\
		\endfoot
		
		\hline
		\endlastfoot

		
		\multicolumn{3}{|c|}{\cellcolor{cyan!25}\textbf{Case when $n_1=m_1$}} \\
		\hline
		
		\rowcolor{cyan!10}
		(1A) & No finite-order transcendental entire solutions. &
		$n_1=m_1=2,\quad n_2>m_2.$ \\
		\hline
		
		\rowcolor{yellow!18}
		(1B)(i) &
		$f_1=z_1\cos h+\phi,$\newline
		$f_2=(z_1-c_1)\cos h(z-c)+\sin h(z-c)+\phi(z-c').$
		&
		$n_1=m_1=2,\ n_2=1,\ m_2=2$;\newline
		$h$ is a polynomial in $\mathbb{C}^n$ satisfying $\dfrac{\partial h}{\partial z_1}\equiv0$;\newline
		$\phi(z')$ is entire in $\mathbb{C}^{n-1}$;\newline
		$(z_1+2c_1)\cos h(z+2c)+\phi(z'+2c')-z_1\cos h(z)-\sin h(z)-\phi(z')$\newline
		$\hspace{1cm}=\pm\sqrt{2}\sin\frac{1}{2}h(z).$ \\
		\hline
		
		\rowcolor{yellow!18}
		(1B)(ii) &
		$f_1=-\frac12\sin h+\psi,$\newline
		$f_2=\frac12\sin h(z-c)+\psi(z-c').$
		&
		$n_1=m_1=2,\ n_2=1,\ m_2=2$;\newline
		$h$ is a polynomial in $\mathbb{C}^n$ satisfying $\dfrac{\partial h}{\partial z_1}\equiv-2$;\newline
		$\psi(z')$ is entire in $\mathbb{C}^{n-1}$;\newline
		$-\frac12\sin h(z+2c)+\psi(z'+2c')-\frac12\sin h(z)-\psi(z')$\newline
		$\hspace{1cm}=\pm\sqrt{2}\cos\frac12h(z).$ \\
		\hline

		
		\multicolumn{3}{|c|}{\cellcolor{green!22}\textbf{Case when $m_1>n_1$}} \\
		\hline
		
		\rowcolor{green!10}
		(2A) & No finite-order transcendental entire solutions. &
		$n_2>m_2,\quad n_2>2,\quad m_1>\dfrac{n_2}{n_2-2}.$\newline
		\shareCM \\
		\hline
		
		\rowcolor{green!10}
		(2B) & No finite-order transcendental entire solutions. &
		$n_2<m_2.$\newline
		\shareCM \\
		\hline
		
		\rowcolor{green!10}
		(2C) & Conclusions (1B)(i) and (1B)(ii) hold. &
		$n_2=m_2=2.$\newline
		\shareCM \\
		\hline

		
		\multicolumn{3}{|c|}{\cellcolor{orange!22}\textbf{Case when $m_1<n_1$}} \\
		\hline
		
		\rowcolor{orange!10}
		(3A) & No finite-order transcendental entire solutions. &
		$n_2<m_2,\quad n_1>2,\quad m_2>\dfrac{n_1}{n_1-2}.$\newline
		\shareCM \\
		\hline
		
		\rowcolor{orange!10}
		(3B) & No finite-order transcendental entire solutions. &
		$n_2>m_2.$\newline
		\shareCM \\
		\hline
		
		\rowcolor{orange!10}
		(3C) & No finite-order transcendental entire solutions. &
		$n_2=m_2.$\newline
		\shareCM \\
		\hline
		
	\end{longtable}
\end{center}

\section{\bf{Key lemmas}}
\begin{lem}\label{L.1} \cite[Lemma 1.37]{HLY1} Let $f$ be a non-constant meromorphic function in $\mathbb{C}^n$ and $I=(\alpha_1,\ldots,\alpha_n)\in \mathbb{Z}^n_+$ be a multi-index. Then for any $\varepsilon>0$, we have
\[m\left(r,\partial^I(f)/f\right)\leq |I|\log^+T(r,f)+|I|(1+\varepsilon)\log^+\log T(r,f)+O(1)\]
for all $r\not\in E$, where $l(E)<+\infty$.
\end{lem}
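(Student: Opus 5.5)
The idea is to reduce to a first-order estimate and then iterate. Write $I$ as a chain of multi-indices $0=I_0<I_1<\cdots<I_k=I$ with $k=|I|$, where each step raises exactly one coordinate by $1$. Then
\[
\frac{\partial^I f}{f}=\prod_{s=1}^{k}\frac{\partial^{I_s}f}{\partial^{I_{s-1}}f},
\qquad\text{so}\qquad
m\Bigl(r,\frac{\partial^I f}{f}\Bigr)\le\sum_{s=1}^{k} m\Bigl(r,\frac{\partial_{z_{j_s}}g_{s-1}}{g_{s-1}}\Bigr),
\]
where $g_{s-1}=\partial^{I_{s-1}}f$. If some $g_{s-1}\equiv 0$ or is constant, then $\partial^I f\equiv 0$ and there is nothing to prove. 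It therefore suffices to prove two things:
\begin{enumerate}
\item[(a)] the case $|I|=1$, namely $m(r,\partial_{z_j}g/g)\le \log^+T(r,g)+(1+\varepsilon)\log^+\log T(r,g)+O(1)$ outside a set of finite logarithmic measure;
\item[(b)] a growth comparison $T(r,\partial_{z_j}g)\le 2T(r,g)+o(T(r,g))$ outside such a set.
\end{enumerate}
Applying (b) iteratively gives $\log^+T(r,g_{s})\le \log^+T(r,f)+O(1)$ and the analogous bound for $\log\log$. The constants can be absorbed after replacing $\varepsilon$ by a slightly smaller value, and the exceptional sets are finitely many, so their union still has finite logarithmic measure.

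For (b) I will use $N(r,\partial_{z_j}g)\le 2N(r,g)$, since a pole divisor of multiplicity $p$ becomes one of multiplicity at most $p+1\le 2p$. Together with $m(r,\partial_{z_j}g)\le m(r,g)+m(r,\partial_{z_j}g/g)$, this reduces (b) to (a) as well.

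For (a), I will follow the Vitter / Biancofiore--Stoll approach. The first step is to represent $\log|g|$ on the ball $\mathbb{C}^n(R)$ by the Poisson--Jensen (Green) formula. This gives an integral over $\mathbb{C}^n\langle R\rangle$ against the Poisson kernel, plus Green-potential sums over the zero and pole divisors. Differentiating in $z_j$ yields a pointwise bound for $|\partial_{z_j}g/g|$ on $\mathbb{C}^n\langle r\rangle$, $r<R$. That bound consists of a term of order $\frac{R}{(R-r)^{2}}\int_{\mathbb{C}^n\langle R\rangle}|\log|g||\,\sigma_n$ and divisor terms involving $|z-a|^{-(2n-1)}$ weighted by the divisor measure.

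Next I take $\log^+$ and integrate over the sphere against $\sigma_n$, using concavity of $\log$ (Jensen's inequality). The divisor singularities are locally integrable on the sphere, so their contribution is controlled by $n_\nu(R)$, and hence by $N(R)$ for $R$ close to $r$. The result is
\[
m\Bigl(r,\frac{\partial_{z_j}g}{g}\Bigr)\le \log^+T(R,g)+\log^+\frac{R}{R-r}+O(1).
\]
Finally I choose $R=r+1/\log^{1+\varepsilon}T(r,g)$ and apply the Borel growth lemma, which gives $T(R,g)\le 2T(r,g)$ for $r$ outside a set of finite logarithmic measure. This produces exactly $\log^+T+(1+\varepsilon)\log^+\log T$.

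An alternative route, if the kernel estimates become messy, is to restrict to complex lines $\ell_w=\{tw\}$ with $w$ on the unit sphere. The idea is to write $\partial_{z_j}$ as a combination of the radial derivative and tangential derivatives, apply the one-variable lemma to $t\mapsto g(tw)$, and average over $w$ using $T(r,g)=\int T(r,g|_{\ell_w})\,\sigma$. The trouble is that tangential derivatives are not one-variable derivatives along $\ell_w$, so this requires rotating coordinates and concavity arguments.

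I expect the main obstacle to be the integrability estimate for the differentiated Green kernel over the divisor: bounding $\int_{\mathbb{C}^n\langle r\rangle}\log^+\!\int|z-a|^{-(2n-1)}\,d\nu(a)\,\sigma_n(z)$ by $O(\log^+ n_\nu(R)+\log\frac{R}{R-r})$ uniformly, with only the sharp leading constant $1$ in front of $\log^+T$. My plan there is to apply Jensen's inequality to push $\log^+$ outside, then use Fubini together with the uniform bound $\int_{\mathbb{C}^n\langle r\rangle}|z-a|^{-(2n-1)}\sigma_n\le C/(R-r)$. This bound holds because the sphere is a real $(2n-1)$-dimensional hypersurface, so the singularity is only logarithmically borderline and is cut off by the gap $R-r$.
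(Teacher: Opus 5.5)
You should know first that the paper gives no proof of this lemma: it quotes it from Hu--Li--Yang, so your argument has to stand on its own. Your reductions are sound. The chain decomposition of $\partial^I f/f$, the pole estimate $N(r,\partial_{z_j}g)\le 2N(r,g)$, and the iteration of (b) do reduce everything to the first-order estimate (a) with constants $1$ and $1+\varepsilon$.

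\textbf{Main gap: the divisor term in (a).} The gap is exactly at the step you flagged, and your proposed fix does not work. The bound $\int_{\mathbb{C}^n\langle r\rangle}|z-a|^{-(2n-1)}\sigma_n(z)\le C/(R-r)$ is false.
\begin{itemize}
\item The points $a$ range over the divisor in the whole ball $\mathbb{C}^n(R)$. The gap $R-r$ separates $\mathbb{C}^n\langle r\rangle$ only from the outer sphere, not from the divisor.
\item For $a\in\mathbb{C}^n\langle r\rangle$ the singularity has order equal to the real dimension $2n-1$ of the sphere, so the integral diverges; ``logarithmically borderline'' means divergent.
\item For $a$ at distance $\eta$ from $\mathbb{C}^n\langle r\rangle$ the integral is of size $r^{1-2n}\log(r/\eta)$.
\end{itemize}
So after Jensen and Fubini the divisor contribution is $\log^+\int K_r(a)\,d\mu(a)$, where $K_r(a)=\int_{\mathbb{C}^n\langle r\rangle}|z-a|^{1-2n}\sigma_n(z)$ is unbounded. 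This quantity is not controlled by $\log^+ n_\nu(R)+\log\frac{R}{R-r}+O(1)$. For $n=1$ it is $+\infty$ whenever $g$ has a zero or pole on $|z|=r$, and for $n\ge 2$ its size depends on how the divisor approaches $\mathbb{C}^n\langle r\rangle$. You need a genuinely different idea here. One option is to average in $r$ and discard a further exceptional set, which costs extra $\log\log T$ terms unless done carefully. Another is to avoid kernel estimates on the divisor altogether, for instance by applying the Green--Jensen formula and a calculus lemma to a convex function of $\log|g|^2$; there the divisor enters only through $N(r,0;g)+N(r,g)$.

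\textbf{Bookkeeping that would still spoil the sharp constants.}
\begin{itemize}
\item Your pointwise Poisson bound $\frac{R}{(R-r)^2}\int|\log|g||\,\sigma_n$ is the one-variable bound. In $\mathbb{C}^n$ the kernel gradient is of size $R^{2n-1}/(R-r)^{2n}$ near $y=Rz/|z|$. Taking $\log^+$ of a pointwise bound therefore gives $2n(1+\varepsilon)\log\log T$. For this term you must integrate in $z$ first, which is legitimate because $|z-y|\ge R-r$.
\item $n_\nu(R)$ is not bounded by $N(R)$. You only have $n_\nu(R)\le N(R')/\log(R'/R)$ for some $R'>R$. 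A second gap $R'-R$ doubles the coefficient of $\log\log T$, unless you keep the boundary decay of the Green function that the bound $|\nabla_zG_R(z,a)|\le C|z-a|^{1-2n}$ discards.
\item With your additive choice $R=r+\log^{-(1+\varepsilon)}T(r,g)$, the term $\log^+\frac{R}{R-r}$ leaves a stray $\log r$, which is not $O(1)$. Either the intermediate bound should read $\log\frac{1}{R-r}$, which matches the scaling of $\partial_{z_j}$, or you should choose $R=r\bigl(1+\log^{-(1+\varepsilon)}T(r,g)\bigr)$.
\end{itemize}
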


\begin{lem}\label{L.2} \cite[Lemma 1.2]{HY1} Let $f$ be a non-constant meromorphic function in $\mathbb{C}^n$ and let $a_1,a_2,\ldots,a_q$ be different points in $\mathbb{C}\cup \{\infty\}$. Then
\beas \parallel (q-2)T(r,f)\leq \sideset{}{_{j=1}^{q}}{\sum} \ol N(r,a_j;f)+O(\log (rT(r,f))).\eeas
\end{lem}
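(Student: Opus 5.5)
The inequality is the Second Main Theorem in $\mathbb{C}^n$. I would prove it by the classical one-variable argument, using a single partial derivative in place of $f'$ and Lemma \ref{L.1} in place of the one-variable logarithmic derivative lemma.

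\textbf{Normalisation.} After a Möbius transformation $L$ we may assume all $a_j$ are finite and $\infty$ is not among them. This changes $T(r,f)$ only by $O(1)$ and permutes the counting functions $\ol N(r,a_j;f)$. Since $f$ is non-constant, some partial derivative $D=\partial_{z_k}$ satisfies $Df\not\equiv 0$; fix it. The case $q\le 2$ is trivial, so take $q\ge 3$.

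\textbf{Proximity estimate.} Put
\[F=\sum_{j=1}^q \frac{1}{f-a_j}.\]
With $\delta=\min_{i\neq j}|a_i-a_j|$, at each point at most one $|f-a_j|$ is below $\delta/(2q)$. The usual pointwise estimate, integrated over $\mathbb{C}^n\langle r\rangle$, gives
\[\sum_{j} m(r,a_j;f)\le m(r,F)+O(1).\]
Next write $F=\frac{1}{Df}\sum_j \frac{Df}{f-a_j}$. Lemma \ref{L.1}, applied to each $f-a_j$ with $|I|=1$, then gives
\[m(r,F)\le m(r,1/Df)+O(\log(rT(r,f)))\]
outside a set of finite logarithmic measure. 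By the first main theorem and Jensen's formula in $\mathbb{C}^n$,
\[m(r,1/Df)=T(r,Df)-N(r,0;Df)+O(1).\]
Also $T(r,Df)\le m(r,f)+m(r,Df/f)+N(r,Df)$ and $N(r,Df)=N(r,f)+\ol N(r,f)$. Combining these with the first main theorem for each $a_j$ yields
\[(q+1)T(r,f)\le \sum_j N(r,a_j;f)+2T(r,f)-\big(N(r,0;Df)-\ol N(r,f)\big)+\ldots.\]
Here I am adding $\infty$ as the $(q+1)$-st value, which is harmless because it only strengthens the bound.

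\textbf{Ramification step (main obstacle).} The key point is the divisor inequality
\[\sum_j\big(\mu^{a_j}_f-\mu^{a_j}_{f,1}\big)\le \mu^0_{Df}\]
on $\mathbb{C}^n$, together with $\mu^\infty_{Df}=\mu^\infty_f+\mu^\infty_{f,1}$, holding off an analytic set of codimension at least $2$, which does not affect the valence functions.

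To check this, work near a regular point of an irreducible component of $(f-a_j)^{-1}(0)$ and write $f-a_j=u^\mu v$, with $u$ a local defining function and $v$ not vanishing identically on $\{u=0\}$. Then
\[Df=u^{\mu-1}\big(\mu\, (Du)\, v+u\, Dv\big),\]
so $Df$ vanishes to order at least $\mu-1$ along that component. Poles are handled in the same way, so that $Df$ has pole order exactly $\mu+1$ along a polar component of order $\mu$. This is where one must be careful: multiplicities in $\mathbb{C}^n$ are generic orders along components, and I would justify the passage from local orders to the valence functions $N_\nu$ through the integral definition of $n_\nu$.

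\textbf{Conclusion.} Summing the divisor inequality gives
\[\sum_j N(r,a_j;f)-N(r,0;Df)\le \sum_j \ol N(r,a_j;f).\]
Substituting this above (and moving the pole terms to the $\ol N(r,f)$ side if $\infty$ is one of the original $a_j$) yields
\[(q-2)T(r,f)\le \sum_j \ol N(r,a_j;f)+O(\log(rT(r,f)))\]
for $r$ outside a set of finite logarithmic measure. Such a set has zero upper density, which is the meaning of $\parallel$.
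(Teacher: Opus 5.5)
The paper contains no proof of this lemma. It is imported by citation and used as a black box, so there is no internal argument to compare yours against.

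Your argument is correct. It is the classical Nevanlinna proof of the second main theorem carried over to $\mathbb{C}^n$:
\begin{itemize}
\item the proximity estimate for $F=\sum_j 1/(f-a_j)$;
\item the factorization $F=(Df)^{-1}\sum_j Df/(f-a_j)$ with a fixed coordinate derivative $D=\partial_{z_k}$;
\item Lemma~\ref{L.1} for the logarithmic derivatives;
\item the first main theorem for $Df$;
\item a ramification inequality checked at regular points of the components of the $a_j$-divisors.
\end{itemize}
It is self-contained modulo Lemma~\ref{L.1} and Jensen's formula, and it actually gives the slightly better remainder $O(\log T(r,f))$. You are also right that a set of finite logarithmic measure has zero upper density, so the exceptional set is of the required type.

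Your normalization is fine. With all $a_j$ finite you obtain $(q-1)T(r,f)\le \ol N(r,f)+\sum_j\ol N(r,a_j;f)+O(\log T(r,f))$, and then discard the pole term using $\ol N(r,f)\le T(r,f)+O(1)$. Alternatively, send one $a_j$ to $\infty$ and read off the statement directly.

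One claim needs correcting. With a \emph{fixed} coordinate derivative, $Df$ need not have pole order exactly $\mu+1$ along a polar component of order $\mu$. Writing $f=v/u^{\mu}$ locally gives $Df=(u\,Dv-\mu v\,Du)/u^{\mu+1}$, and the order drops whenever $Du$ vanishes identically on $\{u=0\}$. For instance, $f=1/(z_1z_2)$ with $D=\partial_{z_1}$ gives $Df=-1/(z_1^2z_2)$. This has pole order $1$ along the whole hypersurface $\{z_2=0\}$, not merely off a codimension-two set, whereas $\mu^{\infty}_f+\mu^{\infty}_{f,1}=2$ there.

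So the identities $N(r,Df)=N(r,f)+\ol N(r,f)$ and $\mu^{\infty}_{Df}=\mu^{\infty}_f+\mu^{\infty}_{f,1}$ are false in general and must be weakened to the inequalities $N(r,Df)\le N(r,f)+\ol N(r,f)$ and $\mu^{\infty}_{Df}\le\mu^{\infty}_f+\mu^{\infty}_{f,1}$. That is the only direction your argument uses, namely in the bound $T(r,Df)\le T(r,f)+\ol N(r,f)+m(r,Df/f)$. The zero-side computation, which gives vanishing order at least $\mu-1$, is likewise only used as a lower bound. The proof therefore goes through unchanged.
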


\begin{lem}\label{L.3} \cite[Theorem 1.26]{HLY1} Let $f$ be non-constant meromorphic function in $\mathbb{C}^n$. Assume that
$R(z, w)=\frac{A(z, w)}{B(z, w)}$. Then
\beas T\left(r, R_f\right)=\max \{p, q\} T(r, f)+O\Big(\sideset{}{_{j=0}^p}{\sum} T(r, a_j)+\sideset{}{_{j=0}^q}{\sum}T(r, b_j)\Big),\eeas
where $R_f(z)=R(z, f(z))$ and two coprime polynomials $A(z, w)$ and $B(z,w)$ are given
respectively as follows:
\[A(z,w)=\sideset{}{_{j=0}^p}{\sum} a_j(z)w^j\;\;\text{and}\;\;B(z,w)=\sideset{}{_{j=0}^q}{\sum} b_j(z)w^j.\]
\end{lem}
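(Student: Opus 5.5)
We follow the classical one-variable Valiron--Mohon'ko argument and check that each step carries over to $\mathbb{C}^n$. There, $m(r,\cdot)$ is an integral of $\log^+$ over the sphere $\mathbb{C}^n\langle r\rangle$ against $\sigma_n$, and $N(r,\cdot)$ is built from divisors. Write $\Phi(r)=\sum_{j=0}^pT(r,a_j)+\sum_{j=0}^qT(r,b_j)$ and $d=\max\{p,q\}$.

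\textbf{Upper bound.} First take $B\equiv 1$, so $R_f=A(z,f)=\sum_{j\le p}a_jf^j$. Pointwise, $\log^+|A(z,f)|\le p\log^+|f|+\sum_j\log^+|a_j|+\log(p+1)$; integrating over $\mathbb{C}^n\langle r\rangle$ bounds $m$. For the divisors, a pole of $A(z,f)$ at $z$ has multiplicity at most $p\,\mu^\infty_f(z)+\max_j\mu^\infty_{a_j}(z)$, so the $N$-terms obey the same bound. Hence $T(r,A_f)\le pT(r,f)+O(\Phi(r))$. For general $R=A/B$, use that the characteristic is unchanged (up to $O(1)$) under Möbius changes of the dependent variable. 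If $p\ge q$, perform Euclidean division $A=QB+S$ in the field of meromorphic functions, with $\deg_w S<q$; the coefficients of $Q$ and $S$ are rational in the $a_j,b_j$, so their characteristics are $O(\Phi)$. This reduces the problem to estimating $B/S$ and $Q$. Iterating the continued-fraction expansion gives $T(r,R_f)\le dT(r,f)+O(\Phi)$, by induction on $p+q$.

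\textbf{Lower bound, polynomial case.} Suppose $a_p\not\equiv0$. At points where $|f(z)|\ge 2\sum_{j<p}|a_j/a_p|+1$, we have $|A(z,f)|\ge\frac12|a_p||f|^p$. At the remaining points, $\log^+|f|\le\sum_j\log^+|a_j/a_p|+O(1)$. Integrating gives
\[
p\,m(r,f)\le m(r,A_f)+O(\Phi(r)).
\]
For poles, $p\,\mu^\infty_f\le\mu^\infty_{A_f}+\mu^0_{a_p}+\sum_j\mu^\infty_{a_j}$ pointwise, which gives $pN(r,f)\le N(r,A_f)+O(\Phi)$.

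\textbf{Lower bound, rational case.} Reduce to the previous case by composing with a Möbius map $f\mapsto 1/(f-w_0)$, with $w_0\in\mathbb{C}$ chosen so that $B(z,w_0)\not\equiv0$, or by working with $1/R$; this lets us assume the degree of the numerator is $d$. The key new ingredient is the coprimality hypothesis. The resultant $\mathrm{Res}(z)=\mathrm{Res}_w(A,B)\not\equiv0$ is a polynomial in the coefficients, and there are polynomials $U,V$ in $w$ with $UA+VB=\mathrm{Res}$. From this, at points where $A_f$ and $B_f$ are simultaneously small, $\mathrm{Res}$ is small. So the set where cancellation destroys the lower bound is controlled by $m(r,1/\mathrm{Res})+\sum m(r,\text{coeff.\ of }U,V)=O(\Phi)$. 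The same identity controls common zeros on the divisor level.

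We expect this last step to be the main obstacle: the bookkeeping showing that $A_f$ and $B_f$ cannot share ``too much'' on the sphere or on divisors. We would carry it out pointwise with the identity $UA+VB=\mathrm{Res}$, exactly as in the one-variable proof. No $\mathbb{C}^n$-specific tool is needed beyond the first main theorem and the additivity properties of $m$ and $N$.
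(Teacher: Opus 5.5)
The paper gives no proof of this lemma; it is quoted from Hu--Li--Yang \cite[Theorem 1.26]{HLY1}, so your argument has to stand on its own. Three parts of it are fine: your upper bound (the case $B\equiv 1$ plus Euclidean division over the field of meromorphic functions), the proximity half of the polynomial lower bound, and your use of the resultant $\mathrm{Res}_w(A,B)\not\equiv 0$ to exploit coprimality. The genuine gap is the rational lower bound, which is the actual content of the lemma. You never say which inequality you would prove. Moreover, the mechanism you describe --- the polynomial bound for $A_f$, spoiled only by cancellation with $B_f$ --- cannot produce $d\,T(r,f)$. With numerator of degree $d=p\ge q$, near poles and large values of $f$ one has $R_f\approx (a_p/b_q)f^{p-q}$. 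So dividing by $B_f$ lowers their contribution from $p$ to $p-q$, with no cancellation involved. ``Reducing to the previous case'' therefore only gives $T(r,R_f)\ge T(r,A_f)-T(r,B_f)-O(1)\ge (p-q)T(r,f)-O(\Phi)$. The missing $q\,T(r,f)$ must come from the zeros and small values of $B_f$, via the first main theorem $qT(r,f)+O(\Phi)=T(r,B_f)=N(r,0;B_f)+m(r,1/B_f)+O(1)$. The identity $UA+VB=\mathrm{Res}$ is exactly what transfers these zeros and small values to poles and large values of $R_f$, and that step is absent from your plan.

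The cleanest repair is to normalize the other way. Pass to $1/R$ so that $\deg_w B=d$; your substitution $g=1/(f-w_0)$ with $B(z,w_0)\not\equiv 0$ in fact makes the \emph{denominator}, not the numerator, of degree $d$. On the part of $\mathbb{C}^n\langle r\rangle$ where $|f|\ge 1+2\sum_{j<d}|b_j/b_d|$, you have $|B_f|\ge\frac12|b_d|$. Elsewhere $|U(z,f)|+|V(z,f)|$ is bounded by coefficient data, and $|\mathrm{Res}|\le(|U(z,f)|+|V(z,f)|)\max\{|A_f|,|B_f|\}$ gives $\log^+|1/B_f|\le\log^+|R_f|+\log^+\big((|U(z,f)|+|V(z,f)|)/|\mathrm{Res}|\big)$. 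Integrating, $m(r,1/B_f)\le m(r,R_f)+O(\Phi)$. On the divisor side, take an irreducible component of the zero divisor of $B_f$. There the pole order of $f$ is bounded by the coefficient divisors, since a zero of $B_f$ at a pole of $f$ needs two terms $b_if^i,b_jf^j$ of equal order, or $b_df^d$ holomorphic there. The same identity then shows that either the vanishing order of $B_f$ or the order of $A_f$ there is bounded by the multiplicities of $\mathrm{Res}$ and of the coefficient poles. This yields $N(r,0;B_f)\le N(r,R_f)+O(\Phi)$, hence $T(r,R_f)\ge T(r,B_f)-O(\Phi)=d\,T(r,f)-O(\Phi)$ by your polynomial case applied to $B$. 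Separately, your pointwise pole inequality in the polynomial case is false as stated. For $A=z_1w^2+w$ and $f=1-1/z_1$ one gets $A_f=z_1-1$, so along $\{z_1=0\}$ the left side is $2$ and the right side is $1$. When the leading term does not dominate you only get $\mu^\infty_f\le\mu^0_{a_p}+\mu^\infty_{a_j}$ for some $j<p$, so the coefficient terms need a factor $p$; this is harmless inside $O(\Phi)$.
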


\begin{lem}\label{L.6}\cite[Lemma 3.2]{HLY1} Let $f_j\not\equiv 0\;(j=1,2,\ldots,n;n\geq 3)$ be meromorphic functions in $\mathbb{C}^n$ such that $f_1,\ldots,f_{n-1}$ are non-constants and $f_1+\cdots+f_n=1$. If
\beas \parallel \;\;\sideset{}{_{j=1}^n}{\sum}\Big\lbrace N_{n-1}(r,0;f_j)+(n-1)\ol{N}(r,f_j)\Big\rbrace<\lambda T(r,f_k)+O(\log^+(T(r,f_k)))\eeas
holds for $k=1,2,\ldots,n-1$, where $\lambda<1$, then $f_n\equiv 1$.
\end{lem}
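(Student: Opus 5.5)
The plan is to argue by contradiction with a generalized Wronskian (Cramer's rule) and the logarithmic derivative lemma, Lemma \ref{L.1}, inducting on the number $n$ of summands. This follows the classical one-variable proof (Yang--Yi), transplanted to $\mathbb{C}^n$ via Fujimoto-type generalized Wronskians.

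\textbf{Step 1: reduction to the linearly independent case.} Suppose $f_n\not\equiv 1$. First I would reduce to the case where $f_1,\ldots,f_n$ are linearly independent over $\mathbb{C}$. If there is a nontrivial relation $\sum_j a_jf_j\equiv 0$, I eliminate one function from $f_1+\cdots+f_n=1$ and obtain an identity of the same shape with fewer terms. After normalising, and regrouping terms that are constant multiples of each other, the summands are still of the form (constant)$\cdot f_j$.

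The zeros and poles of the new summands are controlled by those of the old ones. Since $N_{k}\le N_{n-1}$ for $k\le n-1$, the hypothesis with the same $\lambda<1$ is inherited, up to an $O(1)$ change in $T(r,f_k)$ from the constant factors. The induction hypothesis, with the base case handled directly, then either yields the conclusion or a contradiction. The delicate sub-case is when the eliminated function is $f_n$ itself. There I would check that the reduced identity forces some non-constant $f_k$ to be constant, or reproduces $f_n\equiv1$.

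\textbf{Step 2: Cramer's rule in the independent case.} Now assume linear independence. By the standard fact in several variables, there exist multi-indices $I_1=0,I_2,\ldots,I_n$ with $|I_k|\le k-1$ such that the generalized Wronskian $W=\det\big(\partial^{I_k}f_j\big)_{k,j}\not\equiv 0$. Differentiating $\sum_j f_j=1$ gives the linear system $\sum_j\partial^{I_k}f_j=\delta_{k1}$. By Cramer's rule, $f_1=W_1/W$, where $W_1$ is the cofactor determinant.

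Dividing each column by its $f_j$, I write
\[
f_1=\frac{f_1\,\Delta_1}{\Delta},\qquad \Delta=\det\Big(\frac{\partial^{I_k}f_j}{f_j}\Big),
\]
where $\Delta_1$ is built from the quotients $\partial^{I_k}f_j/f_j$ for $j\ge2$. Then I estimate $T(r,f_1)$ through $m(r,\cdot)$ and $N(r,\cdot)$ of these quotients. By Lemma \ref{L.1}, every $m\big(r,\partial^{I}f_j/f_j\big)$ is $O(\log^+ T(r,f_j))$ outside a set of finite logarithmic measure, and each $T(r,f_j)$ is comparable to $T(r,f_1)$ from the identity.

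\textbf{Step 3: local divisor count (main obstacle).} The main work is the pointwise comparison of divisors. At a zero of $f_j$ of multiplicity $\mu$, the column $\partial^{I_k}f_j$ contributes at most $\min\{\mu,n-1\}$ to the zero divisor of $f_1W_1/W$ compared with $f_1$. At a pole of $f_j$, the loss is at most $n-1$ per pole. This should give
\[
\parallel\; T(r,f_1)\le\sum_{j=1}^n\big\{N_{n-1}(r,0;f_j)+(n-1)\ol N(r,f_j)\big\}+O(\log^+T(r,f_1)).
\]
In $\mathbb{C}^n$ this has to be done at generic points of the divisors, by expanding in local coordinates transversal to the divisor; the non-generic set has codimension $\ge2$ and is invisible to counting functions. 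I expect this bookkeeping to be the hardest step.

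\textbf{Step 4: conclusion.} Combining Step 3 with the hypothesis for $k=1$ gives $T(r,f_1)\le\lambda T(r,f_1)+O(\log^+T(r,f_1))$ outside an exceptional set. Since $\lambda<1$ and $f_1$ is non-constant, this is a contradiction. Hence the independent case cannot occur, and the reduction of Step 1 forces $f_n\equiv1$.
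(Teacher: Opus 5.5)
The paper gives no proof of this lemma; it is quoted from Hu--Li--Yang. Your outline (reduce to a linearly independent family, then a generalized Wronskian with Lemma~\ref{L.1}) is the standard route. However, the central identity of your Step~2 is false, and even as written it could not bound $m(r,f_1)$.

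With $M=(\partial^{I_k}f_j)_{k,j}$, the relations $\sum_j\partial^{I_k}f_j=\delta_{k1}$ say $M(1,\ldots,1)^{T}=e_1$. The unknowns are the $1$'s, so Cramer's rule gives $W_1/W=1$, i.e.\ $W\equiv W_1=\det(\partial^{I_k}f_j)_{k,j\ge2}$. It does not give $f_1=W_1/W$. Your display $f_1=f_1\Delta_1/\Delta$ is also wrong: since $\Delta_1=W_1/(f_2\cdots f_n)$ and $\Delta=W/(f_1\cdots f_n)$, its right-hand side equals $f_1^2$. An identity with $f_1$ on both sides would in any case be circular for estimating $m(r,f_1)$.

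What you want is Cramer's rule for the system $\sum_j(\partial^{I_k}f_j/f_j)\,f_j=\delta_{k1}$ in the unknowns $f_j$, namely $f_1=\Delta_1/\Delta$. Then $m(r,f_1)\le m(r,\Delta_1)+m(r,1/\Delta)$. The first main theorem gives $m(r,1/\Delta)=m(r,\Delta)+N(r,\Delta)-N(r,0;\Delta)+O(1)$, and Lemma~\ref{L.1} yields
\[
\parallel\; T(r,f_1)\le N(r,f_1)+N(r,\Delta)-N(r,0;\Delta)+O\Big(\sum_{j}\log^+T(r,f_j)\Big).
\]

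Your Step~3 then misses what actually has to be shown. Comparing the divisor of $f_1W_1/W$ with that of $f_1$ says nothing, since they are the same function. The real issue is that $N(r,f_1)$ above is untruncated, whereas the target contains only $(n-1)\overline N(r,f_1)$. At generic points of the divisors one needs
\[
\mu^\infty_{f_1}-\mu_\Delta=\mu^0_{f_1}+\sum_{j\ge2}\mu_{f_j}-\mu_W\le\sum_{j=1}^{n}\Big\{\min\{\mu^0_{f_j},n-1\}+(n-1)\min\{\mu^\infty_{f_j},1\}\Big\}.
\]
Bound $\mu_W$ column by column: $\partial^{I_k}f_j$ has order $\ge\max\{\mu^0_{f_j}-(n-1),0\}$ at zeros of $f_j$ and $\ge-\mu^\infty_{f_j}-(n-1)$ at poles. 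This works only if you use the full $W$ at zeros of $f_1$, but the minor $W_1$, in which $f_1$ does not appear, at poles of $f_1$. With the full $W$, a pole of $f_1$ of order $p$ contributes up to $p+n-1$, which is not truncated. So the identity $W\equiv W_1$, which is exactly what Cramer's rule gives, is the missing ingredient.

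Two smaller points:
\begin{itemize}
\item The identity $\sum_jf_j=1$ does not make every $T(r,f_j)$ comparable to $T(r,f_1)$; it only gives $T(r,f_j)\le\sum_{k\ne j}T(r,f_k)+O(1)$. For each $r$, run the estimate with an index $k\le n-1$ maximizing $T(r,f_k)$, using $f_k=\Delta_k/\Delta$ and $T(r,f_n)\le(n-1)\max_{k<n}T(r,f_k)+O(1)$.
\item In Step~1, if $f_n$ survives the elimination with coefficient $b\ne1$, induction only gives $bf_n\equiv1$. Either eliminate $f_n$ whenever it occurs in the linear relation (otherwise its coefficient stays $1$), or add the step that $\sum_{j<n}f_j=1-1/b\ne0$ then leads to a contradiction.
\end{itemize}
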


\begin{lem}\label{L.5}\cite[Proposition 3.2]{hy1} Let $P$ be a non-constant entire function in $\mathbb{C}^n$. Then
\[\rho(e^P)=
\begin{cases}
\deg(P) & \text{if $P$ is a polynomial,}\\
+\infty & \text{otherwise}.
\end{cases}\]
\end{lem}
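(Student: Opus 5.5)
Lemma \ref{L.5} is a cited result (\cite[Proposition 3.2]{hy1}), so I would only sketch why it holds. The approach is to split according to whether $P$ is a polynomial, and in each case to compare $T(r,e^P)$ with the growth of $\operatorname{Re}P$ on spheres.

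\emph{Step 1: $P$ a polynomial.} Since $e^P$ is entire and zero-free, $N(r,e^P)=0$. Hence
\[T(r,e^P)=m(r,e^P)=\int_{\mathbb{C}^n\langle r\rangle}(\operatorname{Re}P)^+\,\sigma_n .\]
Let $d=\deg P$ and let $P_d$ be the top homogeneous part of $P$. Then $(\operatorname{Re}P)^+\le |P|=O(r^d)$ on $\mathbb{C}^n\langle r\rangle$, which gives $T(r,e^P)=O(r^d)$. For the lower bound, write $z=r\zeta$ with $|\zeta|=1$, so that
\[\operatorname{Re}P(r\zeta)=r^d\operatorname{Re}P_d(\zeta)+O(r^{d-1}).\]
The function $P_d\not\equiv 0$ on the unit sphere, so $\operatorname{Re}P_d$ is strictly positive on a set of positive $\sigma_n$-measure. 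Therefore $\int(\operatorname{Re}P_d)^+\sigma_n>0$, and $T(r,e^P)\ge c\,r^d$ for large $r$. Combining the two bounds gives $\rho(e^P)=d$.

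\emph{Step 2: $P$ transcendental.} I would argue by contradiction: assume $\rho(e^P)=\rho<\infty$.

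\begin{itemize}
\item From $T(r,e^P)=O(r^{\rho+\varepsilon})$ and the standard estimate for entire functions,
\[\log M(r,e^P)\le C\,T(2r,e^P),\]
we get $\max_{|z|=r}\operatorname{Re}P(z)\le r^{\rho+2\varepsilon}$ for large $r$.
\item Apply the Borel--Carathéodory inequality on complex lines through the origin (or directly in $\mathbb{C}^n$). This converts the one-sided bound on $\operatorname{Re}P$ into $M(r,P)=O(r^{\rho+2\varepsilon})$.
\item By the Cauchy estimates, $P$ is then a polynomial, which is a contradiction. Hence $\rho(e^P)=+\infty$.
\end{itemize}

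The main obstacle is the lower bound in Step 1, namely showing that $\int(\operatorname{Re}P_d)^+\sigma_n>0$. One also has to note that $\sigma_n$ restricted to the sphere is the normalized rotation-invariant measure, so it charges open sets. Since the lemma is cited, I would simply invoke \cite{hy1} for these details.
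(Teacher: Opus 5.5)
The paper gives no argument for this lemma; it is quoted from \cite{hy1} and used only as a black box, so there is no in-paper proof to compare with. Your sketch is a correct self-contained proof, apart from one step that needs a line of justification.

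In Step~1, the formula $T(r,e^P)=\int_{\mathbb{C}^n\langle r\rangle}(\operatorname{Re}P)^+\sigma_n$ holds because $e^P$ has no poles; zero-freeness plays no role there. In Step~2 the chain is sound: Poisson bound for the harmonic function $\operatorname{Re}P$ on the ball of radius $2r$ in $\mathbb{R}^{2n}$, then Borel--Carath\'eodory on each line $t\mapsto P(t\zeta)$ uniformly in $\zeta$, then Cauchy estimates on the homogeneous expansion.

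The step you assert without reason is ``$P_d\not\equiv 0$ on the sphere, so $\operatorname{Re}P_d>0$ on a set of positive measure.'' A nonzero function can have nonpositive real part everywhere, so homogeneity is what is needed. Pick $\zeta_0$ with $P_d(\zeta_0)\neq 0$. Since $P_d(e^{i\theta}\zeta_0)=e^{id\theta}P_d(\zeta_0)$ with $d\geq 1$, some rotation makes $P_d$ positive real, and continuity gives an open set. Equivalently, $\int\operatorname{Re}P_d\,\sigma_n=\operatorname{Re}P_d(0)=0$ while $\operatorname{Re}P_d\not\equiv 0$.

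There is also a route that handles both cases with one estimate, using pluriharmonicity of $\operatorname{Re}P$. The mean value property gives
\[\int_{\mathbb{C}^n\langle r\rangle}|\operatorname{Re}P|\,\sigma_n=2T(r,e^P)-\operatorname{Re}P(0).\]
For the homogeneous expansion $P=\sum_k P_k$ one has $r^kP_k(\zeta)=\frac{1}{\pi}\int_0^{2\pi}\operatorname{Re}P(re^{i\theta}\zeta)e^{-ik\theta}\,d\theta$ for $k\geq 1$. Averaging over $\zeta\in\mathbb{C}^n\langle 1\rangle$, using that $\sigma_n$ is invariant under $\zeta\mapsto e^{i\theta}\zeta$, yields
\[r^k\int_{\mathbb{C}^n\langle 1\rangle}|P_k|\,\sigma_n\leq 4T(r,e^P)-2\operatorname{Re}P(0).\]
So every nonzero $P_k$ forces $T(r,e^P)\geq c_k r^k-C$. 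This gives your Step~1 lower bound and the Step~2 contradiction at once, with no positivity argument, no maximum modulus and no Borel--Carath\'eodory. Your version has the merit of using only standard one-variable tools on complex lines.
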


\begin{lem}\label{L.8}\cite[Theorem 2.1]{CX1} Let $f$ be a non-constant meromorphic function in $\mathbb{C}^n$ and let $c\in \mathbb{C}^n$. If
\bea\label{Ss}\limsup\limits_{r\rightarrow \infty} \frac{\log T(r,f)}{r}=0,\eea
then
\beas \parallel\;m\left(r,f(z+c)/f(z)\right)+m\left(r,f(z)/f(z+c)\right)=o(T(r,f)).\eeas
\end{lem}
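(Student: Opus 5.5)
The statement is the several-variable difference analogue of the logarithmic derivative lemma, and I would prove it along the lines of Halburd--Korhonen--Tohge in one variable, replacing the one-variable Poisson--Jensen formula by its analogue on balls in $\mathbb{C}^n$. By symmetry, and since $T(r,f(z+c))$ and $T(r,f)$ are comparable, it suffices to bound $m\left(r,f(z+c)/f(z)\right)$; the reverse quotient is handled by replacing $c$ with $-c$ and $f$ with $f(z+c)$. Write $f=g/h$ with $g,h$ entire and coprime at each point, so that $\log|f|$ is a difference of plurisubharmonic functions whose Riesz measures are the zero and pole divisors of $f$.

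\textbf{Step 1 (pointwise estimate).} Fix $r$ large and $R>r+|c|$. For $z\in\mathbb{C}^n\langle r\rangle$ I would apply the Poisson--Jensen (Green--Riesz) representation of $\log|f|$ on the ball $\mathbb{C}^n(R)$ at the two points $z$ and $z+c$ and subtract. The boundary term gives an integral of $\bigl|\log|f|\bigr|$ over $\mathbb{C}^n\langle R\rangle$ against the difference of Poisson kernels. Since the kernel is Lipschitz in the interior point with constant $O\bigl(|c|R^{2n-1}/(R-r-|c|)^{2n}\bigr)$, this term is bounded by a multiple of $\frac{|c|R^{\kappa}}{(R-r-|c|)^{\kappa}}\bigl(m(R,f)+m(R,1/f)\bigr)$ for some fixed $\kappa=\kappa(n)$. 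The divisor terms are differences of Green potentials $\log|\cdot|$-type kernels evaluated at $z$ and $z+c$. After integrating over the sphere with $\sigma_n$, these are controlled by the same factor times $N(R,f)+N(R,1/f)$, plus the divisor mass lying in the shell between the spheres of radii $r-|c|$ and $r+|c|$.

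\textbf{Step 2 (integration).} Integrating $\log^+|f(z+c)/f(z)|$ over $\mathbb{C}^n\langle r\rangle$ and applying the first main theorem should give
\[
m\Bigl(r,\tfrac{f(z+c)}{f(z)}\Bigr)\le C_n\,\frac{|c|\,R^{\kappa}}{(R-r-|c|)^{\kappa}}\,T(R,f)+C_n\bigl(N(r+|c|,f)-N(r-|c|,f)+N(r+|c|,1/f)-N(r-|c|,1/f)\bigr)+O(1).
\]

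\textbf{Step 3 (growth lemma), which I expect to be the main obstacle.} I would prove a real-variable lemma: if $T$ is nondecreasing and continuous with $\limsup_{r\to\infty}\frac{\log T(r)}{r}=0$, then for every fixed $s>0$ one has $T(r+s)=T(r)+o(T(r))$ outside a set of zero upper density. I would argue by contradiction. If $T(r+s)\ge(1+\varepsilon)T(r)$ on a set of positive upper density, iterate along a sequence $r_k$ with $r_{k+1}\ge r_k+s$, all lying in the set, to get $T(r_k)\ge(1+\varepsilon)^{\delta k}$ with $r_k=O(k)$. This forces $\log T(r)\ge c'r$ along a subsequence, contradicting the hypothesis. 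The same lemma controls the shell counting terms, since $N(r\pm|c|)=N(r)+o(T(r,f))$ outside such a set. For the first term, take $R=r+\delta(r)$ with $\delta(r)\to\infty$ chosen so that both $\frac{|c|R^{\kappa}}{(R-r-|c|)^{\kappa}}\to0$ and $T(r+\delta(r))\le 2T(r)$ hold off a zero-density set. A natural choice is $\delta(r)=r$ at the scale of the kernel, using $T(2r)=O(T(r))$ off an exceptional set. I expect the real difficulty to be checking that the kernel's dependence on $R/(R-r)$ can be made $o(1)$ while $T(R,f)=O(T(r,f))$ still holds outside a set of zero upper density. If the ball-kernel bound is too lossy for this, my fallback is to slice along complex lines through $c$ and apply the one-variable Halburd--Korhonen--Tohge lemma fibrewise, then average over the slices.
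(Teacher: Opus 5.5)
The paper gives no proof of this lemma; it is imported from \cite[Theorem 2.1]{CX1}. Your outline therefore has to stand on its own, and it breaks where you feared.

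\textbf{First gap: the choice of $R$.} Step 3 takes $R=2r$ on the strength of $T(2r,f)=O(T(r,f))$ off a set of zero upper density. This is false under (\ref{Ss}), which allows infinite order. For example, if $\log T(r,f)\sim\sqrt{r}$, then $T(2r,f)/T(r,f)\to\infty$ for every large $r$, so no exceptional set helps. Your growth lemma is correct, after a diagonal argument in $\varepsilon$ to get a single exceptional set. But it only compares $T(r+s)$ with $T(r)$ for bounded $s$, so $R-r$ must stay essentially bounded. With $R-r$ bounded, your pointwise Lipschitz bound for the Poisson kernel $P_R(x,\zeta)=R^{2n-2}(R^2-|x|^2)|x-\zeta|^{-2n}$ is of order $|c|r^{2n-1}/(R-r)^{2n}$, which is huge. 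The boundary term alone can be rescued by averaging in $z$ before estimating. Fubini, the bound $|\nabla_xP_R(x,\zeta)|\le(4n+2)R^{2n-1}|x-\zeta|^{-2n}$, and the identity $\int_{\mathbb{C}^n\langle r\rangle}|x-\zeta|^{-2n}\,\sigma_n=R^{2-2n}(R^2-r^2)^{-1}$ for $|\zeta|=R$ give the bound $O\bigl(|c|/(R-r)\bigr)\bigl(m(R,f)+m(R,1/f)\bigr)$ when $R-r\ge2|c|$. The divisor terms, however, cannot be rescued on the global ball.

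\textbf{Second gap: the divisor terms.} For $n\ge2$ the Green kernel of the ball is $|x-y|^{2-2n}$, not logarithmic. Take a point $y$ of the divisor near $\mathbb{C}^n\langle r\rangle$. The spherical mean of the positive part of $|x-y|^{2-2n}-|x+c-y|^{2-2n}$ is of order $\frac{|c|}{r}\log\frac{r}{|c|}$ times the contribution of $y$ to $n(r)$. The reason is that the dipole tail $|c|\,|x-y|^{1-2n}$, integrated over the sphere between $|x-y|=|c|$ and $|x-y|=r$, produces $\log(r/|c|)$. Estimated term by term, the divisor part is therefore of order $\frac{|c|\log r}{r}\,n(r+s)$, not $N(r+|c|)-N(r-|c|)$ or a multiple of $N(R)$. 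The inequality $n(t)\le(t+1)\bigl(N(t+1)-N(t)\bigr)$ combined with your growth lemma gives only $\frac1r\,n(r+s)=o(T(r,f))$ off a small set. A factor $\log r$ is left over that (\ref{Ss}) cannot absorb. It is absorbed when the hyper-order is $<1$, which is why the Halburd--Korhonen--Tohge scheme works in that setting.

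\textbf{A repair.} Localize. Apply the Riesz (Poisson--Jensen) representation of $\log|f|$ on balls $B(z,\rho)$ of a fixed radius $\rho\ge2|c|$ centred at the points $z\in\mathbb{C}^n\langle r\rangle$, so that the far divisor is absorbed into the harmonic part.
\begin{itemize}
\item After averaging over $z$, the boundary term is $|c|/\rho$ times an average of $\bigl|\log|f|\bigr|$ over the shell $r-\rho<|\zeta|<r+\rho$ with radial density $O(1/\rho)$. It is therefore $O(|c|/\rho)\,T(r+\rho,f)+O(1)$, which is at most $O(|c|/\rho)\,T(r,f)$ off a set of zero upper density.
\item The local divisor terms are $O\bigl(|c|(1+\log\frac{\rho}{|c|})\bigr)\frac1r\bigl(n(r+\rho,f)+n(r+\rho,1/f)\bigr)$. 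Using the inequality above and your growth lemma applied to $N(\cdot,f)$ and $N(\cdot,1/f)$, this is $o(T(r,f))$ off a set of zero upper density.
\end{itemize}
Letting $\rho\to\infty$ diagonally then gives the lemma.

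\textbf{The slicing fallback.} It does not avoid the difficulty. Lines in direction $c$ meet $\mathbb{C}^n\langle r\rangle$ in circles of varying radii. The one-variable lemma has slice-dependent exceptional sets and non-uniform $o(\cdot)$ terms. There is also no Crofton-type formula for parallel slices that would reassemble $T(r,f)$.
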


\begin{lem}\label{L.7}\cite[Theorem 2.2]{CX1} Let $f$ be a non-constant meromorphic function on $\mathbb{C}^n$ such that {\em(\ref{Ss})} holds,
then $\parallel\;T(r,f(z+c))=T(r,f)+o(T(r,f))$
holds for any constant $c\in\mathbb{C}^n$.
\end{lem}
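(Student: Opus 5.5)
The plan is to compare the two parts of the characteristic separately and then symmetrize in $c$. Write $g(z)=f(z+c)$.

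\emph{Proximity part.} Since $\log^+|g|\le \log^+|f|+\log^+|g/f|$, integrating over $\mathbb{C}^n\langle r\rangle$ against $\sigma_n$ gives
\[
m(r,f(z+c))\le m(r,f)+m\left(r,f(z+c)/f(z)\right).
\]
By Lemma~\ref{L.8}, which applies because of the growth condition (\ref{Ss}), the last term is $o(T(r,f))$ outside a set of zero upper density. So $\parallel\, m(r,f(z+c))\le m(r,f)+o(T(r,f))$.

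\emph{Counting part.} Let $\nu=\mu^\infty_f$ and let $\nu_c=\mu^\infty_{g}$ be the pole divisor of $g$; it is $\nu$ translated by $-c$. If $z$ lies in the support of $\nu_c$ with $\|z\|\le t$, then $z+c$ lies in the support of $\nu$ with $\|z+c\|\le t+\|c\|$. Translation preserves $\upsilon_n$, so
\[
n_{\nu_c}(t)\le t^{-2(n-1)}\int_{A[t+\|c\|]}\nu\,\upsilon_n^{n-1}=\Big(\frac{t+\|c\|}{t}\Big)^{2(n-1)}n_\nu(t+\|c\|).
\]
The prefactor tends to $1$. Integrating against $dt/t$, and absorbing bounded and lower-order terms, should give
\[
N(r,f(z+c))\le (1+o(1))\,N(r+\|c\|,f)+O(\log r)\le (1+o(1))\,T(r+\|c\|,f)+O(\log r).
\]

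\emph{Growth lemma.} Adding the two parts yields $T(r,g)\le T(r,f)+o(T(r,f))+[T(r+\|c\|,f)-T(r,f)]$, with the $O(\log r)$ term also absorbed. The main obstacle is a Borel-type lemma. For a nondecreasing function $T$ satisfying $\limsup_{r\to\infty}\log T(r)/r=0$, one needs $T(r+s)=T(r)+o(T(r))$ for fixed $s$, outside a set of zero upper density.

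I would prove it by contradiction. Suppose $T(r+s)\ge (1+\varepsilon)T(r)$ on a set $F$ with positive upper density. Pick $r_1<r_2<\cdots$ in $F$ with $r_{k+1}\ge r_k+s$; positive density allows $\gtrsim \delta r/s$ such points up to $r$. Iterating gives $T(r)\ge (1+\varepsilon)^{c\delta r/s}T(r_0)$. Hence $\log T(r)/r$ stays bounded below by a positive constant along a sequence, contradicting (\ref{Ss}). For the $o(T(r,f))$ term, letting $\varepsilon\to 0$ along a sequence and using a diagonal argument gives a single exceptional set of zero upper density. Transcendence of $f$ (or the standing non-constancy with $T(r,f)\ge C\log r$) absorbs the $O(\log r)$ terms.

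\emph{Symmetrization.} Apply the same argument to $g$ with shift $-c$. This requires checking that $g$ also satisfies (\ref{Ss}), which follows from the bound just obtained. It gives $T(r,f)\le T(r,g)+o(T(r,g))$. Combining the two inequalities, and noting that $T(r,g)$ and $T(r,f)$ are comparable, yields $\parallel\,T(r,f(z+c))=T(r,f)+o(T(r,f))$. The union of finitely many exceptional sets of zero upper density still has zero upper density.
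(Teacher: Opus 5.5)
The paper does not prove this lemma; it is quoted from \cite[Theorem 2.2]{CX1}, so there is no in-paper argument to compare with. Your route is the standard one: logarithmic difference lemma for $m$, ball inclusion for $N$, and a Borel-type growth lemma. Several pieces are fine as they stand: the inclusion $B(c,t)\subset B(0,t+\|c\|)$ with translation invariance of $\upsilon_n$, your iteration proof of the growth lemma, and the diagonal extraction of a single exceptional set.

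\textbf{The combination step fails as written.} From $m(r,g)\le m(r,f)+o(T(r,f))$ and $N(r,g)\le(1+o(1))T(r+\|c\|,f)+\cdots$ you only get
\[
T(r,g)\le m(r,f)+(1+o(1))\,T(r+\|c\|,f)+o(T(r,f)).
\]
After the growth lemma this reads $T(r,g)\le T(r,f)+m(r,f)+o(T(r,f))$, so the proximity function is counted twice. Your displayed inequality is equivalent to $m(r,f)\le m(r+\|c\|,f)+o(T(r,f))$. That is not automatic, since $m(\cdot,f)$ is not monotone. The way to obtain it is precisely to show $N(r+\|c\|,f)-N(r,f)=o(T(r,f))$.

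The repair uses only tools you already have. Do not replace $N(r+\|c\|,f)$ by $T(r+\|c\|,f)$. Instead, apply your growth lemma to the nondecreasing function $r\mapsto N(r,f)$. It satisfies the same growth condition because $N(r,f)\le T(r,f)$; if $N(r,f)$ is bounded, it converges and the difference tends to $0$ trivially. This gives $N(r+\|c\|,f)=N(r,f)+o(T(r,f))$ off a set of zero upper density. Hence
\[
T(r,g)\le m(r,f)+N(r,f)+o(T(r,f))=T(r,f)+o(T(r,f)).
\]
The symmetrized step needs the same correction.

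\textbf{Three smaller points.}

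\emph{The $O(\log r)$ term.} It does not actually arise. Splitting $\int_{r_0}^r$ at a fixed $t_\varepsilon$ gives $N(r,g)\le(1+\varepsilon)^2N(r+\|c\|,f)+C_\varepsilon$, and a constant is $o(T(r,f))$ for every non-constant $f$. This matters because your fallback justification, that $T(r,f)\ge C\log r$ absorbs $O(\log r)$, is wrong for rational $f$, which the lemma covers.

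\emph{Condition (\ref{Ss}) for $g$.} It does not follow from a bound holding only off an exceptional set $E$ without one more line. A set of zero upper density cannot contain $[r,(1+\eta)r]$ for large $r$. Monotonicity of $T(\cdot,g)$ then gives $T(r,g)\le 2T((1+\eta)r,f)$ for all large $r$.

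\emph{Avoiding symmetrization.} You can skip it altogether. Lemma~\ref{L.8} already controls $m(r,f(z)/f(z+c))$. The inclusion $B(0,t)\subset B(c,t+\|c\|)$ gives $N(r-\|c\|,f)\le(1+\varepsilon)^2N(r,g)+C_\varepsilon$. Combine this with the growth lemma for $N(\cdot,f)$.
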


Let $f$ be a non-constant meromorphic function in $\mathbb{C}^n$. Define complex differential-difference polynomials as follows:
\bea\label{cl1} P(f(z))=\sum\limits_{\mathbf{p} \in I} a_{\mathbf{p}}(z) f^{p_0}(z)\big(\partial^{\mathbf{i}_1}(f(z))\big)^{p_1} \cdots\big(\partial^{\mathbf{i}_l}(f(z))\big)^{p_l}f^{p_{l+1}}(z + \hat q_{l+1}) \cdots f^{p_{l+s}}(z + \hat q_{l+s}),\eea
$\mathbf{p}=\left(p_0, \ldots, p_{l+s}\right) \in \mathbb{Z}_{+}^{l+s+1}$,
\bea\label{cl2} Q(f(z))=\sum\limits_{\mathbf{q} \in J} c_{\mathbf{q}}(z) f^{q_0}(z)\big(\partial^{\mathbf{j}_1}(f(z))\big)^{q_1} \cdots\big(\partial^{\mathbf{j}_l}(f(z))\big)^{q_l}f^{q_{l+1}}(z+\tilde q_{l+1}) \cdots f^{q_{l+t}}(z+\tilde q_{l+t}),\eea
$\mathbf{q}=\left(q_0, \ldots, q_{l+t}\right) \in \mathbb{Z}_{+}^{l+t+1}$ and
\bea\label{cl3} B(f(z))=\sideset{}{_{k=0}^{m}}{\sum} b_k(z) f^k(z),\eea
where $I, J$ are finite sets of distinct elements, $a_{\mathbf{p}}, b_k, c_{\mathbf{q}}\in S(f)$ such that $b_n\not\equiv 0$ and $\hat q_i, \tilde q_j\in\mathbb{C}^n$.

For the case, when $P(f(z))$ and $Q(f(z))$ are complex differential polynomials, Hu and Yang \cite[Lemma 2.1]{ps1} generalised Clunie-lemma to high dimension. In 2020, Cao and Xu \cite[Theorem 3.6]{CX1} improved and extended Laine-Yang's difference analogue of Clunie theorem in one variable \cite[Theorem 2.3]{LY1} to high dimension by using Lemma \ref{L.8}. Now by using Lemmas \ref{L.1} and \ref{L.8} and proceeding in the same way as done in the proofs of Lemma 2.1 \cite{ps1} and Theorem 3.6 \cite{CX1}, we get the following lemma.

\begin{lem}\label{L.4}\cite[Theorem 3.6]{CX1}; see also \cite[Lemma 2.1]{ps1}. Let $f$ be a non-constant meromorphic function in $\mathbb{C}^n$ such that {\em(\ref{Ss})} holds.
Suppose $f$ satisfies the complex differential-difference equation $B(f) Q(f)=P(f)$,
where $P(f)$, $Q(f)$ and $B(f)$ are defined as in {\em(\ref{cl1})}, {\em(\ref{cl2})} and {\em(\ref{cl3})} respectively.
If $\deg(P(f)) \leq m=\deg(B(f))$, then $\parallel\;m(r, Q(f))=o(T(r, f))$.
\end{lem}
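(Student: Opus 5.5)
The proof will follow Clunie's classical splitting argument, carried out on the sphere $\mathbb{C}^n\langle r\rangle$. Lemma \ref{L.1} will control the differential quotients $\partial^{\mathbf{i}}(f)/f$, and Lemma \ref{L.8} will control the shift quotients $f(z+\hat q)/f(z)$.

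\textbf{Step 1 (rewriting the polynomials).} We factor the powers of $f$ out of each monomial:
\[
f^{p_0}\big(\partial^{\mathbf{i}_1}f\big)^{p_1}\cdots f^{p_{l+s}}(z+\hat q_{l+s})
= f^{|\mathbf{p}|}\Big(\tfrac{\partial^{\mathbf{i}_1}f}{f}\Big)^{p_1}\cdots\Big(\tfrac{f(z+\hat q_{l+s})}{f}\Big)^{p_{l+s}},
\qquad |\mathbf{p}|=\sum_k p_k .
\]
Call the product of quotients $\Phi_{\mathbf{p}}$, and define $\Psi_{\mathbf{q}}$ in the same way for $Q$. The hypothesis $\deg P(f)\le m$ means exactly that $|\mathbf{p}|\le m$ for all $\mathbf{p}\in I$.

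Next we introduce the auxiliary function
\[
K(z)=\max\Big\{1,\;2\sum_{k=0}^{m-1}\big|b_k(z)/b_m(z)\big|^{1/(m-k)}\Big\}.
\]
Since $b_m\not\equiv0$ and all $b_k\in S(f)$, we have $\int_{\mathbb{C}^n\langle r\rangle}\log^+K\,\sigma_n \le \sum_k O\big(T(r,b_k)\big)+O\big(T(r,b_m)\big)=o(T(r,f))$ outside a set of zero upper density.

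\textbf{Step 2 (pointwise bound on two sets).} We split $\mathbb{C}^n\langle r\rangle$ into $E_1=\{|f|\le K\}$ and $E_2=\{|f|>K\}$.

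On $E_2$ the leading term dominates, so $|B(f)|\ge \tfrac12|b_m||f|^m$. Using $|\mathbf{p}|\le m$ and $|f|>1$, we get
\[
|Q(f)|=\frac{|P(f)|}{|B(f)|}\le \frac{2}{|b_m|}\sum_{\mathbf{p}}|a_{\mathbf{p}}|\,|\Phi_{\mathbf{p}}| .
\]
On $E_1$ we have directly
\[
|Q(f)|\le \sum_{\mathbf{q}}|c_{\mathbf{q}}|\,K^{|\mathbf{q}|}\,|\Psi_{\mathbf{q}}| .
\]
Taking $\log^+$ and using $\log^+\sum_{j=1}^N x_j\le\sum_j\log^+x_j+\log N$ and $\log^+(xy)\le\log^+x+\log^+y$ on each set, we integrate over the sphere. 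This gives
\[
m(r,Q(f))\le \sum m(r,a_{\mathbf{p}})+\sum m(r,c_{\mathbf{q}})+m(r,1/b_m)+O\Big(\int\log^+K\,\sigma_n\Big)+\sum_{\text{quotients}} O\big(m(r,\text{quotient})\big)+O(1).
\]

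\textbf{Step 3 (estimating the terms).} The coefficient terms are $o(T(r,f))$ because the coefficients lie in $S(f)$ and the first main theorem gives $m(r,1/b_m)\le T(r,b_m)+O(1)$. Lemma \ref{L.1} gives $m(r,\partial^{\mathbf{i}}f/f)=O(\log^+T(r,f)+\log^+\log T(r,f))=o(T(r,f))$ outside a set of finite logarithmic measure. Because of the growth hypothesis (\ref{Ss}), Lemma \ref{L.8} gives $m(r,f(z+\hat q)/f(z))=o(T(r,f))$ outside a set of zero upper density. A finite union of such exceptional sets still has zero upper density, so $\parallel\, m(r,Q(f))=o(T(r,f))$.

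\textbf{Main obstacle.} The delicate point is Step 2, specifically the lower bound $|B(f)|\ge\frac12|b_m||f|^m$ on $E_2$ when the coefficients $b_k$ are functions rather than constants. It has to be checked term by term: on $E_2$ we have $|b_k||f|^k\le 2^{-(m-k)}|b_m||f|^m$, and summing over $k<m$ gives at most $\tfrac12|b_m||f|^m$. The other thing to verify is that $\log^+K$ is genuinely small, and this uses $\log^+|b_k/b_m|\le\log^+|b_k|+\log^+|1/b_m|$. Everything else is bookkeeping of the exceptional sets.
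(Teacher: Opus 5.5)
Your proposal is correct and is essentially the argument the paper intends. The paper gives no separate proof; it says to run the Clunie-type argument of Hu--Yang and Cao--Xu, with Lemma~\ref{L.1} controlling $\partial^{\mathbf{i}}f/f$ and Lemma~\ref{L.8} controlling $f(z+\hat q)/f(z)$, which is exactly your splitting of $\mathbb{C}^n\langle r\rangle$ into $\{|f|\le K\}$ and $\{|f|>K\}$.

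One arithmetic slip: $\sum_{k<m}2^{-(m-k)}=1-2^{-m}$, which exceeds $\tfrac12$ when $m\ge 2$, so your stated justification does not give the factor $\tfrac12$. Either settle for $|B(f)|\ge 2^{-m}|b_m||f|^m$, since any positive constant suffices, or recover the factor $\tfrac12$ as follows: with $t_k=|b_k/b_m|^{1/(m-k)}$, use $(t_k/|f|)^{m-k}\le t_k/|f|$ together with $\sum_k t_k<|f|/2$.
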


\begin{lem}\label{L.8a}\cite[Lemma 1.68]{HLY1} Let $f_1:\mathbb{C}^n\to\mathbb{P}^1$ and $f_2:\mathbb{C}^n\to\mathbb{P}^1$ be two non-constant meromorphic functions. Then for $r>0$ we have
\[N(r,0;f_1f_2)-N(r,f_1f_2)=N(r,0;f_1)+N(r,0;f_2)-N(r,f_1)-N(r,f_2).\]
\end{lem}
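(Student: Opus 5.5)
The identity is a statement about divisors, so the plan is to reduce it to additivity of the divisor $\mu_f=\mu^0_f-\mu^\infty_f$ under multiplication, and then use linearity of the valence function $N_\nu(r)$ in $\nu$. By definition, $N(r,0;f)=N_{\mu^0_f}(r)$ and $N(r,f)=N_{\mu^\infty_f}(r)$. Since $n_\nu(t)=t^{-2(n-1)}\int \nu\,\upsilon_n^{n-1}$ is an integral of $\nu$ against a fixed form, $\nu\mapsto n_\nu(t)$ is linear, and so is $\nu\mapsto N_\nu(r)=\int_{r_0}^r n_\nu(t)\,dt/t$. Here the integral is taken over $\operatorname{supp}\nu$; enlarging the domain to the union of the supports changes nothing, because $\nu$ vanishes off its support. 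Hence $N(r,0;f)-N(r,f)=N_{\mu_f}(r)$ for every meromorphic $f\not\equiv0$. The claimed identity is then equivalent to
\[
N_{\mu_{f_1f_2}}(r)=N_{\mu_{f_1}}(r)+N_{\mu_{f_2}}(r),
\]
which follows once we show $\mu_{f_1f_2}=\mu_{f_1}+\mu_{f_2}$ as divisors. Note that $f_1f_2\not\equiv0$, since both factors are non-constant meromorphic functions on the connected space $\mathbb{C}^n$.

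To prove the additivity, I would work locally. Near a point $a$, write $f_i=g_i/h_i$ with $g_i,h_i$ holomorphic and coprime at each point near $a$, as in the paper's setup. Then $f_1f_2=(g_1g_2)/(h_1h_2)$, but this representation need not be coprime. Since the local ring $\mathcal{O}_a$ is a UFD, we can write $g_1g_2=u\,G$ and $h_1h_2=u\,H$ with $G,H$ coprime, where $u$ is the gcd. Then $\mu^0_{f_1f_2}=\mu^0_G$ and $\mu^\infty_{f_1f_2}=\mu^0_H$. It is cleanest to measure divisors by their order along each irreducible hypersurface germ, i.e.\ by the valuation $v_D$. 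In that language $v_D(G)-v_D(H)=v_D(g_1g_2)-v_D(h_1h_2)=v_D(g_1)+v_D(g_2)-v_D(h_1)-v_D(h_2)$. This is exactly $\mu_{f_1}+\mu_{f_2}$ along $D$.

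The main (mild) obstacle is reconciling the pointwise definition of $\mu^0_f(a)$ as the vanishing order at $a$ with this hypersurface-valuation description. It also requires checking that the cancellation of the common factor $u$ and the indeterminacy sets $g_i^{-1}(0)\cap h_i^{-1}(0)$, which have dimension $\le n-2$, do not affect the counting functions. I would handle this by noting two facts. First, a divisor is determined by its values on the regular points of its support, i.e.\ off an analytic set of codimension $\ge2$. Second, $\upsilon_n^{n-1}$ integrates to zero over sets of real dimension $\le 2n-4$. So only generic points of the zero and pole hypersurfaces matter, and there vanishing order coincides with the valuation $v_D$, which is additive. With the divisor identity established, applying the linear map $\nu\mapsto N_\nu(r)$ gives the lemma for every $r>0$ (with the same base point $r_0$ throughout).
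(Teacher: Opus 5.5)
Your proof is correct. The paper does not prove this lemma; it quotes it from \cite[Lemma 1.68]{HLY1}, so there is no in-paper argument to compare against. Your reduction is the natural direct proof from the definitions recalled in the paper's introduction: linearity of $\nu\mapsto N_\nu(r)$ gives $N(r,0;f)-N(r,f)=N_{\mu_f}(r)$, and then $\mu_{f_1f_2}=\mu_{f_1}+\mu_{f_2}$.

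The step you single out as the main obstacle is easier than you make it. The pointwise vanishing order at $a$ is already additive on holomorphic germs. If $u=u_k+\cdots$ and $G=G_l+\cdots$, with $u_k,G_l$ nonzero homogeneous polynomials in $z-a$ of degrees $k,l$, then $uG=u_kG_l+\cdots$, and $u_kG_l\not\equiv 0$ because the polynomial ring is an integral domain. Applying this to $g_1g_2=uG$ and $h_1h_2=uH$ gives
\[
\mu^0_G(a)-\mu^0_H(a)=\mu^0_{g_1}(a)+\mu^0_{g_2}(a)-\mu^0_{h_1}(a)-\mu^0_{h_2}(a)
\]
at every point $a$. This includes indeterminacy points and singular points of the zero and pole sets. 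So the detour through hypersurface valuations and the measure-zero argument for the singular locus is not wrong, just unnecessary.

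A different and equally short route is Jensen's formula. It expresses $N(r,0;f)-N(r,f)$ as $\int_{\mathbb{C}^n\langle r\rangle}\log|f|\,\sigma_n$ minus the same integral over $\mathbb{C}^n\langle r_0\rangle$. The identity then follows at once from $\log|f_1f_2|=\log|f_1|+\log|f_2|$. That route hides the divisor bookkeeping inside Jensen's formula, whereas yours uses only the definitions of $\mu_f$ and $N_\nu$ given in the paper.
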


\begin{lem}\label{L.9} Let $m_1,m_2,n_1,n_2$ be positive integers such that $n_i+m_i>2$ for $i=1,2$. If $(f_1(z),f_2(z))$ is a pair of finite-order entire solution of the system of equations {\em(\ref{pds1})} in $\mathbb{C}^n$ such that $\frac{\partial f_1(z)}{\partial z_1}$ is non-constant, then 
\begin{align}\label{lm.5}
\frac{1}{n_i}+\frac{1}{m_i}\geq 1,
\end{align}
where $i\in\{1,2\}$.
\end{lem}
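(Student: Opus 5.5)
Fix $i\in\{1,2\}$ and write the $i$-th equation of (\ref{pds1}) as $u^{n_i}+v^{m_i}=1$, where $u=\partial_{z_1}f_i$ and $v=f_j(z+c)-f_i(z)$ with $j\neq i$. Both $u$ and $v$ are entire of finite order. The plan is to apply the second main theorem (Lemma~\ref{L.2}) to $u$. The statement assumes only that $\partial_{z_1}f_1$ is non-constant, so I first check that $u$ and $v$ are non-constant for both values of $i$. For $i=1$, $u$ is non-constant by hypothesis, and then $v^{m_1}=1-u^{n_1}$ forces $v$ to be non-constant as well. For $i=2$, suppose $\partial_{z_1}f_2$ were a constant $a$. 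Then $(f_1(z+c)-f_2(z))^{m_2}$ is constant, so $f_1(z+c)-f_2(z)$ is constant. Differentiating in $z_1$ gives $\partial_{z_1}f_1(z+c)=a$, which contradicts the hypothesis. Hence $u$ is non-constant, and then $v$ is too.

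Let $\omega_1,\dots,\omega_{n_i}$ be the $n_i$-th roots of unity. Zeros of $u-\omega_k$ are zeros of $v^{m_i}$, so each such zero has multiplicity at least $m_i$ as a zero of $u^{n_i}-1$. Since the $\omega_k$ are distinct, each zero of $u-\omega_k$ therefore has multiplicity at least $m_i$ in $u-\omega_k$. This gives
\[\ol N(r,\omega_k;u)\le \tfrac{1}{m_i}N(r,\omega_k;u)\le \tfrac{1}{m_i}T(r,u)+O(1).\]
Applying Lemma~\ref{L.2} to $u$ with the $q=n_i+1$ points $\omega_1,\dots,\omega_{n_i},\infty$, and using $\ol N(r,u)=0$ because $u$ is entire, we get
\[\parallel\;(n_i-1)T(r,u)\le \tfrac{n_i}{m_i}T(r,u)+O(\log(rT(r,u))).\]
Since $u$ has finite order, $\log T(r,u)=O(\log r)$.

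If $u$ is transcendental, then $\log r=o(T(r,u))$, so $n_i-1\le n_i/m_i$, i.e.\ $1/n_i+1/m_i\ge 1$ after dividing by $n_i$. If $u$ is a non-constant polynomial, the error term is not negligible and I will argue by degrees instead. Comparing degrees in $u^{n_i}=1-v^{m_i}$ shows $v$ is also a polynomial. The identity $u^{n_i}+v^{m_i}=1$ with polynomials $u,v$ then restricts to a generic complex line, where $u,v$ become one-variable polynomials. Mason--Stothers (or the $abc$ theorem for polynomials) applied to $A=u^{n_i}$, $B=v^{m_i}$, $C=1$ gives $n_i\deg u\le \deg u+\deg v-1$ and likewise for $v$. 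Adding these yields $(n_i-2)\deg u+(m_i-2)\deg v<0$, which forces $\min(n_i,m_i)=1$, and that already implies (\ref{lm.5}). Restricting to a line on which the map is non-constant is routine.

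The main obstacle is the $i=2$ reduction, which transfers non-constancy of $\partial_{z_1}f_1$ to $\partial_{z_1}f_2$, together with the multiplicity bookkeeping at the points $\omega_k$. Both seem manageable. The symmetric choice of points $\{0\text{-roots}\}$ for $v$ gives the same inequality, so it provides a check on the computation.
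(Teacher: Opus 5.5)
Your argument is correct. It reaches the same inequality $n_i-1\le n_i/m_i$ as the paper, but by a somewhat different route.

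\textbf{The paper's route.} The paper does not apply the second main theorem to $F_i=\partial_{z_1}f_i$ directly. Instead it:
\begin{itemize}
\item introduces $h_i=(F_i^{n_i}-1)/F_i^{n_i}$ and uses Lemma~\ref{L.3} to get $T(r,h_i)=n_iT(r,F_i)+O(1)$;
\item applies Lemma~\ref{L.2} to $h_i$ with the values $0,1,\infty$, whose preimages are the zeros of $G_i=f_j(z+c)-f_i(z)$, the (nonexistent) poles of $F_i$, and the zeros of $F_i$;
\item bounds $\ol N(r,0;F_i)\le T(r,F_i)$ and $\ol N(r,0;G_i)\le T(r,G_i)$;
\item converts $T(r,G_i)$ into $\frac{n_i}{m_i}T(r,F_i)$ by applying Lemma~\ref{L.3} to the equation itself.
\end{itemize}
In effect the paper uses the $n_i+2$ targets $0,\omega_1,\dots,\omega_{n_i},\infty$ for $F_i$. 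It then gives back the target $0$ through the trivial bound on $\ol N(r,0;F_i)$, so the bookkeeping matches yours.

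\textbf{Your route.} You apply Lemma~\ref{L.2} to $u$ with only $\omega_1,\dots,\omega_{n_i},\infty$. You use the local fact that every $\omega_k$-point of $u$ has multiplicity at least $m_i$, since the other factors $u-\omega_l$ are units there. This avoids both the auxiliary function and the characteristic-function identity between $F_i$ and $G_i$.

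\textbf{What your route buys.} The paper absorbs the error $O(\log(rT(r,h_i)))$ into $o(T(r,h_i))$. That tacitly assumes $F_i$ is transcendental. The lemma, however, assumes only finite-order entire solutions, and $\partial_{z_1}f_i$ can be a polynomial even when $f_i$ is transcendental (e.g.\ $f_i=z_1^2+e^{z_2}$). Your separate Mason--Stothers argument on a generic line closes exactly this case.

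One small wording fix there. That $v$ is a polynomial does not follow from ``comparing degrees.'' It follows from $v$ being entire with $v^{m_i}$ a polynomial. For instance, $T(r,v)=\frac{1}{m_i}T(r,v^{m_i})=O(\log r)$, or a Cauchy-estimate argument.
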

\begin{proof}
Let $(f_1(z),f_2(z))$ be a pair of finite-order non-constant entire solutions of (\ref{pds1}) in $\mathbb{C}^n$. We first verify that $\frac{\partial f_2}{\partial z_1}$ is non-constant. If $\frac{\partial f_2}{\partial z_1}=b$ were constant, then the second equation of (\ref{pds1}) would imply that
\[
\left(f_1(z+c)-f_2(z)\right)^{m_2}=1-b^{n_2}.
\]
Hence $f_1(z+c)-f_2(z)$ would be constant. Differentiating with respect to $z_1$ would give
$\frac{\partial f_1(z+c)}{\partial z_1}=b$, contradicting the assumed non-constancy of $\frac{\partial f_1}{\partial z_1}$. Set
\begin{align}\label{lm} 
F_i(z)=\frac{\partial f_i(z)}{\partial z_1}\;\;\text{and}\;\;G_i(z)=f_j(z+c)-f_i(z),
\end{align}
 where $i,j\in\{1,2\}$ such that $i\neq j$. Let
\begin{align}\label{lm.3} 
h_i(z)=\frac{F_i^{n_i}(z)-1}{F_i^{n_i}(z)},
\end{align}
where $i\in\{1,2\}$. Clearly $h_i(z)$ is a non-constant meromorphic function in $\mathbb{C}^n$. Using Lemma \ref{L.3} to (\ref{lm.3}), we get
\begin{align}\label{lm.4}
\parallel T(r,h_i)+o(T(r,h_i))=n_iT(r,F_i)+o(T(r,F_i)),
\end{align}
where $i\in\{1,2\}$. Now using (\ref{pds1}) to (\ref{lm.3}), one can easily deduce that
\begin{align*}
\parallel\;\ol N(r,h_i)\leq \ol N(r,0, F_i^{n_i})=\ol N(r,0,F_i)+o(T(r,F_i)),
\end{align*}
\begin{align*}
\parallel\;\ol N(r,0,h_i)=\ol N(r,1,F_i^{n_i})\leq \ol N(r,0,G_i^{m_i})=\ol N(r,0,G_i)+o(T(r,G_i))
\end{align*}
and $\parallel \ol N(r,1,h_i)=0$,
where $i,j\in\{1,2\}$. Therefore in view of the second main theorem and using Lemma \ref{L.2} and (\ref{lm.4}), we get
\begin{align}\label{lm.6} 
\parallel\;n_i T(r,F_i)&=T(r,h_i)+o(T(r,h_i))\\&\leq \ol N(r,h_i)+\ol N(r,0,h_i)+\ol N(r,1,h_i)+o(T(r,h_i))\nonumber\\&\leq
\ol N(r,0,F_i)+\ol N(r,0,G_i)+o(T(r,F_i))+o(T(r,G_i))\nonumber\\&\leq
T(r,F_i)+T(r,G_i)+o(T(r,F_i))+o(T(r,G_i)),\nonumber
\end{align}
where $i,j\in\{1,2\}$. Also using Lemma \ref{L.3} to the system of equations (\ref{pds1}), we get
\begin{align}\label{lm.0} 
\parallel\;n_i T(r,F_i)+o(T(r,F_i))=m_i T(r,G_i)+o(T(r,G_i)),
\end{align}
where $i\in\{1,2\}$. Using (\ref{lm.0}) to (\ref{lm.6}), we get
\begin{align*}
\parallel\;\left(n_i-1-\frac{n_i}{m_i}\right)T(r,F_i)\leq o(T(r,F_i)),
\end{align*}
where $i\in\{1,2\}$ and so for $i\in\{1,2\}$, we have
\begin{align*}
\frac{1}{n_i}+\frac{1}{m_i}\geq 1.
\end{align*}
\end{proof}

\section {{\bf Proof of Theorem \ref{t2.0}}}
\begin{proof}Let $(f_1(z),f_2(z))$ be a pair of finite order entire solutions to the system of equations (\ref{pds1}) in $\mathbb{C}^n$ such that $\frac{\partial f_1(z)}{\partial z_1}$ is constant. We know that $n_i+m_i>2$ for $i=1,2$. Consequently from (\ref{pds1}), we have respectively
	\begin{align}\label{T0.1} 
		\left(\frac{\partial f_1(z)}{\partial z_1}\right)^{n_1}+\left(f_2(z+c)-f_1(z)\right)^{m_1}=1
	\end{align}
	and 
	\begin{align}\label{T0.2}
		\left(\frac{\partial f_2(z)}{\partial z_1}\right)^{n_2}+\left(f_1(z+c)-f_2(z)\right)^{m_2}=1.
	\end{align}

Suppose $\frac{\partial f_1(z)}{\partial z_1}=a$. Then \eqref{T0.1} yields
\begin{align}\label{T0.3}
\left(f_2(z+c)-f_1(z)\right)^{m_1}=1-a^{n_1}.
\end{align}	
	
Thus the entire function $f_2(z+c)-f_1(z)$ is constant. Differentiating it with respect to $z_1$ gives
\[
\frac{\partial f_2(z+c)}{\partial z_1}-a=0,
\]
and hence $\frac{\partial f_2(z)}{\partial z_1}=a$. On integration, we get
\begin{align}\label{T0.4}
	f_1(z)=az_1+g_1(z_2,\ldots,z_n)\; \text{and}\; f_2(z)=az_1+g_2(z_2,\ldots,z_n), 
\end{align}
where $g_1(z_2,\ldots,z_n)$ and $g_2(z_2,\ldots,z_n)$ are finite order entire functions in $\mathbb{C}^{n-1}$.
Now using (\ref{T0.4}) to (\ref{T0.1}) and (\ref{T0.2}), we get respectively
\begin{align*}
\left(ac_1+g_2(z_2+c_2,\ldots,z_n+c_n)-g_1(z_2,\ldots,z_n)\right)^{m_1}=1-a^{n_1}
\end{align*}
and
\begin{align*}
\left(ac_1+g_1(z_2+c_2,\ldots,z_n+c_n)-g_2(z_2,\ldots,z_n)\right)^{m_2}=1-a^{n_2}.
\end{align*}

Conversely, direct substitution shows that every pair in the statement satisfying the two displayed relations solves (\ref{pds1}).

\end{proof}

\section {{\bf Proof of Theorem \ref{t2.1}}}
\begin{proof}Let $(f_1(z),f_2(z))$ be a pair of finite-order entire solutions to the system of equations (\ref{pds1}) in $\mathbb{C}^n$ such that $\frac{\partial f_1(z)}{\partial z_1}$ is non-constant. The argument at the beginning of the proof of Lemma~\ref{L.9} shows that $\frac{\partial f_2(z)}{\partial z_1}$ is also non-constant.
We have $n_i=m_i$ for $i=1,2$. Since $n_i+m_i>2$, from (\ref{lm.5}), we get $n_i=m_i=2$ for $i=1,2$. Consequently from (\ref{pds1}), we have respectively
\begin{align}\label{T1.1} 
\left(\frac{\partial f_1(z)}{\partial z_1}\right)^2+\left(f_2(z+c)-f_1(z)\right)^2=1
\end{align}
and 
\begin{align}\label{T1.2}
\left(\frac{\partial f_2(z)}{\partial z_1}\right)^2+\left(f_1(z+c)-f_2(z)\right)^2=1.
\end{align}

Applying Theorem D to (\ref{T1.1}), we have
\begin{align}\label{T1.3} 
\frac{\partial f_1(z)}{\partial z_1}=\cos P_1(z)
\end{align}
and
\begin{align}\label{T1.4} 
f_2(z+c)-f_1(z)=\sin P_1(z),
\end{align}
where $P_1(z)$ is an entire function in $\mathbb{C}^n$.	Again applying Theorem D to (\ref{T1.2}), we obtain
\begin{align}\label{T1.5} 
\frac{\partial f_2(z)}{\partial z_1}=\cos P_2(z)
\end{align}
and
\begin{align}\label{T1.6} 
f_1(z+c)-f_2(z)=\sin P_2(z),
\end{align}
where $P_2(z)$ is an entire function in $\mathbb{C}^n$.
As $\rho(f_1)<+\infty$, using Lemma \ref{L.1}, we get
\begin{align*}
T\left(r,\frac{\partial f_1(z)}{\partial z_1}\right)=m\left(r, \frac{\partial f_1(z)}{\partial z_1}\right)\leq m(r,f_1)+O(\log r)\leq T(r,f_1)+O(\log r),
\end{align*}
which shows that $\rho\left(\frac{\partial f_1(z)}{\partial z_1}\right)<+\infty$.  Now using Lemmas \ref{L.3} and \ref{L.5} to (\ref{T1.3}), we conclude that $P_1(z)$ is a polynomial in $\mathbb{C}^n$. Similarly we can prove that $P_2(z)$ is a polynomial in $\mathbb{C}^n$.

Differentiating (\ref{T1.4}) with respect to $z_1$ and then using (\ref{T1.3}) and (\ref{T1.5}), we have
\begin{align}\label{T1.7} 
\cos P_2(z+c)-\cos P_1(z)=\frac{\partial P_1(z)}{\partial z_1}\cos P_1(z),
\end{align}
which implies that $\frac{\partial P_1(z)}{\partial z_1}\not\equiv-1$. 
Now, from (\ref{T1.7}), we get 
\begin{align}\label{T1.8} 
\left(\frac{\partial P_1(z)}{\partial z_1}+1\right)e^{\iota (P_1(z)+P_2(z+c))} +\left(\frac{\partial P_1(z)}{\partial z_1}+1\right)e^{-\iota (P_1(z)-P_2(z+c))}-e^{2\iota P_2(z+c)}=1.
\end{align}

Similarly differentiating (\ref{T1.6}) with respect to $z_1$ and then using (\ref{T1.3}) and (\ref{T1.5}), we get
\begin{align}\label{T1.9} 
\left(\frac{\partial P_2(z)}{\partial z_1}+1\right)e^{\iota (P_2(z)+P_1(z+c))} +\left(\frac{\partial P_2(z)}{\partial z_1}+1\right)e^{-\iota (P_2(z)-P_1(z+c))}-e^{2\iota P_1(z+c)}=1,
\end{align}
where 
\begin{align}\label{T1.10} 
\cos P_1(z+c)-\cos P_2(z)=\frac{\partial P_2(z)}{\partial z_1}\cos P_2(z)
\end{align}
and 
$\frac{\partial P_2(z)}{\partial z_1}\not\equiv -1$. Now using Lemma \ref{L.6} to (\ref{T1.8}) and (\ref{T1.9}), we have respectively
\begin{align}\label{T1.11}
 \text{either}\;\;\left(\frac{\partial P_1(z)}{\partial z_1}+1\right)e^{\iota (P_1(z)+P_2(z+c))}=1 \;\;\text{or}\;\;\left( \frac{\partial P_1(z)}{\partial z_1}+1\right)e^{-\iota (P_1(z)-P_2(z+c))}=1
 \end{align}
and
\begin{align}\label{T1.12}
 \text{either}\;\;\left(\frac{\partial P_2(z)}{\partial z_1}+1\right)e^{\iota (P_2(z)+P_1(z+c))}=1 \;\;\text{or}\;\; \left(\frac{\partial P_2(z)}{\partial z_1}+1\right)e^{-\iota (P_2(z)-P_1(z+c))}=1.
 \end{align}

We know that $\frac{\partial f_1(z)}{\partial z_1}$ is non-constant. Then from (\ref{T1.3}), we see that $P_1(z)$ is non-constant. Consequently from (\ref{T1.11}), we conclude that $P_2(z)$ is also non-constant. We now consider the following four cases.

\medskip
{\bf Case 1.}
Let
\begin{align}\label{T1.15} 
\begin{cases}  \left(\frac{\partial P_1(z)}{\partial z_1}+1\right)e^{\iota (P_1(z)+P_2(z+c))}=1\\ \left(\frac{\partial P_2(z)}{\partial z_1}+1\right)e^{\iota (P_2(z)+P_1(z+c))}=1.\end{cases}
\end{align}
	 
It follows from (\ref{T1.15}) that $P_1(z+c)+P_2(z)$ and $P_2(z+c)+P_1(z)$ are both constants. Consequently $P_1(z+c)+P_2(z+c)+P_1(z)+P_2(z)$ is a constant and so $P_1(z)+P_2(z)$ is also a constant. Since $P_1(z+c)+P_2(z)$, $P_2(z+c)+P_1(z)$ and $P_1(z)+P_2(z)$ are constants it follows that $P_1(z+c)-P_1(z)$ and $P_2(z+c)-P_2(z)$ are both constants.
Assume that
\begin{align}\label{T1.16} 
	e^{-\iota (P_1(z)+P_2(z+c))}=A\;\;\text{and}\;\;e^{-\iota (P_2(z)+P_1(z+c))}=B.
\end{align}

\begin{figure}[H]
	\centering
	
	\begin{tikzpicture}[
		x=1cm,
		y=1cm,
		every node/.style={font=\small},
		box/.style={
			draw,
			rounded corners=4pt,
			align=center,
			minimum width=4.8cm,
			minimum height=1cm,
			inner sep=4pt
		},
		arr/.style={
			-{Latex[length=2.3mm]},
			thick
		}
		]
		
		
		\node[box] (A1) at (-4,0)
		{$P_1(z+c)+P_2(z)=C_1$};
		
		\node[box] (A2) at (4,0)
		{$P_2(z+c)+P_1(z)=C_2$};
		
		
		\node[box] (B1) at (0,-2)
		{$P_1(z)+P_2(z)=C_3$};
		
		
		\node[box] (C1) at (-4,-4)
		{$P_1(z+c)-P_1(z)=C_4$};
		
		\node[box] (C2) at (4,-4)
		{$P_2(z+c)-P_2(z)=C_5$};
		
		
		\node[box,
		minimum width=7cm,
		minimum height=1.2cm] (D1) at (0,-6.3)
		{Translation structure of $P_1$ and $P_2$};
		
		
		\draw[arr] (A1.south) -- (B1.north);
		
		\draw[arr] (A2.south) -- (B1.north);
		
		\draw[arr] (B1.south west) -- (C1.north);
		
		\draw[arr] (B1.south east) -- (C2.north);
		
		\draw[arr] (C1.south east) -- (D1.north west);
		
		\draw[arr] (C2.south west) -- (D1.north east);
		
	\end{tikzpicture}

	\caption{
		Logical dependence of the shift relations obtained
		from (\ref{T1.15})--(\ref{T1.16}).
	}

	\label{fig:rigidityflow}
	
\end{figure}
\FloatBarrier
Then from (\ref{T1.15}) and (\ref{T1.16}), we have
\begin{align*}
\frac{\partial P_1(z)}{\partial z_1}+1=A.
\end{align*}

Obviously $P_1(z)=(A-1)z_1+\tilde Q_1(z_2,\ldots,z_n)$, where $\tilde Q_1(z_2,\ldots,z_n)$ is a polynomial in $\mathbb{C}^{n-1}$. Since $P_1(z+c)-P_1(z)$ is a constant, it follows that $\tilde Q_1(z_2+c_2,\ldots,z_n+c_n)-\tilde Q_1 (z_2,\ldots,z_n)$ is also a constant. Therefore we may assume that
\begin{align}\label{T1.17}
P_1(z)=(A-1)z_1+A_{12}z_2+\ldots+A_{1n}z_n+\hat Q_1(z_2,\ldots,z_n),
\end{align}
where $A_{1i}\in\mathbb{C}$ for $i=2,\ldots,n$ and $\hat Q_1(z_2,\ldots,z_n)$ is a polynomial in $\mathbb{C}^{n-1}$ such that
$\hat Q_1(z_2+c_2,\ldots,z_n+c_n)=\hat Q_1(z_2,\ldots,z_n)$. Since $P_1(z)+P_2(z)$ is a constant, from (\ref{T1.17}), we have 
\begin{align*}
P_2(z)=-(A-1)z_1-A_{12}z_2-\ldots-A_{1n}z_n-\hat Q_1(z_2,\ldots,z_n)+C_1,
\end{align*}
where $C_1\in\mathbb{C}$. We consider the following two sub-cases.

\medskip
{\bf Sub-case 1.1.} Let $A=1$.  
Now from (\ref{T1.17}), we have $\frac{\partial P_1(z)}{\partial z_1}\equiv 0$ and so $\frac{\partial P_2(z)}{\partial z_1}\equiv 0$. Consequently from (\ref{T1.7}) and (\ref{T1.10}),  we have respectively
\begin{align}\label{T1.13}
	\cos P_2(z+c)\equiv \cos P_1(z)\quad \text{and}\quad \cos P_1(z+c)\equiv \cos P_2(z).
\end{align}

On integration, we get respectively from (\ref{T1.3}) and (\ref{T1.5}) that
\begin{align}\label{T1.14}
	f_1(z)=z_1\cos P_1(z)+g_1(z_2,\ldots,z_n)\; \text{and}\; f_2(z)=z_1\cos P_2(z)+g_2(z_2,\ldots,z_n), 
\end{align}
where $g_1(z_2,\ldots,z_n)$ and $g_2(z_2,\ldots,z_n)$ are finite order entire functions in $\mathbb{C}^{n-1}$.

Now using (\ref{T1.13}) and (\ref{T1.14}) in (\ref{T1.4}), we get 
\begin{align*}
	(z_1+c_1)\cos P_1(z)-z_1\cos P_1(z)+g_2(z_2+c_2,\ldots,z_n+c_n)-g_1(z_2,\ldots,z_n)=\sin P_1(z),
\end{align*}
i.e., 
\begin{align*}
	g_2(z_2+c_2,\ldots,z_n+c_n)-g_1(z_2,\ldots,z_n)=\sin P_1(z)-c_1\cos P_1(z).
\end{align*}
Similarly  using (\ref{T1.13}) and (\ref{T1.14}) in (\ref{T1.6}), we get 
\begin{align*}
	g_1(z_2+c_2,\ldots,z_n+c_n)-g_2(z_2,\ldots,z_n)=\sin P_2(z)-c_1\cos P_2(z).
\end{align*}

Therefore, we have
\begin{align*}
	f_1(z)=z_1\cos P_1(z)+g_1(z_2,\ldots,z_n)\; \text{and}\; f_2(z)=z_1\cos P_2(z)+g_2(z_2,\ldots,z_n), 
\end{align*}
where $g_1(z_2,\ldots,z_n)$ and $g_2(z_2,\ldots,z_n)$ are finite order entire functions in $\mathbb{C}^{n-1}$ such that 
\begin{align*}
	g_i(z_2+c_2,\ldots,z_n+c_n)-g_j(z_2,\ldots,z_n)=\sin P_j(z)-c_1\cos P_j(z),
\end{align*}
where $i,j\in\{1,2\}$ such that $i\neq j$.

\medskip
{\bf Sub-case 1.2.} Let $A\neq 1$. Note that $P_1(z)+P_2(z)=C_1\in\mathbb{C}$. Then from (\ref{T1.3}) and (\ref{T1.5}), we have respectively
\begin{align*}
\frac{\partial f_1(z)}{\partial z_1}=\frac{1}{A-1}\frac{\partial \sin P_1(z)}{\partial z_1}\quad \text{and}\quad \frac{\partial f_2(z)}{\partial z_1}=-\frac{1}{A-1}\frac{\partial \sin (-P_1(z)+C_1)}{\partial z_1}. 
\end{align*}
On integration, we have
\begin{equation}\label{T1.20}
\begin{aligned}
f_1(z)&=\frac{1}{A-1}\sin P_1(z)+\tilde g_1(z_2,\ldots,z_n),\\
f_2(z)&=-\frac{1}{A-1}\sin (-P_1(z)+C_1)+\tilde g_2(z_2,\ldots,z_n).
\end{aligned}
\end{equation}
where $\tilde g_1(z_2,\ldots,z_n)$ and $\tilde g_2(z_2,\ldots,z_n)$ are finite order entire functions in $\mathbb{C}^{n-1}$.
Since $P_1(z+c)-P_1(z)=C_2\in\mathbb{C}$, from (\ref{T1.20}), we have
\begin{align}\label{T1.21}
f_2(z+c)=-\frac{1}{A-1}\sin (-P_1(z)+C_0)+\tilde g_2(z_2+c_2,\ldots,z_n+c_n),
\end{align}
where $C_0=C_1-C_2$. 
Now using (\ref{T1.20}) and (\ref{T1.21}) to (\ref{T1.4}), we obtain
\begin{align*}
-\frac{1}{A-1}\sin (-P_1(z)+C_0)-\frac{A}{A-1}\sin P_1(z)+\tilde g_2(z_2+c_2,\ldots,z_n+c_n)-\tilde g_1(z_2,\ldots,z_n)=0,
\end{align*}
i.e., 
\begin{align}\label{T1.22}
&\left(-\frac{d}{A-1}+\frac{A}{A-1}\right)e^{-iP_1(z)}+\left(\frac{1}{d(A-1)}-\frac{A}{A-1}\right)e^{iP_1(z)}\\&+2\iota\tilde g_2(z_2+c_2,\ldots,z_n+c_n)-2\iota\tilde g_1(z_2,\ldots,z_n)=0,\nonumber
\end{align}
where $d=e^{iC_0}$. 
\begin{figure}[htbp]
	\centering
	
	\begin{tikzpicture}[
		x=0.95cm,
		y=1.15cm,
		>=Latex,
		font=\small
		]
		
		
		\colorlet{waveA}{blue!70!black}
		\colorlet{waveB}{red!70!black}
		\colorlet{shiftcol}{violet!80!black}
		\colorlet{boxcol}{green!50!black}
		
		
		\draw[->,thick]
		(-0.3,0)--(10.8,0)
		node[right] {$z_1$};
		
		\draw[->,thick]
		(0,-1.8)--(0,1.8);
		
		
		\def\Czero{1.3}
		
		
		\draw[
		thick,
		waveA,
		domain=0:10,
		samples=300,
		smooth
		]
		plot (\x,{sin((180/pi)*\x)});
		
		
		\draw[
		thick,
		dashed,
		waveB,
		domain=0:10,
		samples=300,
		smooth
		]
		plot (\x,{sin((180/pi)*(-\x+\Czero))});
		
		
		\node[waveA]
		at (2.1,1.2)
		{$\sin P_1(z)$};
		
		\node[waveB]
		at (6.8,1.2)
		{$\sin(-P_1(z)+C_0)$};
		
		
		\draw[densely dotted,shiftcol]
		(\Czero,0)--(\Czero,-1.15);
		
		\draw[<->,thick,shiftcol]
		(0,-1.0)--(\Czero,-1.0);
		
		\node[shiftcol]
		at ({0.5*\Czero},-1.32)
		{$C_0$};
		
		
		\fill[black] (0,0) circle (1.6pt);
		
		\fill[black] (3.93,0.71) circle (1.6pt);
		
		\fill[black] (7.07,-0.71) circle (1.6pt);
		
		
		\node[
		draw=boxcol,
		fill=green!6,
		rounded corners=5pt,
		align=center,
		text=boxcol,
		minimum width=5cm,
		minimum height=1cm
		]
		at (8.2,-1.35)
		{
			Matching oscillatory phases\\
			force coefficient cancellation
		};
		
		
		\draw[->,thick,boxcol]
		(5.8,0.15)--(6.7,-0.7);
		
	\end{tikzpicture}
	
	\caption{
		Phase interaction in Sub-case~1.2.
		The identity
		\(
		P_1(z+c)-P_1(z)=C_2
		\)
		creates two sinusoidal terms with the same frequency but different phase.
		Comparing these terms gives (\ref{T1.23}) and hence \(A=-1\).
	}
	
	\label{fig:phaseinteraction}
	
\end{figure}
\FloatBarrier
Now from (\ref{T1.22}), we deduce that 
\begin{align}\label{T1.23}
-\frac{d}{A-1}+\frac{A}{A-1}=0\quad \text{and} \quad \frac{1}{d(A-1)}-\frac{A}{A-1}=0
\end{align}
and 
\begin{align}\label{T1.24}
\tilde g_2(z_2+c_2,\ldots,z_n+c_n)\equiv \tilde g_1(z_2,\ldots,z_n).
\end{align}
Clearly from (\ref{T1.23}), we conclude that $d=-1$ and so $A=-1$.  Now, \bea\label{e3.23}f_1(z +c)=\frac{1}{A-1}\sin (P_1(z)+C_2)+\tilde g_1(z_2+c_2,\ldots,z_n+c_n).\eea Using (\ref{T1.20}), (\ref{e3.23}) in (\ref{T1.6}), and comparing the two exponential coefficients, we get $e^{\iota \tilde{C_0}}=-1$ and \bea\label{e3.24} \tilde g_1(z_2+c_2,\ldots,z_n+c_n)-\tilde g_2(z_2,\ldots,z_n)=0,\eea where $\tilde{C_0}=C_1+C_2$.
Finally, from (\ref{T1.17}), (\ref{T1.20}) and (\ref{T1.24}), (\ref{e3.24}) we have
\begin{align*}
f_1(z)=-\frac{1}{2}\sin P(z)+g_1(z_2,\ldots,z_n)\; \text{and}\; f_2(z)=\frac{1}{2}\sin (-P(z)+C)+ g_2(z_2,\ldots,z_n), 
\end{align*}
where $P(z)=-2z_1+A_{12}z_2+\ldots+A_{1n}z_n+ Q(z_2,\ldots,z_n)$, $A_{1i},C\in\mathbb{C}$ for $i=2,\ldots,n$ and $Q(z_2,\ldots,z_n)$ is a polynomial in $\mathbb{C}^{n-1}$ such that
$Q(z_2+c_2,\ldots,z_n+c_n)=Q(z_2,\ldots,z_n)$ and $e^{\iota(P(z)+C-P(z+c))}=-1, \; e^{\iota (-P(z)+C+P(z+c))}=-1$ and $e^{2\iota C }=1$,
$g_1(z_2,\ldots,z_n)$ and $g_2(z_2,\ldots,z_n)$ are finite-order entire functions in $\mathbb{C}^{n-1}$ such that $g_i(z_2+c_2,\ldots,z_n+c_n)\equiv  g_j(z_2,\ldots,z_n)$, $i,j=1,2$ and $i\ne j$.

\medskip
{\bf Case 2.} Let
\begin{align}\label{T1.25} 
\begin{cases} \left( \frac{\partial P_1(z)}{\partial z_1}+1\right)e^{\iota (P_1(z)+P_2(z+c))}=1\\ \left(\frac{\partial P_2(z)}{\partial z_1}+1\right)e^{-\iota (P_2(z)-P_1(z+c))}=1.\end{cases}
\end{align}
	 
Clearly from (\ref{T1.25}), it follows that $P_1(z)+P_2(z+c)$ and $P_2(z)-P_1(z+c)$ are both constants. This means $P_2(z+2c)+P_2(z)$ is also a constant, which contradicts the fact that $P_2(z)$ is a non-constant polynomial.\par

\medskip
{\bf Case 3.} Let
\begin{align}\label{T1.26} 
\begin{cases} \left(\frac{\partial P_1(z)}{\partial z_1}+1\right)e^{-\iota (P_1(z)-P_2(z+c))}=1\\ \left(\frac{\partial P_2(z)}{\partial z_1}+1\right)e^{\iota (P_2(z)+P_1(z+c))}=1.\end{cases}
\end{align}
	 
We deduce from (\ref{T1.26}) that $P_1(z)-P_2(z+c)$ and $P_2(z)+P_1(z+c)$ are both constants. This means $P_1(z+2c)+P_1(z)$ is also a constant, which is a contradiction.\par

\medskip
{\bf Case 4.} Let
\begin{align}\label{T1.27}
\begin{cases} \left( \frac{\partial P_1(z)}{\partial z_1}+1\right)e^{-\iota (P_1(z)-P_2(z+c))}=1\\ \left(\frac{\partial P_2(z)}{\partial z_1}+1\right)e^{-\iota (P_2(z)-P_1(z+c))}=1.\end{cases}
\end{align}
	 
Clearly from (\ref{T1.27}), it follows that $P_1(z)-P_2(z+c)$ and $P_2(z)-P_1(z+c)$ are both constants. Consequently $P_1(z+2c)-P_1(z)$ and $P_2(z+2c) -P_2(z)$ are also constants. Assume that
\begin{align}\label{T1.28} 
e^{\iota (P_1(z)-P_2(z+c))}=\tilde A\;\;\text{and}\;\;e^{\iota (P_2(z)-P_1(z+c))}=\tilde B.
\end{align}
	 
Then from (\ref{T1.27}) and (\ref{T1.28}), we have
$\frac{\partial P_1(z)}{\partial z_1}+1=\tilde A$ and $\frac{\partial P_2(z)}{\partial z_1}+1=\tilde B$.
On integration, we have
$P_1(z)=(\tilde A-1)z_1+\tilde Q_1(z_2,\ldots,z_n)$ and 
$P_2(z)=(\tilde B-1)z_1+\tilde Q_2(z_2,\ldots,z_n)$,
where $\tilde Q_i(z_2,\ldots,z_n)$ is a polynomial in $\mathbb{C}^{n-1}$ for $i=1,2$. Since $P_i(z+2c)-P_i(z)$ is a constant, it follows that $\tilde Q_i(z_2+2c_2,\ldots,z_n+2c_n)-\tilde Q_i(z_2,\ldots,z_n)$ is also a constant for $i=1,2$. Therefore we may assume that
\begin{align}\label{T1.29}\begin{cases}
P_1(z)=(\tilde A-1)z_1+A_{12}z_2+\ldots+A_{1n}z_n+\hat Q_1(z_2,\ldots,z_n),\\
P_2(z)=(\tilde B-1)z_1+B_{12}z_2+\ldots+B_{1n}z_n+\hat Q_2(z_2,\ldots,z_n).
\end{cases}
\end{align}
where $A_{1i},B_{1i}\in\mathbb{C}$ for $i=2,\ldots,n$ and $\hat Q_i(z_2,\ldots,z_n)$ is a polynomial in $\mathbb{C}^{n-1}$ such that
$\hat Q_i(z_2+2c_2,\ldots,z_n+2c_n)=\hat Q_i(z_2,\ldots,z_n)$ for $i=1,2$. Since $P_1(z)-P_2(z+c)$ is a constant, it follows from (\ref{T1.29}) that $\tilde A=\tilde B$. We consider the following two sub-cases.
	 
\medskip
{\bf Sub-case 4.1.} Let $\tilde A=1$. Clearly  $\frac{\partial P_1(z)}{\partial z_1}\equiv 0$ and so $\frac{\partial P_2(z)}{\partial z_1}\equiv 0$. Consequently from (\ref{T1.7}) and (\ref{T1.10}), we arrive at (\ref{T1.13}). Proceeding in the same way as done in the proof of Sub-case 1.1, we obtain
$f_1(z)=z_1\cos P_1(z)+g_1(z_2,\ldots,z_n)$ and $f_2(z)=z_1\cos P_2(z)+g_2(z_2,\ldots,z_n)$, 
where $P_1(z)=A_{12}z_2+\ldots+A_{1n}z_n+Q_1(z_2,\ldots,z_n)$ and $P_2(z)=B_{12}z_2+\ldots+B_{1n}z_n+Q_2(z_2,\ldots,z_n)$,
$A_{1i},B_{1i}\in\mathbb{C}$ for $i=2,\ldots,n$ and $Q_i(z_2,\ldots,z_n)$ is a polynomial in $\mathbb{C}^{n-1}$ such that $Q_i(z_2+2c_2,\ldots,z_n+2c_n)=Q_i(z_2,\ldots,z_n)$ for $i=1,2$, $P_1(z)-P_2(z+c)$ is a constant, $g_1(z_2,\ldots,z_n)$ and $g_2(z_2,\ldots,z_n)$ are finite-order entire functions in $\mathbb{C}^{n-1}$ such that 
\begin{align*}
g_i(z_2+c_2,\ldots,z_n+c_n)-g_j(z_2,\ldots,z_n)=\sin P_j(z)-c_1\cos P_j(z),
\end{align*}
where $i,j\in\{1,2\}$ such that $i\neq j$.
	 
\medskip
{\bf Sub-case 4.2.} Let $\tilde A\neq 1$.
Then from (\ref{T1.3}) and (\ref{T1.5}), we have respectively
\begin{align*}
\frac{\partial f_1(z)}{\partial z_1}=\frac{1}{\tilde A-1}\frac{\partial \sin P_1(z)}{\partial z_1}\quad \text{and}\quad \frac{\partial f_2(z)}{\partial z_1}=\frac{1}{\tilde A-1}\frac{\partial \sin P_2(z)}{\partial z_1}. 
\end{align*}
On integration, we have
\begin{align}\label{T1.30}
f_1(z)=\frac{1}{\tilde A-1}\sin P_1(z)+\tilde g_1(z_2,\ldots,z_n)\; \text{and}\; f_2(z)=\frac{1}{\tilde A-1}\sin P_2(z)+\tilde g_2(z_2,\ldots,z_n), 
\end{align}
where $\tilde g_1(z_2,\ldots,z_n)$ and $\tilde g_2(z_2,\ldots,z_n)$ are finite-order entire functions in $\mathbb{C}^{n-1}$. Since $P_1(z)-P_2(z+c)$ is a constant, from (\ref{T1.30}), we have
\begin{align}\label{T1.31}
f_2(z+c)=\frac{1}{\tilde A-1}\sin (P_1(z)+C_0)+\tilde g_2(z_2+c_2,\ldots,z_n+c_n),
\end{align}
where $C_0\in\mathbb{C}$. Now proceeding in the same way as done in the proof of Sub-case 1.2, we obtain
\begin{align*}
f_1(z)=-\frac{1}{2}\sin P_1(z)+g_1(z_2,\ldots,z_n)\; \text{and}\; f_2(z)=-\frac{1}{2}\sin P_2(z)+ g_2(z_2,\ldots,z_n), 
\end{align*}
where $P_1(z)=-2z_1+A_{12}z_2+\ldots+A_{1n}z_n+ Q_1(z_2,\ldots,z_n)$ and $P_2(z)=-2z_1+B_{12}z_2+\ldots+B_{1n}z_n+ Q_2(z_2,\ldots,z_n)$, $A_{1i},B_{1i}\in\mathbb{C}$ for $i=2,\ldots,n$ and $Q_i(z_2,\ldots,z_n)$ is a polynomial in $\mathbb{C}^{n-1}$ such that
$Q_i(z_2+2c_2,\ldots,z_n+2c_n)=Q_i(z_2,\ldots,z_n)$, $i=1,2$ and $P_1(z)-P_2(z+c)$ is a constant,
$e^{\iota(P_i(z)-P_j(z+c))}=-1$ for $i,j\in\{1,2\}$ with $i\ne j$, and
$g_1(z_2,\ldots,z_n)$ and $g_2(z_2,\ldots,z_n)$ are finite-order entire functions in $\mathbb{C}^{n-1}$ such that
$g_i(z_2+c_2,\ldots,z_n+c_n)\equiv g_j(z_2,\ldots,z_n)$ for $i,j\in\{1,2\}$ with $i\ne j$.

Conversely, substitution of each of the four families, together with its stated phase and shift relations, verifies (\ref{pds1}).
\end{proof}

\section {{\bf Proof of Theorem \ref{t2.2}}}
\begin{proof} Let $(f_1(z),f_2(z))$ be a pair of finite-order entire solutions to the system of equations (\ref{pds1}) in $\mathbb{C}^n$ such that $\frac{\partial f_1(z)}{\partial z_1}$ is non-constant. As in the proof of Lemma~\ref{L.9}, $\frac{\partial f_2(z)}{\partial z_1}$ is also non-constant.
In this case, we have $n_i+m_i>2$ for $i=1,2$, and $n_i\neq m_i$ for at least one $i\in\{1,2\}$. 

We consider the following three cases.\par

\begin{figure}[ht] \centering \resizebox{\textwidth}{!}{%
\begin{tikzpicture}[ node distance=1.2cm and 0.8cm, every node/.style={font=\small}, case/.style={rectangle, draw, rounded corners, minimum width=2.2cm, minimum height=0.7cm, align=center}, subcase/.style={rectangle, draw, minimum width=2cm, minimum height=0.6cm, align=center, font=\footnotesize}, result/.style={rectangle, draw, dashed, minimum width=1.8cm, minimum height=0.5cm, align=center, font=\scriptsize}, arrow/.style={->, >=stealth} ] 
	  \node[case] (start) {$(n_1, m_1, n_2, m_2)$\\with $n_i + m_i > 2$}; \node[case, below left=1.5cm and 2cm of start] (case1) {Case 1\\$n_1 = m_1 = 2$}; \node[case, below=.9cm of start] (case2) {Case 2\\$m_1 > n_1$}; \node[case, below right=1.5cm and 2cm of start] (case3) {Case 3\\$m_1 < n_1$};
	  \node[subcase, below left=2cm and 0.2cm of case1] (sc11) {Sub-case 1.1\\$m_2 < n_2$}; \node[subcase, below right=2cm and 0.2cm of case1] (sc12) {Sub-case 1.2\\$m_2 > n_2$}; 
	  \node[subcase, below left=.65cm and 0.01cm of case2] (sc21) {Sub-case 2.1\\$m_2 \neq n_2$}; \node[subcase, below right=.65cm and 0.01cm of case2] (sc22) {Sub-case 2.2\\$n_2 = m_2 = 2$}; 
		  \node[subcase, below left=2cm and 0.2cm of case3] (sc31) {Sub-case 3.1\\$m_2 \neq n_2$}; \node[subcase, below right=2cm and 0.2cm of case3] (sc32) {Sub-case 3.2\\$n_2 = m_2 = 2$}; 
			  \draw[arrow] (start) -- (case1); \draw[arrow] (start) -- (case2); \draw[arrow] (start) -- (case3); \draw[arrow] (case1) -- (sc11); \draw[arrow] (case1) -- (sc12); \draw[arrow] (case2) -- (sc21); \draw[arrow] (case2) -- (sc22); \draw[arrow] (case3) -- (sc31); \draw[arrow] (case3) -- (sc32); \end{tikzpicture}} \caption{Case analysis in the proof of Theorem \ref{t2.2}; the branches record the exponent relations.} \label{fig:proof-structure} \end{figure}

\medskip
{\bf Case 1.} Let $n_1= m_1$. Clearly $n_2\neq m_2$. Note that $n_1+m_1>2$ and so (\ref{lm.5}) gives $n_1=m_1=2$. Since $m_1=n_1=2$, from (\ref{pds1}),  we get
\begin{align}\label{lm.7} 
\left(\frac{\partial f_1(z)}{\partial z_1}\right)^2+\left(f_2(z+c)-f_1(z)\right)^2=1.
\end{align}

Now we consider the following two sub-cases.\par

\medskip
{\bf Sub-case 1.1.} Let $m_2<n_2$. In this case, from (\ref{lm.5}), we obtain $m_2=1$ and $n_2>1$. Then from (\ref{pds1}), we get
\begin{align}\label{lm.8} 
\left(\frac{\partial f_2(z)}{\partial z_1}\right)^{n_2}+f_1(z+c)-f_2(z)=1.
\end{align}

Clearly (\ref{lm.8}) gives
\begin{align*}
 n_2\left(\frac{\partial f_2(z)}{\partial z_1}\right)^{n_2-1}\frac{\partial^2 f_2(z)}{\partial z_1^2}+\frac{\partial f_1(z+c)}{\partial z_1}-\frac{\partial f_2(z)}{\partial z_1}=0
 \end{align*}
and so from (\ref{lm.7}), we get
\begin{align}\label{lm.8a}
\left(\frac{\partial f_2(z)}{\partial z_1}-n_2\left(\frac{\partial f_2(z)}{\partial z_1}\right)^{n_2-1}\frac{\partial^2 f_2(z)}{\partial z_1^2}\right)^2+\left(f_2(z+2c)-f_1(z+c)\right)^2=1.
\end{align}

Using (\ref{lm.8}) to (\ref{lm.8a}), we obtain
\begin{align}\label{lm.9}
\left(\frac{\partial f_2(z)}{\partial z_1}-n_2\left(\frac{\partial f_2(z)}{\partial z_1}\right)^{n_2-1}\frac{\partial^2 f_2(z)}{\partial z_1^2}\right)^2+\left(f_2(z+2c)+\left(\frac{\partial f_2(z)}{\partial z_1}\right)^{n_2}-f_2(z)-1\right)^2=1.
\end{align}

Applying Theorem D to (\ref{lm.9}), we obtain
\begin{align}\label{lm.10} 
\frac{\partial f_2(z)}{\partial z_1}-n_2\left(\frac{\partial f_2(z)}{\partial z_1}\right)^{n_2-1}\frac{\partial^2 f_2(z)}{\partial z_1^2}=\cos h(z)
\end{align}
and
\begin{align}\label{lm.11} 
f_2(z+2c)+\left(\frac{\partial f_2(z)}{\partial z_1}\right)^{n_2}-f_2(z)-1=\sin h(z),
\end{align}
where $h:\mathbb{C}^n\to \mathbb{C}$ is an entire function. Note that $\rho\left(\frac{\partial f_2(z)}{\partial z_1}\right)<+\infty$ and so $\rho\left(\frac{\partial^2 f_2(z)}{\partial z_1^2}\right)<+\infty$. Now using Lemmas \ref{L.3} and \ref{L.5} to (\ref{lm.10}), we conclude that $h(z)$ is a polynomial in $\mathbb{C}^n$. Clearly from (\ref{lm.11}), we obtain
\begin{align*}
\frac{\partial f_2(z+2c)}{\partial z_1}+n_2\left(\frac{\partial f_2(z)}{\partial z_1}\right)^{n_2-1}\frac{\partial^2 f_2(z)}{\partial z_1^2}-\frac{\partial f_2(z)}{\partial z_1}=\frac{\partial h(z)}{\partial z_1}\cos h(z)
\end{align*}
and so from (\ref{lm.10}), we get
\begin{align}\label{lm.12}
\frac{\partial f_2(z+2c)}{\partial z_1}=\frac{\partial h(z)}{\partial z_1}\cos h(z)+\cos h(z).
\end{align}

Therefore from (\ref{lm.10}) and (\ref{lm.12}), we have
\begin{align} &\label{lm.13}n_2\left(1+\frac{\partial h(z)}{\partial z_1}\right)^{n_2-1}\left(\frac{\partial^2 h(z)}{\partial z_1^2}\cos h(z)-\left(1+\frac{\partial h(z)}{\partial z_1}\right)\frac{\partial h(z)}{\partial z_1}\sin h(z)\right)\cos^{n_2-1} h(z)\nonumber\\&=\left(1+\frac{\partial h(z)}{\partial z_1}\right) \cos h(z)-\cos h(z+2c).
\end{align}

Clearly $\frac{\partial h(z)}{\partial z_1}\not\equiv 0,-1$. Note that the sets of multiple zeros of both $\cos h(z)$ and $\cos h(z+2c)$ are algebraic. Let
\begin{align}\label{lm.13a}
H(z)=\frac{\left(1+\frac{\partial h(z)}{\partial z_1}\right) \cos h(z)-\cos h(z+2c)}{\cos h(z)}.
\end{align}

Note that
\begin{align*}
\cos h(z)=\frac{e^{2\iota h(z)}+1}{2e^{\iota h(z)}}
\end{align*}
and so by Lemma \ref{L.3}, we get $T(r,\cos h(z))=2T\left(r, e^{\iota h(z)}\right)+O(1)$. Thereby $o(T(r,\cos h(z)))$ can be replaced by $o\left(T\left(r, e^{\iota h(z)}\right)\right)$.

\medskip
First we suppose that $H\equiv 0$. Then from (\ref{lm.13}), we have 
\begin{align*}
\frac{\partial^2 h(z)}{\partial z_1^2}\cos h(z)-\left(1+\frac{\partial h(z)}{\partial z_1}\right)\frac{\partial h(z)}{\partial z_1}\sin h(z)\equiv 0,
\end{align*}
which shows that $\frac{\partial^2 h(z)}{\partial z_1^2}\not\equiv 0$ and so
\begin{align*}
\left(\frac{\partial^2 h(z)}{\partial z_1^2}+i\left(1+\frac{\partial h(z)}{\partial z_1}\right)\frac{\partial h(z)}{\partial z_1}\right)e^{\iota h(z)}+\left(\frac{\partial^2 h(z)}{\partial z_1^2}-i\left(1+\frac{\partial h(z)}{\partial z_1}\right)\frac{\partial h(z)}{\partial z_1}\right)\equiv 0,
\end{align*}
which implies that
\begin{align*}
\frac{\partial^2 h(z)}{\partial z_1^2}+i\left(1+\frac{\partial h(z)}{\partial z_1}\right)\frac{\partial h(z)}{\partial z_1}\equiv 0\;\;\text{and}\;\;\frac{\partial^2 h(z)}{\partial z_1^2}-i\left(1+\frac{\partial h(z)}{\partial z_1}\right)\frac{\partial h(z)}{\partial z_1}\equiv 0.
\end{align*}

Consequently, we have $\left(1+\frac{\partial h(z)}{\partial z_1}\right)\frac{\partial h(z)}{\partial z_1}\equiv 0$, which is absurd.

\medskip
Next we suppose $H\not\equiv 0$.
Now from (\ref{lm.13}), we see that if $z_0\in\mathbb{C}^n$ is a zero of $\cos h(z)$ of multiplicity $p_0$, then $z_0$ is a zero of $\left(1+\frac{\partial h(z)}{\partial z_1}\right) \cos h(z)-\cos h(z+2c)$ of multiplicity at least $(n_2-1)p_0\geq 1$. Since the sets of multiple zeros of both $\cos h(z)$ and $\cos h(z+2c)$ are algebraic, from (\ref{lm.13a}), we deduce that $N(r,H)=O(\log r)$. Using Lemma \ref{L.8} to (\ref{lm.13a}), we obtain
\begin{align*}
T(r,H)=N(r,H)+m(r,H)=O(\log r)+o(T(r,\cos h(z)))=o(T(r,\cos h(z))),
\end{align*}
which shows that $H$ is a small function of $\cos h(z)$. Now from (\ref{lm.13}) and (\ref{lm.13a}), we get
\begin{align}\label{lm.13b} n_2\left(1+\frac{\partial h(z)}{\partial z_1}\right)^{n_2-1}\left( \frac{\partial^2 h(z)}{\partial z_1^2}\cos h(z)-\left(1+\frac{\partial h(z)}{\partial z_1}\right)\frac{\partial h(z)}{\partial z_1}\sin h(z)\right)\cos^{n_2-2} h(z)=H(z).
\end{align}

We consider the following two sub-cases.

\medskip
{\bf Sub-case 1.1.1.} Let $n_2\geq 3$. Clearly the set of multiple zeros of $\cos^{n_2-1} h(z)$ is not algebraic. Also from (\ref{lm.13b}), we deduce that $N(r,0;\cos h(z))=o(T(r,\cos h(z)))$.  Since $N(r,0;\cos h(z))=o(T(r,\cos h(z)))$, it follows that $N\left(r,1;e^{2\iota h(z)}\right)=o\left(T\left(r, e^{\iota h(z)}\right)\right)$. Now using Lemma \ref{L.2} for the function $e^{2\iota h(z)}$,  we get a contradiction. 

\medskip
{\bf Sub-case 1.1.2.} Let $n_2=2$. Then from (\ref{lm.13b}), we have
\begin{align} \label{lm.14} 
2\left(1+\frac{\partial h(z)}{\partial z_1}\right) \frac{\partial^2 h(z)}{\partial z_1^2} \cos h(z)-2\left(1+\frac{\partial h(z)}{\partial z_1}\right)^2\frac{\partial h(z)}{\partial z_1} \sin h(z)=H(z),
\end{align}
where $H(z)$ is a small function of $\cos h(z)$. Since $T(r,\cos h(z))=2T\left(r, e^{\iota h(z)}\right)+O(1)$, it follows that $H(z)$ is a small function of $e^{\iota h(z)}$.

\medskip
First we suppose that $\frac{\partial^2 h(z)}{\partial z_1^2}\equiv 0$. Clearly from (\ref{lm.14}), we get
\begin{align} \label{lm.14a} 
-2\left(1+\frac{\partial h(z)}{\partial z_1}\right)^2\frac{\partial h(z)}{\partial z_1}\left(e^{2ih(z)}-1\right)=2iH(z)e^{\iota h(z)}.
\end{align}

Since $H(z)$ is a small function of $e^{\iota h(z)}$, it follows from (\ref{lm.14a}) that $N\left(r,1;e^{2\iota h(z)}\right)=o\left(T\left(r, e^{\iota h(z)}\right)\right)$. Now using Lemma \ref{L.2} for the function $e^{2\iota h(z)}$,  we get a contradiction. 

\medskip
Next we suppose that $\frac{\partial^2 h(z)}{\partial z_1^2}\not\equiv 0$. By simple calculations on (\ref{lm.14}), we get
\begin{align}\label{lm.15} 
\tilde G_1(z)e^{2ih(z)}-H(z)e^{\iota h(z)}+\tilde G_2(z)\equiv 0,
\end{align}
where
\begin{align*}
\tilde G_1(z)=\left(1+\frac{\partial h(z)}{\partial z_1}\right) \frac{\partial^2 h(z)}{\partial z_1^2}+i\left(1+\frac{\partial h(z)}{\partial z_1}\right)^2\frac{\partial h(z)}{\partial z_1}
\end{align*}
and
\begin{align*}
\tilde G_2(z)=-\left(\left(1+\frac{\partial h(z)}{\partial z_1}\right) \frac{\partial^2 h(z)}{\partial z_1^2}-i\left(1+\frac{\partial h(z)}{\partial z_1}\right)^2\frac{\partial h(z)}{\partial z_1}\right).
\end{align*}

Since $\tilde G_1(z)$, $\tilde G_2(z)$ and $H(z)\not\equiv 0$ are small functions of $e^{\iota h(z)}$, using Lemma \ref{L.3} to (\ref{lm.15}), we get a contradiction.

\medskip
{\bf Sub-case 1.2.} Let $m_2>n_2$. In this case, from (\ref{lm.5}), we obtain $m_2>1$ and $n_2=1$. Then from (\ref{pds1}), we get
\begin{align}\label{lmm.1} 
\frac{\partial f_2(z)}{\partial z_1}+\left(f_1(z+c)-f_2(z)\right)^{m_2}=1.
\end{align}

Now using Theorem D to (\ref{lm.7}), we get
\begin{align}\label{lmm.2} 
\frac{\partial f_1(z)}{\partial z_1}=\cos h(z)
\end{align}
and
\begin{align}\label{lmm.3} 
f_2(z+c)-f_1(z)=\sin h(z),
\end{align}
where $h:\mathbb{C}^n\to \mathbb{C}$ is a polynomial. Now using (\ref{lmm.2}) to (\ref{lmm.3}), we get
\begin{align}\label{lmm.4}
\frac{\partial f_2(z+c)}{\partial z_1}=\left(1+\frac{\partial h(z)}{\partial z_1}\right) \cos h(z).
\end{align}

Again using (\ref{lmm.4}) to (\ref{lmm.1}), we obtain
\begin{align}\label{lmm.5}
\left(f_1(z+2c)-f_2(z+c)\right)^{m_2}=1-\left(1+\frac{\partial h(z)}{\partial z_1}\right) \cos h(z).
\end{align}

First we suppose
$\frac{\partial h(z)}{\partial z_1}\equiv -1$. Clearly $h(z)$ depends upon the variable $z_1$ and so from (\ref{lmm.2}), we conclude that $f_1(z)$ and $\frac{\partial f_1(z)}{\partial z_1}$ must depend upon the variable $z_1$. On the other hand, from (\ref{lmm.4}), we deduce that $f_2(z+c)$ must be independent of the variable $z_1$. Now differentiating (\ref{lmm.5}) partially with respect to $z_1$ and then using (\ref{lmm.2}) and (\ref{lmm.4}), we get
\begin{align*}
m_2\left(f_1(z+2c)-f_2(z+c)\right)^{m_2-1}\cos h(z+2c)\equiv 0,
\end{align*}
which shows that $f_1(z+2c)\equiv f_2(z+c)$. Since $f_1(z)$ depends on the variable $z_1$, whereas $f_2(z+c)$ is independent of the variable $z_1$, we get a contradiction.

\medskip
Next we suppose $\frac{\partial h(z)}{\partial z_1}\not\equiv -1$. Clearly from (\ref{lmm.5}), we have
\begin{align}\label{lmm.6}
\left(f_1(z+2c)-f_2(z+c)\right)^{m_2}=-\frac{g(z)}{2e^{\iota h(z)}},
\end{align}
where 
\begin{align}\label{lmm.7}
g(z)=\left(1+\frac{\partial h(z)}{\partial z_1}\right)e^{2\iota h(z)}-2e^{i h(z)}+\left(1+\frac{\partial h(z)}{\partial z_1}\right).
\end{align}

Obviously
\begin{align}\label{lmm.8}
\frac{\partial g(z)}{\partial z_1}=\left(\frac{\partial^2 h(z)}{\partial z_1^2}+2i \frac{\partial h(z)}{\partial z_1}\left(1+\frac{\partial h(z)}{\partial z_1}\right)\right)e^{2ih(z)}-2i\frac{\partial h(z)}{\partial z_1} e^{\iota h(z)}+\frac{\partial^2 h(z)}{\partial z_1^2}.
\end{align}

\medskip
Suppose $\frac{\partial h(z)}{\partial z_1}\equiv 0$. Clearly $h(z)$ is independent upon the variable $z_1$. On integration, we get from (\ref{lmm.2}) that $f_1(z)=z_1\cos h(z)+\phi(z_2,z_3,\ldots,z_n)$, where $\phi(z_2,z_3,\ldots,z_n)$ is a finite-order entire function in $\mathbb{C}^{n-1}$. Clearly from (\ref{lmm.3}), we have $f_2(z+c)=z_1\cos h(z)+\sin h(z)+\phi(z_2,z_3,\ldots,z_n)$. Now using (\ref{lmm.7}) to (\ref{lmm.6}), we obtain
\begin{align*}
\left(f_1(z+2c)-f_2(z+c)\right)^{m_2}=2\sin^2 \frac{1}{2}h(z),
\end{align*}	 
which shows that $m_2=2$ and so
$f_1(z+2c)-f_2(z+c)=\pm \sqrt{2}\sin \frac{1}{2}h(z)$. Since 
$f_1(z)=z_1\cos h(z)+\phi(z_2,z_3,\ldots,z_n)$ and $f_2(z+c)=z_1\cos h(z)+\sin h(z)+\phi(z_2,z_3,\ldots,z_n)$, we have 
\begin{align*}
&(z_1+2c_1)\cos h(z+2c)+\phi(z_2+2c_2,\ldots,z_n+2c_n)-
z_1\cos h(z)-\sin h(z)-\phi(z_2,\ldots,z_n)\\&=\pm \sqrt{2}\sin \frac{1}{2}h(z).
\end{align*}

\medskip
Suppose $\frac{\partial h(z)}{\partial z_1}\not\equiv 0$. Let $z_0\in\mathbb{C}^n$ be a zero of $g(z)$ such that $z_0$ is not a zero of the coefficients of the right hand sides of $g(z)$ and $\frac{\partial g(z)}{\partial z_1}$.
Since $m_2\geq 2$, from (\ref{lmm.6}), we deduce that $z_0$ is multiple zero of $g(z)$ and so $z_0$ is also a zero of $\frac{\partial g(z)}{\partial z_1}$. Clearly 
\begin{align}\label{lmm.9}
\left(1+\frac{\partial h(z_0)}{\partial z_1}\right)e^{2i h(z_0)}-2e^{i h(z_0)}+\left(1+\frac{\partial h(z_0)}{\partial z_1}\right)=0
\end{align}
and
\begin{align}\label{lmm.10}
\left(\frac{\partial^2 h(z_0)}{\partial z_1^2}+2i \frac{\partial h(z_0)}{\partial z_1}\left(1+\frac{\partial h(z_0)}{\partial z_1}\right)\right)e^{2ih(z_0)}-2i\frac{\partial h(z_0)}{\partial z_1} e^{ih(z_0)}+\frac{\partial^2 h(z_0)}{\partial z_1^2}=0.
\end{align}

Eliminating $e^{2ih(z_0)}$ from (\ref{lmm.9}) and (\ref{lmm.10}), we obtain
\begin{align}\label{lmm.11}
\left(\frac{\partial^2 h(z_0)}{\partial z_1^2}+i\frac{\partial h(z_0)}{\partial z_1}\left(1+\frac{\partial h(z_0)}{\partial z_1}\right)\right)e^{ih(z_0)}=i\frac{\partial h(z_0)}{\partial z_1}\left(1+\frac{\partial h(z_0)}{\partial z_1}\right)^2.
\end{align}
	
Again eliminating $e^{ih(z_0)}$ from (\ref{lmm.9}) and (\ref{lmm.10}), we obtain
\begin{align}\label{lmm.12}
\left(\frac{\partial^2 h(z_0)}{\partial z_1^2}+i\frac{\partial h(z_0)}{\partial z_1}\left(1+\frac{\partial h(z_0)}{\partial z_1}\right)\right)e^{2ih(z_0)}=-\left(\frac{\partial^2 h(z_0)}{\partial z_1^2}-i\frac{\partial h(z_0)}{\partial z_1}\left(1+\frac{\partial h(z_0)}{\partial z_1}\right)\right).
\end{align}

Finally from (\ref{lmm.11}) and (\ref{lmm.12}), we get
\begin{align}\label{lmm.13}
\left(\frac{\partial^2 h(z_0)}{\partial z_1^2}\right)^2=\left(\frac{\partial h(z_0)}{\partial z_1}\right)^3 \left(1+\frac{\partial h(z_0)}{\partial z_1}\right)^2\left(\frac{\partial h(z_0)}{\partial z_1}+2\right).
\end{align}
	
\medskip
First suppose $\left(\frac{\partial^2 h(z)}{\partial z_1^2}\right)^2\not\equiv \left(\frac{\partial h(z)}{\partial z_1}\right)^3 \left(1+\frac{\partial h(z)}{\partial z_1}\right)^2\left(\frac{\partial h(z)}{\partial z_1}+2\right)$. Then from (\ref{lmm.13}), we can deduce that the set of zeros of $g(z)$ is algebraic. If we use Lemmas \ref{L.2} and \ref{L.3} to (\ref{lmm.7}), then we immediately get a contradiction.
	
\medskip
Next suppose $\left(\frac{\partial^2 h(z)}{\partial z_1^2}\right)^2\equiv \left(\frac{\partial h(z)}{\partial z_1}\right)^3 \left(1+\frac{\partial h(z)}{\partial z_1}\right)^2\left(\frac{\partial h(z)}{\partial z_1}+2\right)$. Since $\frac{\partial h(z)}{\partial z_1}\not\equiv 0,-1$, one can easily conclude that $\frac{\partial h(z)}{\partial z_1}\equiv -2$. Now from (\ref{lmm.2}), we have 
\begin{align*}
\frac{\partial f_1(z)}{\partial z_1}=\frac{\partial (-\frac{1}{2} \sin h(z))}{\partial z_1}
\end{align*}
and so on integration, we have $f_1(z)=-\frac{1}{2}\sin h(z)+\psi(z_2,z_3,\ldots,z_n)$, where $\psi(z_2,z_3,\ldots,z_n)$ is a finite-order entire function in $\mathbb{C}^{n-1}$. Therefore from (\ref{lmm.3}), we have $f_2(z+c)=\frac{1}{2}\sin h(z)+\psi(z_2,z_3,\ldots,z_n)$. Now using (\ref{lmm.7}) to (\ref{lmm.6}), we obtain
\begin{align*}
\left(f_1(z+2c)-f_2(z+c)\right)^{m_2}=2\cos^2 \frac{1}{2}h(z),
\end{align*}	 
which shows that $m_2=2$ and so
$f_1(z+2c)-f_2(z+c)=\pm \sqrt{2}\cos \frac{1}{2}h(z)$. Since 
$f_1(z)=-\frac{1}{2}\sin h(z)+\psi(z_2,z_3,\ldots,z_n)$ and $f_2(z+c)=\frac{1}{2}\sin h(z)+\psi(z_2,z_3,\ldots,z_n)$, we have 
\begin{align*}
-\frac{1}{2}\sin h(z+2c)+\psi(z_2+2c_2,\ldots,z_n+2c_n)
-\frac{1}{2}\sin h(z)-\psi(z_2,\ldots,z_n)=\pm \sqrt{2}\cos \frac{1}{2}h(z).
\end{align*}

\medskip
{\bf Case 2.} Let $m_1>n_1$. In this case, from (\ref{lm.5}), we obtain $m_1>1$ and $n_1=1$ and so from (\ref{pds1}), we get
\begin{align}\label{lnm.1} 
F_1(z)+G_1^{m_1}(z)=1,
\end{align}
where $F_1$ and $G_1$ are defined in (\ref{lm}).
We now consider the following sub-cases.

\medskip
{\bf Sub-case 2.1.} Let $m_2\neq n_2$. In this case, $G_i(z)$ and $f_j(z)$ share $0$ CM and $\rho(G_i/f_j)<1$, where $i,j\in\{1,2\}$ such that $i\neq j$. Clearly 
\begin{align*}
G_i(z)=e^{P_j(z)}f_j(z),
\end{align*}
where $P_j(z)$ is a polynomial in $\mathbb{C}^n$ and $i,j\in\{1,2\}$ such that $i\neq j$. Now using Lemma \ref{L.5}, we deduce that $P_j(z)$ is a constant. Therefore, we may assume that 
\begin{align}\label{lnm.2}
G_i(z)=d_jf_j(z),
\end{align}
where $i,j\in\{1,2\}$ such that $i\neq j$. We now consider the following sub-cases.

\medskip
{\bf Sub-case 2.1.1.} Let $n_2>m_2$ such that $n_2>2$ and $m_1>\frac{n_2}{n_2-2}$. Clearly (\ref{lm.5}) implies $n_2>1$ and $m_2=1$ and so from (\ref{pds1}), we get
\begin{align}\label{lnm.3} 
F_2^{n_2}(z)+G_2(z)=1.
\end{align}

In view of (\ref{lnm.2}) and by a simple calculation, (\ref{lnm.1}) and (\ref{lnm.3}) give
\begin{align}\label{lnm.4}
-n_2F_2^{n_2-1}(z)\frac{\partial F_2(z)}{\partial z_1}+d_1d_2^{m_1}f_2^{m_1}(z)=1.
\end{align}

Let $\phi(z)=-n_2F_2^{n_2-1}(z)\frac{\partial F_2(z)}{\partial z_1}$. Now using Lemmas \ref{L.1}, \ref{L.3} and \ref{L.8a}, we have
\begin{align*}
 T(r,F_2^{n_2}(z))=m(r,F_2^{n_2}(z))&=m\left(r,\phi(z)\frac{F_2(z)}{\frac{\partial F_2(z)}{\partial z_1}}\right)\\&\leq
m(r,\phi(z))+m\left(r,\frac{F_2(z)}{\frac{\partial F_2(z)}{\partial z_1}}\right)\\&\leq
T(r,\phi(z))+T\left(r,\frac{F_2(z)}{\frac{\partial F_2(z)}{\partial z_1}}\right)-N\left(r,\frac{F_2(z)}{\frac{\partial F_2(z)}{\partial z_1}}\right)+O(1)\\&\leq
T(r,\phi(z))+T\left(r,\frac{\frac{\partial F_2(z)}{\partial z_1}}{F_2(z)}\right)-N\left(r,\frac{F_2(z)}{\frac{\partial F_2(z)}{\partial z_1}}\right)+O(1)\\&\leq
T(r,\phi)+N\left(r,\frac{\frac{\partial F_2(z)}{\partial z_1}}{F_2(z)}\right)-N\left(r,\frac{F_2(z)}{\frac{\partial F_2(z)}{\partial z_1}}\right)+o(T(r,F_2))\\&\leq
T(r,\phi)+N(r,0;F_2)-N\left(r,0;\frac{\partial F_2(z)}{\partial z_1}\right)+o(T(r,F_2)),\nonumber
\end{align*}
i.e.,
\begin{align}\label{lnm.5}
 (n_2-1)T(r,F_2)\leq T(r,\phi)-N\left(r,0;\frac{\partial F_2(z)}{\partial z_1}\right)+o(T(r,F_2)).
 \end{align}

Therefore using Lemma \ref{L.2}, (\ref{lnm.4}) to (\ref{lnm.5}), we obtain
\begin{align}\label{lnm.6} 
(n_2-1)T(r,F_2)&\leq \ol N(r,0;\phi)+\ol N(r,1;\phi)-N\left(r,0;\frac{\partial F_2(z)}{\partial z_1}\right)+o(T(r,F_2))\\&\leq\ol N(r,0;F_2)+\ol N(r,0;f_2(z+2c))+o(T(r,F_2))\nonumber\\&\leq
T(r,F_2)+T(r,f_2(z+2c))+o(T(r,F_2)).\nonumber
\end{align}

On the other hand, applying Lemmas \ref{L.1} and \ref{L.3} to (\ref{lnm.4}), we get
\begin{align}\label{lnm.7} 
m_1 T(r,f_2)=T(r,\phi)+O(1)=m(r,\phi)+O(1)&\leq n_2 m(r,F_2)+o(T(r,F_2))\\&=n_2 T(r,F_2)+o(T(r,F_2)).\nonumber
\end{align}

Consequently (\ref{lnm.6}) and (\ref{lnm.7}) yield
\begin{align*}
\left(n_2-2-\frac{n_2}{m_1}\right)T(r,F_2)\leq o(T(r,F_2)),
\end{align*}
which contradicts the fact that $m_1>\frac{n_2}{n_2-2}$.\par

\medskip
{\bf Sub-case 2.1.2.} Let $n_2<m_2$. Clearly (\ref{lm.5}) implies $n_2=1$ and $m_2>1$ and so from (\ref{pds1}), we get
\begin{align}\label{lnm.8} 
F_2(z)+G_2^{m_2}(z)=1.
\end{align}
Now using (\ref{lnm.2}) to (\ref{lnm.1}) and (\ref{lnm.8}), we get respectively
\begin{align}\label{lnm.9} 
F_1(z)+d_2f_2^{m_1}(z)=1\quad \text{and}\quad F_2(z)+d_1f_1^{m_2}(z)=1.
\end{align}
Using Lemmas \ref{L.1} and \ref{L.3} to (\ref{lnm.9}), we obtain
\begin{align}\label{lnm.10}
\parallel\;m_i T(r, f_i)=T\left(r, f_i^{m_i}\right)+O(1) =T\left(r,1-F_j\right)+O(1)
& =T(r,F_j)+O(1)\\
& =m(r,F_j)+O(1) \nonumber\\
& \leq \left(m\left(r, \frac{F_j}{f_j}\right)+m(r, f_j)\right)+O(1)\nonumber \\
& =T(r, f_j)+O(T(r, f_j)),\nonumber
\end{align}
where $i,j\in\{1,2\}$ such that $i\neq j$. From  (\ref{lnm.10}), one can easily deduce that
\begin{align*}
\parallel\;(m_1m_2-1)T(r,f_i)\leq o(T(r, f_i)),
\end{align*}
which is impossible, where $i\in\{1,2\}$. Therefore the system of Fermat-type partial differential-difference equations (\ref{pds1}) does not have any finite-order transcendental entire solutions.

\medskip
{\bf Sub-case 2.2.} Let $n_2=m_2$. Then (\ref{lm.5}) gives $n_2=m_2=2$ and so from (\ref{pds1}), we get
\begin{align}\label{lnm.11} 
F_2^2(z)+G_2^2(z)=1.
\end{align}

Now using Theorem D to (\ref{lnm.11}), we obtain
\begin{align}\label{lnm.12} 
F_2(z)=\cos h(z)
\end{align}
and
\begin{align}\label{lnm.13} 
G_2(z)=f_1(z+c)-f_2(z)=\sin h(z)
\end{align}
where $h:\mathbb{C}^n\to \mathbb{C}$ is a polynomial. Now proceeding in the same way as done in the proof of Sub-case 1.2, we conclude that one of the following cases holds.
\begin{enumerate}
\item[(i)]
\begin{align*}
f_2(z)&=z_1\cos h(z)+\phi(z_2,z_3,\ldots,z_n),\\
f_1(z+c)&=z_1\cos h(z)+\sin h(z)+\phi(z_2,z_3,\ldots,z_n).
\end{align*}
Here $h(z)$ is a polynomial in $\mathbb{C}^n$ such that $\frac{\partial h(z)}{\partial z_1}\equiv 0$ and
\begin{align*}
&-(z_1+2c_1)\cos h(z+2c)-\phi(z_2+2c_2,\ldots,z_n+2c_n)+
z_1\cos h(z)+\sin h(z)+\phi(z_2,\ldots,z_n)\\&=\pm \sqrt{2}\sin \frac{1}{2}h(z);
\end{align*}
\item[(ii)]
\begin{align*}
f_2(z)&=-\frac{1}{2}\sin h(z)+\psi(z_2,z_3,\ldots,z_n),\\
f_1(z+c)&=\frac{1}{2}\sin h(z)+\psi(z_2,z_3,\ldots,z_n).
\end{align*}
Here $h(z)$ is a polynomial in $\mathbb{C}^n$ such that $\frac{\partial h(z)}{\partial z_1}\equiv -2$ and
\begin{align*}
\frac{1}{2}\sin h(z+2c)-\psi(z_2+2c_2,\ldots,z_n+2c_n)+
\frac{1}{2}\sin h(z)+\psi(z_2,\ldots,z_n)=\pm \sqrt{2}\cos \frac{1}{2}h(z).
\end{align*}
\end{enumerate}

\medskip
{\bf Case 3.} Let $m_1<n_1$. In this case from (\ref{lm.5}), we obtain $m_1=1$ and $n_1>1$ and so from (\ref{pds1}), we get
\begin{align}\label{lnn.1} 
F_1^{n_1}(z)+G_1(z)=1.
\end{align}

We now consider following sub-cases.

\medskip
{\bf Sub-case 3.1.} Let $m_2\neq n_2$. In this case, $G_i(z)$ and $f_j(z)$ share $0$ CM and $\rho(G_i/f_j)<1$, where $i,j\in\{1,2\}$ such that $i\neq j$. Clearly (\ref{lnm.2}) holds. We consider the following sub-cases.

\medskip
{\bf Sub-case 3.1.1.} Let $n_2<m_2$ such that $n_1>2$ and $m_2>\frac{n_1}{n_1-2}$. Clearly (\ref{lm.5}) implies $n_2=1$ and $m_2>1$ and so from (\ref{pds1}), we get
\begin{align}\label{lnn.2}
F_2(z)+G_2^{m_2}(z)=1.
\end{align}

In view of (\ref{lnm.2}) and by a simple calculation, (\ref{lnn.1}) and (\ref{lnn.2}) give
\beas-n_1F_1^{n_1-1}(z)\frac{\partial F_1(z)}{\partial z_1}+d_1^{m_2}d_2f_1^{m_2}(z)=1.\eeas

Since $m_2>\frac{n_1}{n_1-2}$, proceeding in the same way as done in the proof of Sub-case 2.1.1, we get a contradiction.\par

\medskip
{\bf Sub-case 3.1.2.} Let $n_2>m_2$. Clearly (\ref{lm.5}) implies $n_2>1$ and $m_2=1$ and so from (\ref{pds1}), we get
\begin{align}\label{lnn.3} 
F_2^{n_2}(z)+G_2(z)=1.
\end{align}

Using (\ref{lnm.2}) to (\ref{lnn.1}) and (\ref{lnn.3}), we get
\begin{align}\label{lnn.4}
F_i^{n_i}(z)+d_jf_j(z)=1,
\end{align}
where $i,j\in\{1,2\}$ such that $i\neq j$.
Differentiating (\ref{lnn.4}) partially with respect to $z_1$, we get
\begin{align}\label{lnn.5} 
n_i F_i^{n_i-1}(z)\frac{\partial F_i(z)}{\partial z_1}+d_jF_j(z)=0,
\end{align}
where $i,j\in\{1,2\}$ such that $i\neq j$.
Clearly (\ref{lnn.4}) and (\ref{lnn.5}), we obtain
\begin{align}\label{lnn.6}
\left(-n_iF_i^{n_i-1}(z)\frac{\partial F_i(z)}{\partial z_1}\right)^{n_j}+d_id_j^{n_j}f_i(z)=1,
\end{align}
where $i,j\in\{1,2\}$ such that $i\neq j$.
Differentiating (\ref{lnn.6}) partially with respect to $z_1$, we get
\begin{align}\label{lnn.7} 
F_i^{n_1n_2-n_j-1}(z)\alpha_i(z)=-d_id_j^{n_j}F_i(z),
\end{align}
where
\begin{align}\label{lnn.8}
\alpha_i(z)= (-n_i)^{n_j}n_j\left(\frac{\partial F_i(z)}{\partial z_1}\right)^{n_j-1}\left[(n_i-1)\left(\frac{\partial F_i(z)}{\partial z_1}\right)^2+F_i(z)\frac{\partial^2 F_i(z)}{\partial z_1^2}\right]
\end{align}
for $i,j\in\{1,2\}$ such that $i\neq j$.
Clearly $\alpha_i\not\equiv 0$, otherwise from (\ref{lnn.7}), we get $F_i(z)\equiv 0$, which is impossible for $i=1,2$. Since $n_i>1$, we have $n_1n_2-n_i\geq 2$ for $i=1,2$. Therefore applying Lemma \ref{L.4} to (\ref{lnn.7}), we get $\parallel m(r,\alpha_i)=o(T(r,F_i))$ and so $\parallel T(r,\alpha_i)=o(T(r,F_i))$ for $i=1,2$.
Now consider the following sub-cases.\par

\medskip
{\bf Sub-case 3.1.2.1.} Let $n_1n_2-n_i>2$ for $i=1,2$. Now using Lemma \ref{L.4} to (\ref{lnn.7}), we get $\parallel m(r,F_i\alpha)=o(T(r,F_i))$ and so $\parallel T(r,F_i\alpha)=o(T(r,F_i))$ for $i=1,2$. Since $\parallel T(r,\alpha_i)=o(T(r,F_i))$, it follows that
\begin{align*}
\parallel\;T(r,F_i)\leq T(r,F_i\alpha_i)+T\left(r,\frac{1}{\alpha_i}\right)=T(r,F_i\alpha_i)+T(r,\alpha_i)=o(T(r,F_i)),
\end{align*}
which is impossible for $i=1,2$.\par

\medskip
{\bf Sub-case 3.1.2.2.} Let $n_1n_2-n_i=2$ for $i=1,2$. In this case, we must have $n_1=n_2=2$. Then from (\ref{lnn.7}) and (\ref{lnn.8}), we have respectively
\begin{align}\label{lnn.9} 
\alpha_i(z)=-d_id_j^2
\end{align}
for $i,j\in\{1,2\}$ such that $i\neq j$ and
\begin{align}\label{lnn.10}\alpha_i(z)=8\frac{\partial F_i(z)}{\partial z_1}\left[\left(\frac{\partial F_i(z)}{\partial z_1}\right)^2+F_i(z)\frac{\partial^2 F_i(z)}{\partial z_1^2}\right]=8\frac{\partial F_i(z)}{\partial z_1}\frac{\partial \left(F_i(z)\frac{\partial F_i(z)}{\partial z_1}\right)}{\partial z_1}
\end{align}
for $i=1,2$. Clearly (\ref{lnn.9}) and (\ref{lnn.10}) give
\begin{align}\label{lnn.11}
\alpha_i(z)=8\frac{\partial F_i(z)}{\partial z_1}\left[\left(\frac{\partial F_i(z)}{\partial z_1}\right)^2+F_i(z)\frac{\partial^2 F_i(z)}{\partial z_1^2}\right]=8\frac{\partial F_i(z)}{\partial z_1}\frac{\partial \left(F_i(z)\frac{\partial F_i(z)}{\partial z_1}\right)}{\partial z_1}=-d_id_j^2
\end{align}
for $i,j\in\{1,2\}$ such that $i\neq j$. Now differentiating (\ref{lnn.11}) partially with respect to $z_1$, we get
\begin{align}\label{lnn.12} 
4\left(\frac{\partial F_i(z)}{\partial z_1}\right)^2\frac{\partial^2 F_i(z)}{\partial z_1^2}+F_i(z)\left( \left(\frac{\partial^2 F_i(z)}{\partial z_1^2}\right)^2+\frac{\partial F_i(z)}{\partial z_1}\frac{\partial^3 F_i(z)}{\partial z_1^3}\right)=0
\end{align}
for $i=1,2$. Note that $\parallel T(r,\alpha_i)=o(T(r,F_i))$ for $i=1,2$. Therefore from (\ref{lnn.11}), it is easy to verify that
\begin{align}\label{lnn.13} 
\parallel\;N\left(r,0;\frac{\partial F_i(z)}{\partial z_1}\right)\leq N(r,0;\alpha_i)\leq T(r,\alpha_i)=o(T(r,F_i))
\end{align}
for $i=1,2$.
We consider the following two sub-cases.\par

\medskip
{\bf Sub-case 3.1.2.2.1.} Let $\frac{\partial^2 F_i(z)}{\partial z_1^2}\not\equiv 0$ for $i=1,2$. Now in view of the first main theorem and using Lemma \ref{L.1}, we get
\begin{align}\label{lnn.14} 
\parallel\;N\left(r,0;\frac{\partial^2 F_i(z)}{\partial z_1^2}\right)&\leq N\left(r,0;\frac{\frac{\partial^2 F_i(z)}{\partial z_1^2}}{\frac{\partial F_i(z)}{\partial z_1}}\right)+N\left(r,0;\frac{\partial F_i(z)}{\partial z_1}\right)\nonumber\\&\leq
T\left(r,\frac{\frac{\partial^2 F_i(z)}{\partial z_1^2}}{\frac{\partial F_i(z)}{\partial z_1}}\right)+N\left(r,0;\frac{\partial F_i(z)}{\partial z_1}\right)+O(1)\nonumber\\&\leq
N\left(r,\frac{\partial F_i(z)}{\partial z_1}\right)+2N\left(r,0;\frac{\partial F_i(z)}{\partial z_1}\right)+o(T(r,F_i))
\end{align}
for $i=1,2$. Since $F_i(z)$ is an entire function, from (\ref{lnn.13}) and (\ref{lnn.14}), we obtain
\begin{align}\label{lnn.15}
\parallel\;N\left(r,0;\frac{\partial^2 F_i(z)}{\partial z_1^2}\right)=o(T(r,F_i))
\end{align}
for $i=1,2$. Therefore using (\ref{lnn.13}) and (\ref{lnn.15}) to (\ref{lnn.12}), we deduce that
\begin{align}\label{lnn.16}
\parallel N(r,0;F_i)=o(T(r,F_i))
\end{align}
for $i=1,2$.
Now (\ref{lnn.11}) gives
\begin{align*}
\frac{\alpha_i(z)}{F_i^3(z)}=8\left(\frac{\frac{\partial F_i(z)}{\partial z_1}}{F_i(z)}\right)^3+8\frac{\frac{\partial F_i(z)}{\partial z_1}}{F_i(z)}\times\frac{\frac{\partial^2 F_i(z)}{\partial z_1^2}}{F_i(z)}
\end{align*}
and so by Lemma \ref{L.1}, we get
\begin{align}\label{lnn.17}
\parallel\;m\left(r,0;F_i(z)\right)=o(T(r,F_i))
\end{align}
for $i=1,2$. Now in view of the first main theorem and using (\ref{lnn.16}) and (\ref{lnn.17}), we obtain
$\parallel T(r,F_i)=o(T(r,F_i))$ for $i=1,2$, which is impossible.\par

\medskip
{\bf Sub-case 3.1.2.2.2.} Let $\frac{\partial^2 F_i(z)}{\partial z_1^2}\equiv 0$ for $i=1,2$. Now from (\ref{lnn.10}), we have
\begin{align}\label{lnn.18}\alpha_i(z)F_i(z)=4\frac{\partial \left(F_i(z)\frac{\partial F_i(z)}{\partial z_1}\right)^2}{\partial z_1}
\end{align}
for $i=1,2$. On the other hand, from (\ref{lnn.5}) and (\ref{lnn.18}), we obtain
\begin{align*}
F_i(z)=\frac{2d_j}{\alpha_i(z)}F_j(z)\frac{\partial F_j(z)}{\partial z_1}
\end{align*}
and so (\ref{lnn.5}) yields
\begin{align}\label{lnn.19} 
F_i(z)\frac{\partial F_i(z)}{\partial z_1}+\frac{d_id_j}{\alpha_j(z)}F_i(z)\frac{\partial F_i(z)}{\partial z_1}=0
\end{align}
for $i,j\in\{1,2\}$ such that $i\neq j$. Clearly from (\ref{lnn.19}) gives $\alpha_j(z)=-d_id_j$ for $i,j\in\{1,2\}$ such that $i\neq j$.
Therefore $\alpha_i(z)=\alpha_j(z)=-d_id_j$ and so from (\ref{lnn.9}), we get $d_1=d_2=1$. Consequently $\alpha_i(z)=-1$ for $i=1,2$.
Since $\frac{\partial^2 F_i(z)}{\partial z_1^2}\equiv 0$, equations (\ref{lnn.10})--(\ref{lnn.11}) reduce to $8(\partial F_i/\partial z_1)^3=-1$. Hence
\begin{align}\label{lnn.20} 
\frac{\partial F_i(z)}{\partial z_1}=\frac{K_i}{2},
\end{align}
where $K_i^3=-1$ for $i=1,2$. Clearly (\ref{lnn.20}) implies
\begin{align}\label{lnn.21} 
\frac{\partial f_i(z)}{\partial z_1}=F_i(z)=\frac{K_i}{2}z_1+g_i(z_2,\ldots,z_n),
\end{align}
where $g_i(z_2,z_3,\ldots, z_n)$ is a finite-order transcendental entire function for $i=1,2$.  Again from (\ref{lnn.21}), we have
\begin{align}\label{lnn.22} 
f_i(z)=\frac{K_i}{4}z_1^2 + z_1g_i(z_2,z_3,\ldots,z_n)+G_i(z_2,z_3,\ldots,z_n),
\end{align}
where $G_i(z_2,z_3,\ldots, z_n)$ is a finite-order transcendental entire function for $i=1,2$.
Using (\ref{lnn.20}), (\ref{lnn.21}) and (\ref{lnn.22}) to (\ref{lnn.6}), we get
\begin{align*} &K_i^2\left(\frac{K_i^2}{4}z_1^2+K_iz_1g_i(z_2,\ldots,z_n)+g_i^2(z_2,\ldots,z_n)\right)\\&+
\frac{K_i}{4}z_1^2 + z_1g_i(z_2,z_3,\ldots,z_n)+G_i(z_2,z_3,\ldots,z_n)=1,
\end{align*}
i.e., $G_i(z_2,z_3,\ldots,z_n)=1-K_i^2(g_i(z_2,z_3,\ldots,z_n))^2$ for $i=1,2$
and so from (\ref{lnn.22}), we have
\begin{align*}
 f_i(z)=1+\frac{K_i}{4}z_1^2 + z_1g_i(z_2,z_3,\ldots,z_n)-K_i^2g_i^2(z_2,z_3,\ldots,z_n),
 \end{align*}
where $K_i^3=-1$ and $g_i(z_2,z_3,\ldots, z_n)$ is a finite-order transcendental entire function. However, (\ref{lnm.2}) and $d_1=d_2=1$ give
\[
f_2(z+c)-f_1(z)=f_2(z).
\]
Comparing the coefficients of $z_1^2$ in this identity and using the preceding representation yields
\[
\frac{K_2-K_1}{4}=\frac{K_2}{4},
\]
so $K_1=0$, contradicting $K_1^3=-1$. Therefore this sub-case admits no finite-order transcendental entire solutions.\par

\medskip
{\bf Sub-case 3.2.} Let $m_2=n_2$. Then (\ref{lm.5}) gives $n_2=m_2=2$ and so from (\ref{pds1}), we get
\begin{align}\label{md.1} 
F_2^2(z)+G_2^2(z)=1.
\end{align}

Now using Theorem D to (\ref{md.1}), we arrive at (\ref{lnm.12}) and (\ref{lnm.13}).
Clearly (\ref{lnn.1}) gives
\begin{align*} 
n_1\left(\frac{\partial f_1(z)}{\partial z_1}\right)^{n_1-1}\frac{\partial^2 f_1(z)}{\partial z_1^2}+\frac{\partial f_2(z+c)}{\partial z_1}-\frac{\partial f_1(z)}{\partial z_1}=0
\end{align*}
and so from (\ref{md.1}), we get
\begin{align}\label{md.2}\left(\frac{\partial f_1(z)}{\partial z_1}-n_1\left(\frac{\partial f_1(z)}{\partial z_1}\right)^{n_1-1}\frac{\partial^2 f_1(z)}{\partial z_1^2}\right)^2+\left(f_1(z+2c)-f_2(z+c)\right)^2=1.
\end{align}

Again using (\ref{lnn.1}) to (\ref{md.2}), we obtain
\begin{align*}
\left(\frac{\partial f_1(z)}{\partial z_1}-n_1\left(\frac{\partial f_1(z)}{\partial z_1}\right)^{n_1-1}\frac{\partial^2 f_1(z)}{\partial z_1^2}\right)^2+\left(f_1(z+2c)+\left(\frac{\partial f_1(z)}{\partial z_1}\right)^{n_1}-f_1(z)-1\right)^2=1.
\end{align*}

Now proceeding in the same way as in the proof of Sub-case 1.1 we get a contradiction.

\end{proof}

\vspace{0.1in}
{\bf Compliance of Ethical Standards:}\par

{\bf Conflict of Interest.} The authors declare that there is no conflict of interest regarding the publication of this paper.\par

{\bf Funding.} The authors have not received any funds for this research.\par

{\bf Data availability statement.} Data sharing not applicable to this article as no data sets were generated or analysed during the current study.

\end{document}